
\documentclass[a4paper,oneside,10pt]{article}%
\usepackage[english]{babel}
\usepackage[T1]{fontenc}
\usepackage[a4paper,top=3cm,bottom=3cm,left=3cm,right=3cm,marginparwidth=1.75cm]%
{geometry}
\usepackage[colorlinks=true, allcolors=blue, hypertexnames=false]{hyperref}
\usepackage{empheq}
\usepackage{graphicx}
\usepackage[colorinlistoftodos]{todonotes}
\usepackage{amsmath}
\usepackage{mathtools}
\usepackage{upgreek}
\usepackage{amssymb}
\usepackage{amsfonts}
\usepackage{amsthm}
\usepackage{hyperref}
\usepackage{enumitem}
\usepackage{mathrsfs}
\usepackage{fontenc}
\usepackage{verbatim}
\usepackage{framed}
\usepackage{algorithm}
\usepackage{bbm}
\usepackage{algpseudocode}
\usepackage{appendix}
\usepackage{color}
\usepackage{multirow}%
\setcounter{MaxMatrixCols}{30}
\providecommand{\U}[1]{\protect\rule{.1in}{.1in}}

\pagenumbering{arabic}
\setlength{\textwidth}{165mm}
\setlength{\textheight}{220mm}
\headsep=15pt \topmargin=-5mm \oddsidemargin=-0.36cm
\evensidemargin=-0.36cm \raggedbottom
\newtheorem{theorem}{Theorem}[section]

\newtheorem{corollary}[theorem]{Corollary}

\newtheorem{assumption}[theorem]{Assumption}

\newtheorem{lemma}[theorem]{Lemma}

\newtheorem{proposition}[theorem]{Proposition}
\newtheorem{remark}[theorem]{Remark}

\numberwithin{equation}{section}

\begin{document}

\title{A Global Stochastic Maximum Principle for Forward-Backward Stochastic
Control Systems with Quadratic Generators}
\author{Mingshang Hu\thanks{Zhongtai Securities Institute for Financial Studies, Shandong University, Jinan, Shandong
250100, PR China. humingshang@sdu.edu.cn. Research supported by National Natural Science
Foundation of China (No. 11671231).}
\and Shaolin Ji\thanks{Zhongtai Securities Institute for Financial Studies,
Shandong University, Jinan, Shandong 250100, PR China. Email: jsl@sdu.edu.cn
(Corresponding author). Research supported by the National Natural Science
Foundation of China (No. 11971263; 11871458).}
\and Rundong Xu\thanks{Zhongtai Securities Institute for Financial Studies,
Shandong University, Jinan, Shandong 250100, PR China. Email:
rundong.xu@mail.sdu.edu.cn.}}
\maketitle

\textbf{Abstract}. We study a stochastic optimal control problem for
forward-backward control systems with quadratic generators. In order to
establish the first and second-order variational and adjoint equations, we
obtain a new estimate for one-dimensional linear BSDEs with unbounded
stochastic Lipschitz coefficients involving bounded mean oscillation
martingales (BMO-martingales for short) and prove the solvability for a class
of multi-dimensional BSDEs with this type. Finally, a new global stochastic
maximum principle is deduced.

{\textbf{Key words}. BMO-martingale; Forward-backward stochastic control
systems; Linear BSDEs with unbounded coefficients; Maximum principle; Quadratic BSDEs }

\textbf{AMS subject classifications.} 93E20, 60H10, 35K15

\addcontentsline{toc}{section}{\hspace*{1.8em}Abstract}

\section{Introduction}

It is well known that the maximum principle, namely, the necessary condition
for optimality, is an important tool in solving optimal control problems. In
1990, Peng \cite{Peng90} first obtained the global stochastic maximum
principle for the classical stochastic control systems with diffusion
coefficients including control variables. Since then, many researchers
investigate this kind of optimal control problems for various stochastic
systems (see \cite{Fuhrman-Hu06}, \cite{Fuhrman-Hu13}, \cite{Hu-Jin-Zhou12},
\cite{Peng93}, \cite{Tang98}, \cite{Tang03}, \cite{Tang-Li94} and references
therein). Recently, Hu \cite{Hu17} first introduced two adjoint equations to
obtain the global maximum principle for decoupled forward-backward stochastic
control systems and completely solved the open problem proposed by Peng in
\cite{Peng99}. Inspired by Hu \cite{Hu17}, Hu, Ji and Xue \cite{HuJiXue18}%
\ proposed a new method to obtain the first and second-order variational
equations which are essentially fully coupled forward-backward stochastic
differential equations (see \cite{HuYing-Peng95}, \cite{Ma-Yong-Protter},
\cite{Ma-WZZ}, \cite{Ma-ZZ}, \cite{Zhang17}), and derived the global maximum
principle for fully coupled forward-backward stochastic control systems.

In this paper, we study the following decoupled forward-backward stochastic
control system:%
\begin{equation}
\left\{
\begin{array}
[c]{rl}%
dX_{t}^{u}= & b(t,X_{t}^{u},u_{t})dt+\sigma(t,X_{t}^{u},u_{t})dW_{t},\\
dY_{t}^{u}= & -f(t,X_{t}^{u},Y_{t}^{u},Z_{t}^{u},u_{t})dt+\left(  Z_{t}%
^{u}\right)  ^{\intercal}dW_{t},\\
X_{0}^{u}= & x_{0},\ Y_{T}^{u}=\Phi(X_{T}^{u}),
\end{array}
\right.  \label{intro-FBSDE}%
\end{equation}
where the generator $f$ of the backward stochastic differential equation (BSDE
for short) in (\ref{intro-FBSDE}) has a quadratic growth in $z$ and the
control $u(\cdot)$ takes values in a nonempty subset of $\mathbb{R}^{k}$
($k\geq1$). Our aim is to minimize the cost functional $J(u(\cdot)):=Y_{0}%
^{u}$ and obtain the global stochastic maximum principle for
(\ref{intro-FBSDE}).

The solvability of BSDEs with quadratic generators itself is an important
research field. Kobylanski \cite{Kobylanski} first investigated the case of
bounded terminal values. After that, Briand and Hu (\cite{HuBSDEquad06},
\cite{HuBSDEquad08}) studied quadratic BSDEs with unbounded terminal values
and convex generators. Recently, numerous progress has been made (see
\cite{Bahlali}, \cite{Barr-ElKar}, \cite{Hu-Tang2018}, \cite{Tevzadze},
\cite{Xing18}) in this issue. The stochastic optimal control problems
involving BSDEs with quadratic generators have a wide range of applications in
the field of control and finance. When
\[
f(t,X_{t}^{u},Y_{t}^{u},Z_{t}^{u},u_{t})=\frac{\gamma}{2}\left\vert Z_{t}%
^{u}\right\vert ^{2}+g(t,X_{t}^{u},u_{t}),
\]
(\ref{intro-FBSDE}) becomes a risk-sensitive control problem with the
risk-sensitive parameter $\gamma$ investigated by Lim and Zhou in
\cite{Lim-Zhou} (see also \cite{Whittle90}). On the other hand,
(\ref{intro-FBSDE}) is closely related to the exponential utility maximization
problems (see \cite{Delbaen02}, \cite{El-Rouge}, \cite{Hu-Im-Mull},
\cite{Morlais}).

The backward state equation with a quadratic generator in (\ref{intro-FBSDE})
has caused great difficulty to explore the global stochastic maximum principle
for (\ref{intro-FBSDE}). Firstly, the quadratic growth in $z$ leads to that
the first and second-order variational equations are linear BSDEs with
unbounded stochastic Lipschitz coefficients involving BMO-martingales which
has been studied by Briand and Confortola \cite{Briand-SL-BSDE}. To obtain the
global stochastic maximum principle, roughly speaking, the key step is to
prove $\mathbb{E}\left[  \left(  \int_{t}^{t+\varepsilon}\left\vert \bar
{Z}_{s}\right\vert ds\right)  ^{\beta}\right]  =O\left(  \varepsilon^{\beta
}\right)  $ for $\varepsilon>0$ small enough, where $\bar{Z}(\cdot)$ is the
optimal state trajectory. In our context, we only know that $\mathbb{E}\left[
\left(  \int_{0}^{T}\left\vert \bar{Z}_{s}\right\vert ^{2}ds\right)
^{p}\right]  <\infty$ for any $p>0$. So we need $\beta\leq2$ to obtain the
above desired estimate. However, the estimate in \cite{Briand-SL-BSDE} cannot
guarantee $\beta\leq2$. To overcome this difficulty, we establish a new
estimate (see Proposition \ref{est-exp-yz-prop}) for linear BSDEs with
unbounded stochastic Lipschitz coefficients involving BMO-martingales. Based
on this estimate, we can deduce the first and second-order variational
equations of (\ref{intro-FBSDE}).

Secondly, for the multi-dimensional case, the first and second-order adjoint
equations are multi-dimensional linear BSDEs with unbounded stochastic
Lipschitz coefficients involving BMO-martingales which have been investigated
by Delbaen and Tang \cite{Delbaen-Tang2010}. In their proof, the key point to
obtain the solvability of BSDE is that the solution of the following
matrix-valued homogenous SDE:
\[
X_{t}=I_{n}+\sum_{k=1}^{d}\int_{0}^{t}A_{s}^{k}X_{s}dW_{s}^{k},\text{ }%
t\in\lbrack0,T]
\]
satisfies the reverse H\"{o}lder inequality by assuming that $A_{{}}^{k}\cdot
W^{k}\in\mathcal{\bar{H}}_{\infty}^{BMO}$ for $k=1,2,\ldots,d$, where the
space $\mathcal{\bar{H}}_{\infty}^{BMO}$ is the closure of the space
$\mathcal{H}_{\infty}$ (the set of all martingales with essentially bounded
quadratic variations) under the BMO-norm. However, due to\ Theorem 2.8 in
\cite{Delbaen-Tang2010}, the adjoint equations in our paper may not satisfy
the assumption in \cite{Delbaen-Tang2010}. Fortunately, we find both the first
and second-order adjoint equations belong to a class of specially structured
linear BSDEs (see Proposition \ref{prop-multi-linear-BSDE}) with unbounded
stochastic Lipschitz coefficients involving BMO-martingales. Then we obtain
the reverse H\"{o}lder inequality for the associated homogenous SDEs (see
Proposition \ref{prop-multi-reverseHolder}) which leads to the existence and
uniqueness of adjoint equations.

The rest of the paper is organized as follows. In section 2, preliminaries and
formulation of our problem are given. In section 3, we first establish a new
estimate for linear BSDEs with stochastic Lipschitz coefficients involving
BMO-martingales, and obtain the solvability of a class of multi-dimensional
special structured linear BSDEs with unbounded stochastic Lipschitz
coefficients involving BMO-martingales. Based on these results, the
variational equations and the adjoint equations are obtained, and a global
stochastic maximum principle is derived by spike variation method.

\section{Preliminaries and problem formulation}

Let $T\in(0,+\infty)$ be fixed and $(\Omega,\mathcal{F},\mathbb{P})$ be a complete
probability space on which a standard $d$-dimensional Brownian motion
$W=(W_{t}^{1},W_{t}^{2},\ldots,W_{t}^{d})_{t\in\lbrack0,T]}^{\intercal}$ is
defined. $\mathbb{F}:\mathbb{=}\left\{  \mathcal{F}_{t},0\leq t\leq T\right\}
$ is the $\mathbb{P}$-augmentation of the natural filtration of $W$. Denote by
$\mathbb{R}^{n}$ the $n$-dimensional real Euclidean space, $\mathbb{R}%
^{k\times n}$ the set of all $k\times n$ real matrices and $\mathbb{S}%
^{n\times n}$ the set of all $n\times n$ real symmetric matrices. The scalar
product (resp. norm) of $A=(a_{ij})$, $B=(b_{ij})\in\mathbb{R}^{k\times n}$ is
denoted by $\left\langle A,B\right\rangle =\mathrm{tr}\{AB^{\intercal}\}$
(resp. $\left\vert A\right\vert =\sqrt{\mathrm{tr}\left\{  AA^{\intercal
}\right\}  }$), where the superscript $^{\intercal}$ denotes the transpose of
vectors or matrices.

For any given $p,q\geq1$, we introduce the following spaces.

$L_{\mathcal{F}_{T}}^{p}(\Omega;\mathbb{R}^{n})$: the space of $\mathcal{F}%
_{T}$-measurable $\mathbb{R}^{n}$-valued random variables $\eta$ such that
$\mathbb{E}\left[  \left\vert \eta\right\vert ^{p}\right]  <\infty$.

$L_{\mathcal{F}_{T}}^{\infty}(\Omega;\mathbb{R}^{n})$: the space of
$\mathcal{F}_{T}$-measurable $\mathbb{R}^{n}$-valued random variables $\eta$
such that $\underset{\omega\in\Omega}{\mathrm{ess~sup}}\left\vert \eta\left(
\omega\right)  \right\vert <\infty$.

$\mathcal{M}_{\mathcal{F}}^{p,q}([0,T];\mathbb{R}^{n})$: the space of
$\mathbb{F}$-adapted processes $\varphi(\cdot)$ on $[0,T]$ such that
\[%
\begin{array}
[c]{rl}%
\left\Vert \varphi(\cdot)\right\Vert _{p,q}= & \left(  \mathbb{E}\left[
\left(  \int_{0}^{T}\left\vert \varphi_{t}\right\vert ^{p}dt\right)
^{\frac{q}{p}}\right]  \right)  ^{\frac{1}{q}}<\infty.
\end{array}
\]
In particular, we denote by $\mathcal{M}_{\mathcal{F}}^{p}([0,T];\mathbb{R}%
^{n})$ the above space when $p=q$.

$L_{\mathcal{F}}^{\infty}([0,T];\mathbb{R}^{n})$: the space of $\mathbb{F}%
$-adapted processes $\varphi(\cdot)$ on $[0,T]$ such that
\[
\left\Vert \varphi(\cdot)\right\Vert _{\infty}=\underset{(t,\omega)\in
\lbrack0,T]\times\Omega}{\mathrm{ess~sup}}\left\vert \varphi_{t}\left(
\omega\right)  \right\vert <\infty.
\]

$\mathcal{S}_{\mathcal{F}}^{p}([0,T];\mathbb{R}^{n})$: the space of continuous
processes $\varphi(\cdot)\in$ $\mathcal{M}_{\mathcal{F}}^{p}([0,T];\mathbb{R}%
^{n})$ such that $\mathbb{E}\left[  \sup\limits_{t\in\lbrack0,T]}\left\vert \varphi
_{t}\right\vert ^{p}\right]  <\infty$.

$BMO_{p}$: the space of $\mathbb{F}$-adapted and real-valued martingales $M$
such that%
\begin{equation}
\left\Vert M\right\Vert _{BMO_{p}}:=\sup_{\tau}\left\Vert \left(
\mathbb{E}\left[  \left\vert M_{T}-M_{\tau}\right\vert ^{p}\mid\mathcal{F}%
_{\tau}\right]  \right)  ^{\frac{1}{p}}\right\Vert _{\infty}<\infty,\text{
\ }p\in\lbrack1,+\infty), \label{def-BMOp}%
\end{equation}
where the supremum is taken over all stopping times $\tau\in\mathcal{[}%
0,T\mathcal{]}$.

\subsection{Some properties for BMO-martingales}

Here some properties and notations for BMO-martingales are listed. We refer
the readers to \cite{HWY}, \cite{Kazamaki} and the references therein for more details.

\begin{enumerate}
\item Let $p\in(1,+\infty)$. Then, there is a positive constant $C_{p}$ such
that, for any $\mathbb{F}$-adapted and real-valued martingales $M$, we have%
\[
\left\Vert M\right\Vert _{_{BMO_{1}}}\leq\left\Vert M\right\Vert _{BMO_{p}%
}\leq C_{p}\left\Vert M\right\Vert _{_{BMO_{1}}}.
\]
Thus, for any $p\geq1$, we write simply $BMO$ for $BMO_{p}$.

\item The energy inequality: If $M\in BMO,$ then, for any positive integer
$n$,%
\begin{equation}
\mathbb{E}[\left\langle M\right\rangle _{T}^{n}]\leq n!\left\Vert M\right\Vert
_{BMO_{2}}^{2n}. \label{energy-ineq}%
\end{equation}

\begin{remark}
Applying H\"{o}lder's inequality to (\ref{energy-ineq}), for any
$p\in(0,+\infty)$, we have%
\[
\mathbb{E}\left[  \left\langle M\right\rangle _{T}^{p}\right]  \leq\left(
\left(  \left[  p\right]  +1\right)  !\left\Vert M\right\Vert _{BMO_{2}%
}^{2\left(  \left[  p\right]  +1\right)  }\right)  ^{\frac{p}{\left[
p\right]  +1}}.
\]

\end{remark}

\item Denote by $\mathcal{E}\left(  M\right)  $ the Dol\'{e}ans-Dade exponential of
a continuous local martingale $M$, that is, $\mathcal{E}\left(  M_{t}\right)
=\exp\left\{  M_{t}-\frac{1}{2}\left\langle M\right\rangle _{t}\right\}  $ for
any $t\in\lbrack0,T].$ If $M\in BMO$, then $\mathcal{E}(M)$ is a uniformly
integrable martingale.

\item The John-Nirenberg inequality: Let $M\in BMO$. If $\delta\in\left(
0,\left\Vert M\right\Vert _{BMO_{2}}^{-2}\right)  $, then we have
\begin{equation}
\mathbb{E}\left[  \exp\left\{  \delta([\left\langle M\right\rangle
_{T}-\left\langle M\right\rangle _{\tau})\right\}  \mid\mathcal{F}_{\tau
}\right]  \leq\left(  1-\delta\left\Vert M\right\Vert _{BMO_{2}}^{2}\right)
^{-1} \label{J-N ineq}%
\end{equation}
for all stopping time $\tau\in\mathcal{[}0,T\mathcal{]}$.

\item The reverse H\"{o}lder inequality:\ Let $M\in BMO$. Denote by $p_{M}%
^{{}}$\ the positive constant linked to a BMO-martingale $M$ such that
$\Psi(p_{M}^{{}})=\left\Vert M\right\Vert _{BMO_{2}}$, where the  monotonically decreasing function%
\begin{equation}
\Psi(x)=\sqrt{1+\frac{1}{x^{2}}\ln\frac{2x-1}{2(x-1)}}-1,\text{ \ \ \ }%
x\in(1,+\infty). \label{Rp-cd-func}%
\end{equation}
We make the convention that $\Psi(+\infty):=\lim\limits_{x\rightarrow+\infty
}\Psi(x)=0$ and $p_{M}^{{}}=+\infty$ when $\left\Vert M\right\Vert _{BMO_{2}%
}=0$. One can check that $p_{M}^{{}}$ is uniquely determined. If $p\in\left(
1,p_{M}^{{}}\right)  $, then, for any stopping time $\tau\in\mathcal{[}%
0,T\mathcal{]}$,
\begin{equation}
\mathbb{E}[\left(  \mathbb{\mathcal{E}}\left(  M_{T}\right)  \right)
^{p}/\left(  \mathbb{\mathcal{E}}\left(  M_{\tau}\right)  \right)  ^{p}%
\mid\mathcal{F}_{\tau}]\leq K\left(  p,\left\Vert M\right\Vert _{BMO_{2}%
}\right)  ,\text{ \ }\mathbb{P}-a.s., \label{Rp-cd}%
\end{equation}
where
\begin{equation}%
\begin{array}
[c]{rl}
& K\left(  p,\left\Vert M\right\Vert _{BMO_{2}}\right)  \\
= & 2\left(  1-\dfrac{2p-2}{2p-1}\exp\left\{  p^{2}\left[  \left\Vert
M\right\Vert _{BMO_{2}}^{2}+2\left\Vert M\right\Vert _{BMO_{2}}\right]
\right\}  \right)  ^{-1}.
\end{array}
\label{Rp-const}%
\end{equation}

\item $p_{M}^{\ast}$: the conjugate exponent of $p_{M}^{{}}$, that is,
$\left(  p_{M}^{{}}\right)  ^{-1}+\left(  p_{M}^{\ast}\right)  ^{-1}=1$.

$H\cdot W$: $H$ is an $\mathbb{F}$-adapted process and $H\cdot W$ is the
stochastic integral of $H$ with respect to $W$. If $H\cdot W\in BMO$, then we
write simply $p_{H}^{{}}$ for $p_{H\cdot W}^{{}}$ and $p_{H}^{\ast}$ for
$p_{H\cdot W}^{\ast}$ without ambiguity.
\end{enumerate}

\subsection{Problem formulation}

Consider the following forward-backward stochastic control system:
\begin{equation}
\left\{
\begin{array}
[c]{rl}%
dX_{t}^{u}= & b(t,X_{t}^{u},u_{t})dt+\sigma(t,X_{t}^{u},u_{t})dW_{t},\\
dY_{t}^{u}= & -f(t,X_{t}^{u},Y_{t}^{u},Z_{t}^{u},u_{t})dt+\left(  Z_{t}%
^{u}\right)  ^{\intercal}dW_{t},\\
X_{0}^{u}= & x_{0},\ Y_{T}^{u}=\Phi(X_{T}^{u}),
\end{array}
\right.  \label{state-eq}%
\end{equation}
where $b:[0,T]\times\mathbb{R}^{n}\times U\longmapsto\mathbb{R}^{n}$,
$\sigma:[0,T]\times\mathbb{R}^{n}\times U\longmapsto\mathbb{R}^{n\times d}$,
$f:[0,T]\times\mathbb{R}^{n}\times\mathbb{R}\times\mathbb{R}^{d}\times
U\longmapsto\mathbb{R}$, $\Phi:\mathbb{R}^{n}\longmapsto\mathbb{R}$, $x_{0} \in \mathbb{R}^{n}$ and
the control domain $U$ is a nonempty subset of $\mathbb{R}^{k}$. Set
\[%
\begin{array}
[c]{l}%
b(\cdot)=\left(  b^{1}(\cdot),b^{2}(\cdot),\ldots,b_{{}}^{n}(\cdot)\right)
^{\intercal}\in\mathbb{R}^{n},\\
\sigma(\cdot)=\left(  \sigma^{1}(\cdot),\sigma^{2}(\cdot),\ldots,\sigma
^{d}(\cdot)\right)  \in\mathbb{R}^{n\times d},\\
\sigma^{i}(\cdot)=\left(  \sigma^{1i}(\cdot),\sigma^{2i}(\cdot),\ldots
,\sigma_{{}}^{ni}(\cdot)\right)  ^{\intercal}\in\mathbb{R}^{n}\text{ for
}i=1,2,\ldots,d.
\end{array}
\]
An admissible control $u(\cdot)$ is an $\mathbb{F}$-adapted process with
values in $U$ such that%
\begin{equation}
\mathbb{E}\left[  \int_{0}^{T}\left\vert u_{t}\right\vert ^{p}dt\right]
<\infty\text{ for any }p>0\text{.} \label{control-integrable}%
\end{equation}
Denote by $\mathcal{U}[0,T]$ the admissible control set. We impose the
following assumptions on the coefficients of the control system
(\ref{state-eq}).

\begin{assumption}
\label{assum-2} (i) $\Phi$, $\Phi_{x}$, $\Phi_{xx}$ are continuous and bounded.

(ii) For $\psi=b$ and $\sigma$, $\psi$, $\psi_{x}$ and $\psi_{xx}$ are
continuous in $(x,u)$; $\psi_{x}$ and $\psi_{xx}$ are bounded; there exists a
constant $L_{1}>0$ such that%
\[
\left\vert b(t,0,u)\right\vert +\left\vert \sigma(t,0,u)\right\vert \leq
L_{1}(1+|u|).
\]

(iii) $f$, $f_{x}$, $f_{y}$, $f_{z}$, $f_{xx}$, $f_{xy}$, $f_{yy}$ , $f_{xz}$,
$f_{yz}$, $f_{zz}$ are continuous in $(x,y,z,u)$; $f_{x}$, $f_{y}$, $f_{xx}$,
$f_{xy}$, $f_{yy}$ , $f_{xz}$, $f_{yz}$ and $f_{zz}$ are bounded; there exist
positive constants $\alpha$, $\gamma$,$\ L_{2}$ and $L_{3}$ such that%
\[%
\begin{array}
[c]{l}%
\left\vert f(t,x,0,0,u)\right\vert \leq\alpha,\\
\left\vert f(t,x,y,z,u_{1})-f(t,x,y,z,u_{2})\right\vert \leq L_{2}%
(1+\left\vert y\right\vert +\left\vert z\right\vert ),\\
\left\vert f_{z}(t,x,y,z,u)\right\vert \leq L_{3}+\gamma\left\vert
z\right\vert .
\end{array}
\]

\end{assumption}

From the existence result (Proposition 3) in \cite{HuBSDEquad08} and the
uniqueness result (Lemma 2.1) in \cite{Hu-Tang2018}, we have the following result:

\begin{theorem}
\label{state-eq-exist-th} Let Assumption \ref{assum-2} hold. Then, for any
$u(\cdot)\in\mathcal{U}[0,T]$ and $p>1$, the state equation (\ref{state-eq})
admits a unique solution $(X_{{}}^{u}\left(  \cdot\right)  ,Y_{{}}^{u}\left(
\cdot\right)  ,Z_{{}}^{u}\left(  \cdot\right)  )\in\mathcal{S}_{\mathcal{F}%
}^{p}([0,T];\mathbb{R}^{n})\times L_{\mathcal{F}}^{\infty}([0,T];\mathbb{R}%
)\times$ $\mathcal{M}_{\mathcal{F}}^{2,p}([0,T];\mathbb{R}^{d})$ such that
$Z_{{}}^{u}\cdot W\in BMO$. Furthermore, we have the following estimate:%
\begin{equation}
\left\{
\begin{array}
[c]{l}%
\begin{array}
[c]{rl}%
\left\Vert X_{{}}^{u}\right\Vert _{\mathcal{S}^{p}}^{p}\leq & C_{1}\left\{
\left\vert x_{0}\right\vert ^{p}+\mathbb{E}\left[  \left(  \int_{0}%
^{T}\left\vert b(s,0,u_{s})\right\vert ds\right)  ^{p}\right.  \right.  \\
& \ \ \ \ \ \ \ \ +\left.  \left.  \left(  \int_{0}^{T}\left\vert
\sigma(s,0,u_{s})\right\vert ^{2}ds\right)  ^{\frac{p}{2}}\right]  \right\}  ,
\end{array}
\\
\left\Vert Y_{{}}^{u}\right\Vert _{\infty}+\left\Vert Z_{{}}^{u}\cdot
W\right\Vert _{BMO_{2}}\leq C_{2},
\end{array}
\right.  \label{est-quad-qfbsde}%
\end{equation}
where $C_{1}$ depends on $p$, $T$, $\left\Vert b_{x}\right\Vert _{\infty}$ and
$\left\Vert \sigma_{x}\right\Vert _{\infty}$, and $C_{2}$ depends on $\alpha,$
$\gamma,$ $T,$ $L_{3},$ $\left\Vert \Phi\right\Vert _{\infty}$ and $\left\Vert
f_{y}\right\Vert _{\infty}$.
\end{theorem}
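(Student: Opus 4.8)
The plan is to exploit the decoupled structure of \eqref{state-eq}, solving the forward SDE first and then the backward equation along the resulting trajectory. First I would establish well-posedness and the $\mathcal{S}^p$-estimate for $X^u$. Writing $b(t,x,u)=b(t,0,u)+[b(t,x,u)-b(t,0,u)]$ and likewise for $\sigma$, Assumption \ref{assum-2}(ii) gives Lipschitz continuity in $x$ (from bounded $b_x,\sigma_x$) together with the linear-growth-in-$u$ bounds on $b(t,0,u),\sigma(t,0,u)$. A standard Picard iteration yields a unique $X^u\in\mathcal{S}_{\mathcal F}^p([0,T];\mathbb{R}^n)$, and a routine Burkholder--Davis--Gundy plus Gr\"onwall argument produces the first line of \eqref{est-quad-qfbsde} with $C_1$ depending only on $p,T,\|b_x\|_\infty,\|\sigma_x\|_\infty$. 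Here the control enters solely through the inhomogeneous terms $b(t,0,u),\sigma(t,0,u)$, whose moments of every order are finite by \eqref{control-integrable}; thus $X^u$ may be unbounded but has finite moments of all orders.

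Second, given $X^u$ I would read the backward equation as a scalar quadratic BSDE with effective generator $g(t,\omega,y,z):=f(t,X_t^u(\omega),y,z,u_t(\omega))$ and terminal value $\xi:=\Phi(X_T^u)$. The key observation is that $\xi\in L_{\mathcal{F}_T}^\infty$ since $\Phi$ is bounded, and that $g$ inherits from Assumption \ref{assum-2}(iii) structural bounds that are uniform in $x$ and $u$: namely $|g(t,0,0)|\le\alpha$, Lipschitz continuity in $y$ with constant $\|f_y\|_\infty$, and the quadratic bound $|g_z|\le L_3+\gamma|z|$, whose integration yields $|g(t,y,z)|\le\alpha+\|f_y\|_\infty|y|+L_3|z|+\frac{\gamma}{2}|z|^2$. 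Consequently the possibly unbounded $X^u$ never enters through the growth data of $g$, placing the equation squarely in the scope of the cited quadratic-BSDE theory: existence of a bounded solution follows from Proposition~3 of \cite{HuBSDEquad08} and uniqueness from Lemma~2.1 of \cite{Hu-Tang2018}, giving $Y^u\in L_{\mathcal F}^\infty([0,T];\mathbb{R})$.

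Third, I would derive the two a priori bounds in the second line of \eqref{est-quad-qfbsde}. The $L^\infty$-bound on $Y^u$ comes from a comparison with a deterministic function after an exponential change of variable, yielding $\|Y^u\|_\infty\le C_2'$ depending on $\alpha,\gamma,T,L_3,\|\Phi\|_\infty,\|f_y\|_\infty$. For the BMO-bound I would apply It\^o's formula to $e^{2\gamma Y_t^u}$; using the growth bound on $g$ and Young's inequality to absorb the linear term $L_3|z|$ into the quadratic one, then taking conditional expectation at an arbitrary stopping time $\tau$ and exploiting $e^{-2\gamma\|Y^u\|_\infty}\le e^{2\gamma Y^u}\le e^{2\gamma\|Y^u\|_\infty}$, I would obtain $\mathbb{E}[\int_\tau^T|Z_s^u|^2\,ds\mid\mathcal{F}_\tau]\le C$ uniformly in $\tau$, i.e. $\|Z^u\cdot W\|_{BMO_2}\le C_2$. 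Finally the energy inequality \eqref{energy-ineq} converts this into $\mathbb{E}[(\int_0^T|Z_s^u|^2\,ds)^p]<\infty$ for every $p$, placing $Z^u\in\mathcal{M}_{\mathcal F}^{2,p}([0,T];\mathbb{R}^d)$ and completing the stated regularity.

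I expect the main obstacle to be the BMO-estimate on $Z^u\cdot W$ and, in particular, closing the exponential computation rigorously: the local martingale $\int e^{2\gamma Y_s^u}(Z_s^u)^\intercal dW_s$ must be shown to be a true martingale by localization (using $Y^u\in L^\infty$ and $Z^u\in\mathcal{M}_{\mathcal F}^2$) before conditional expectations are taken, and the linear term $L_3|z|$ stemming from $f_z$ must be handled so that the quadratic term is not entirely cancelled. A secondary point to verify is that the structural bounds on $g$ genuinely hold uniformly along the unbounded trajectory $X^u$; this uniformity, guaranteed by $|f(t,x,0,0,u)|\le\alpha$ being independent of $x$, is precisely what renders the quadratic-BSDE machinery applicable despite the control acting through the forward dynamics.
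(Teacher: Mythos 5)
Your proposal is correct and follows essentially the same route as the paper: the paper treats the system as decoupled, handles the forward SDE by standard arguments, and obtains existence of the bounded solution of the quadratic BSDE from Proposition~3 of \cite{HuBSDEquad08} and uniqueness from Lemma~2.1 of \cite{Hu-Tang2018} — exactly the two citations your plan invokes. The details you supply (the uniform-in-$x$ structural bounds on the generator, the exponential-transform argument for the $L^\infty$ and BMO estimates) are the standard content behind those cited results, so there is no substantive divergence.
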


Our optimal control problem is to minimize the cost functional $J(u(\cdot
)):=Y_{0}^{u}$ over $\mathcal{U}[0,T]$:%
\begin{equation}
\underset{u(\cdot)\in\mathcal{U}[0,T]}{\inf}J(u(\cdot)). \label{obje-eq}%
\end{equation}

\section{Stochastic maximum principle}

We derive the maximum principle for the optimization problem (\ref{obje-eq})
in this section. For the simplicity of presentation, the constant $C$ will change
from line to line in our proof.

\subsection{Linear BSDEs with unbounded coefficients}

We first consider the following one-dimensional linear BSDE with unbounded
coefficients:%
\begin{equation}
Y_{t}=\xi+\int_{t}^{T}\left(  \lambda_{s}Y_{s}+\mu_{s}^{\intercal}%
Z_{s}+\varphi_{s}\right)  ds-\int_{t}^{T}Z_{s}^{\intercal}dW_{s},
\label{linear-bsde}%
\end{equation}
where the Brownian motion is $d$-dimensional.

The following result is a direct application of Corollary 9 and Theorem 10
in \cite{Briand-SL-BSDE} to the BSDE (\ref{linear-bsde}).

\begin{proposition}
\label{est-linear-prop} Suppose that $\left(  \xi,\varphi\right)  \in
\bigcap_{\beta>1}\left(  L_{\mathcal{F}_{T}}^{\beta}(\Omega;\mathbb{R}%
)\times\mathcal{M}_{\mathcal{F}}^{1,\beta}([0,T];\mathbb{R})\right)  $,
$\left\Vert \lambda\right\Vert _{\infty}<\infty$ and $\mu\cdot W\in BMO$.
Then, there exists a unique solution $(Y,Z)\in\mathcal{S}_{\mathcal{F}}%
^{\beta}([0,T];\mathbb{R})\times\mathcal{M}_{\mathcal{F}}^{2,\beta
}([0,T];\mathbb{R}^{d})$ to the BSDE (\ref{linear-bsde}) for all $\beta>1$.
Moreover, for any $\beta_{0}>p_{\mu}^{\ast}$ and $\beta\in\left(  p_{\mu
}^{\ast},\beta_{0}\right)  $, we have
\begin{equation}%
\begin{array}
[c]{rl}%
\mathbb{E}\left[  \sup\limits_{t\in\lbrack0,T]}\left\vert Y_{t}\right\vert
^{\beta}+\left(  \int_{0}^{T}\left\vert Z_{t}\right\vert ^{2}dt\right)
^{\frac{\beta}{2}}\right]  \leq & C\left(  \mathbb{E}\left[  \left\vert
\xi\right\vert ^{\beta_{0}}+\left(  \int_{0}^{T}\left\vert \varphi
_{t}\right\vert dt\right)  ^{\beta_{0}}\right]  \right)  ^{\frac{\beta}%
{\beta_{0}}},
\end{array}
\label{est-linear-data}%
\end{equation}
where $C\ $is a constant depending on\ $\beta,$ $\beta_{0}$, $T$, $\left\Vert \lambda\right\Vert _{\infty}$ and $\left\Vert
\mu\cdot W\right\Vert _{BMO_{2}}$, and increasing with respect to $\left\Vert
\mu\cdot W\right\Vert _{BMO_{2}}$.
\end{proposition}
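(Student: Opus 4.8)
The plan is to recast (\ref{linear-bsde}) as a BSDE with a stochastic Lipschitz generator in the sense of Briand and Confortola and then invoke their existence, uniqueness and a priori estimate directly. Writing the driver as $g(s,y,z)=\lambda_s y+\mu_s^\intercal z+\varphi_s$, one sees that $g$ is affine in $(y,z)$ with
\[
|g(s,y,z)-g(s,y',z')|\le \|\lambda\|_\infty|y-y'|+|\mu_s|\,|z-z'|,
\]
so that the $y$-Lipschitz coefficient is the constant $\|\lambda\|_\infty$ while the $z$-Lipschitz coefficient is the (unbounded) process $|\mu_s|$. The decisive structural fact is that $\int_0^\cdot\mu_s\,dW_s=\mu\cdot W\in BMO$; hence the reverse H\"older exponent $p_\mu$ of $\mu\cdot W$ is well defined through $\Psi(p_\mu)=\|\mu\cdot W\|_{BMO_2}$ as in (\ref{Rp-cd-func}), and its conjugate $p_\mu^\ast$ is exactly the integrability threshold appearing in the statement. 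Because $\lambda$ is bounded, the weight generated by the $y$-coefficient contributes only a factor of order $e^{\|\lambda\|_\infty T}$, so the weighted solution spaces of \cite{Briand-SL-BSDE} coincide, up to multiplicative constants, with the plain spaces $\mathcal{S}_{\mathcal{F}}^{\beta}([0,T];\mathbb{R})$ and $\mathcal{M}_{\mathcal{F}}^{2,\beta}([0,T];\mathbb{R}^d)$.

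For existence and uniqueness I would apply Theorem 10 of \cite{Briand-SL-BSDE}: the data condition $(\xi,\varphi)\in L_{\mathcal{F}_T}^{\beta_0}(\Omega;\mathbb{R})\times\mathcal{M}_{\mathcal{F}}^{1,\beta_0}([0,T];\mathbb{R})$ for every $\beta_0>1$, together with the $BMO$ property of $\mu\cdot W$, are precisely the hypotheses required to produce a unique $(Y,Z)$ in the corresponding weighted spaces for each $\beta>p_\mu^\ast$, which by the previous paragraph lies in $\mathcal{S}_{\mathcal{F}}^{\beta}\times\mathcal{M}_{\mathcal{F}}^{2,\beta}$. Uniqueness can alternatively be read off the affine structure: the difference of two solutions solves (\ref{linear-bsde}) with zero data and is annihilated by the a priori estimate below. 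It is instructive to keep in mind the explicit representation obtained after the Girsanov change of measure $d\mathbb{Q}/d\mathbb{P}=\mathcal{E}((\mu\cdot W)_T)$, which is a genuine (uniformly integrable) density since $\mu\cdot W\in BMO$; under $\mathbb{Q}$ the equation loses its $z$-term and yields
\[
Y_t=\mathbb{E}^{\mathbb{Q}}\!\left[e^{\int_t^T\lambda_s\,ds}\,\xi+\int_t^T e^{\int_t^s\lambda_r\,dr}\varphi_s\,ds\ \Big|\ \mathcal{F}_t\right],
\]
which makes transparent how the estimate is built and why the integrability of the change of measure is the crux.

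The estimate (\ref{est-linear-data}) is Corollary 9 of \cite{Briand-SL-BSDE} specialized to our coefficients. Its engine is It\^o's formula applied to $e^{\rho A_t}|Y_t|^{\beta}$ with weight $A_t=\|\lambda\|_\infty t+\langle\mu\cdot W\rangle_t$ (the integral of the squared $z$-Lipschitz coefficient) and a suitable $\rho>0$, which bounds the solution by a weighted norm of the data. To convert this into the unweighted right-hand side of (\ref{est-linear-data}) I would apply H\"older's inequality with a conjugate pair determined by $\beta$ and $\beta_0$, separating the data term (controlled in $L^{\beta_0}$) from the weight $e^{\rho A_T}$; since $A_T$ differs from $\langle\mu\cdot W\rangle_T$ only by the bounded term $\|\lambda\|_\infty T$, the integrability of this weight, equivalently the admissibility of the change of measure above, is supplied by the reverse H\"older inequality (\ref{Rp-cd}) and the John--Nirenberg inequality. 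The constraint $\beta>p_\mu^\ast$ is exactly what keeps the dual exponent inside the admissible reverse H\"older range $(1,p_\mu)$, and the resulting constant inherits its dependence on $\|\mu\cdot W\|_{BMO_2}$ through $K(\,\cdot\,,\|\mu\cdot W\|_{BMO_2})$ in (\ref{Rp-const}), which is increasing in that norm; this accounts both for the stated dependence of $C$ and for its monotonicity. The strict gap between $\beta$ and $\beta_0$ on the two sides of (\ref{est-linear-data}) is the unavoidable loss of integrability intrinsic to this $L^p$ theory.

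I expect the only genuinely delicate point to be the exponent bookkeeping at the measure change: one must verify that the dual exponent used against the density sits strictly below $p_\mu$ so that the reverse H\"older inequality applies, and simultaneously that the unbounded weight $e^{\rho\langle\mu\cdot W\rangle_T}$ arising from the $z$-coefficient is integrable to the required order. Both are controlled by the $BMO$ properties recorded above, and since $\lambda$ is bounded and $(\xi,\varphi)$ lie in every $L^{\beta_0}$-space, no further integrability issue arises; the conclusion then follows by direct citation of Corollary 9 and Theorem 10 of \cite{Briand-SL-BSDE} once these thresholds are checked.
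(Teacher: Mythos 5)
Your proposal is correct and takes essentially the same route as the paper: the paper gives no independent proof of this proposition but states it as "a direct application of Corollary 9 and Theorem 10 in \cite{Briand-SL-BSDE}," which is exactly the citation you make, and your verification of the hypotheses (the stochastic Lipschitz structure with $y$-coefficient $\left\Vert \lambda\right\Vert _{\infty}$ and $z$-coefficient $\left\vert \mu_{s}\right\vert$, the BMO property of $\mu\cdot W$, and the exponent bookkeeping $\beta>p_{\mu}^{\ast}\iff\beta^{\ast}<p_{\mu}$) is sound. The extra detail you supply — the Girsanov representation of $Y$ and the control of the exponential weight via the reverse H\"{o}lder and John--Nirenberg inequalities — is consistent with both the cited results and the technique the paper itself uses in the proof of Proposition \ref{est-exp-yz-prop}.
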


As it is emphasized in the introduction, we need $\beta_{0}<2$ in the estimate
of the first and second-order variational equations. But $p_{\mu}^{\ast}\geq2$
implies $\beta_{0}>2$ in the above proposition. Hence, we give a new estimate
for the linear BSDE (\ref{linear-bsde}) in the following proposition to guarantee
$\beta_{0}<2$.

\begin{proposition}
\label{est-exp-yz-prop} Let the assumptions in Proposition
\ref{est-linear-prop} hold, and $(Y,Z)\in\mathcal{S}_{\mathcal{F}}^{\beta
}([0,T];\mathbb{R})\times\mathcal{M}_{\mathcal{F}}^{2,\beta}([0,T];\mathbb{R}%
^{d})$ be the unique solution to the linear BSDE (\ref{linear-bsde}) for all
$\beta>1$. Then, for any $\beta\in\left(  1\vee2p_{\mu}^{-1},2\right)  $ and
$\beta_{0}\in\left(  \beta,2\right)  $, we have%
\begin{equation}
\mathbb{E}\left[  \sup\limits_{t\in\lbrack0,T]}\Gamma_{t}\left\vert
Y_{t}\right\vert ^{\beta}+\left(  \int_{0}^{T}\left(  \Gamma_{t}\right)
^{\frac{2}{\beta}}\left\vert Z_{t}\right\vert ^{2}dt\right)  ^{\frac{\beta}%
{2}}\right]  \leq C\left(  \mathbb{E}\left[  \left(  \Gamma_{t}\right)
^{\frac{\beta_{0}}{\beta}}\left\vert \xi\right\vert ^{\beta_{0}}+\left(
\int_{0}^{T}\Gamma_{t}^{\frac{1}{\beta}}\left\vert \varphi_{t}\right\vert
dt\right)  ^{\beta_{0}}\right]  \right)  ^{\frac{\beta}{\beta_{0}}%
},\label{est-exp-yz}%
\end{equation}
where $\Gamma_{t}=\mathcal{E}\left(  \int_{0}^{t}\mu_{s}^{\intercal}%
dW_{s}\right)  $ for $t\in\lbrack0,T],$ and the constant $C$ depends on
$\beta$, $\beta_{0}$, $T$, $\left\Vert \lambda\right\Vert _{\mathcal{\infty}}$
and $\left\Vert \mu\cdot W\right\Vert _{BMO_{2}}$.
\end{proposition}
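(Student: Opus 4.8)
The plan is to run a weighted a priori estimate based on It\^o's formula applied to $\Gamma_t|Y_t|^\beta$, organised so that the unbounded coefficient disappears and so that the gap $\beta<\beta_0$ is used to invoke Doob's maximal inequality rather than the Burkholder--Davis--Gundy inequality. First I would apply It\^o's formula to the product $\Gamma_t|Y_t|^\beta$ (after the usual regularisation $(\varepsilon+|Y_t|^2)^{\beta/2}$ and passage to the limit $\varepsilon\downarrow0$, legitimate since $\beta>1$). Writing $d\Gamma_t=\Gamma_t\mu_t^\intercal dW_t$, the decisive algebraic point is that the cross-variation term $\beta\Gamma_t|Y_t|^{\beta-1}\mathrm{sgn}(Y_t)\mu_t^\intercal Z_t\,dt$ produced by $d\langle\Gamma,|Y|^\beta\rangle_t$ exactly cancels the drift term $-\beta\Gamma_t|Y_t|^{\beta-1}\mathrm{sgn}(Y_t)\mu_t^\intercal Z_t\,dt$ coming from the generator; this is precisely the effect of the change of measure $d\mathbb{Q}=\Gamma_T\,d\mathbb{P}$ (recall $\mathcal{E}(\mu\cdot W)$ is a uniformly integrable martingale since $\mu\cdot W\in BMO$), and it removes the only unbounded coefficient. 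Absorbing $\lambda$ into the bounded discount factor $\exp(\beta\int_0^t\lambda_s\,ds)$, so that we may take $\lambda\equiv0$ up to a constant, I obtain the weighted energy identity
\begin{equation*}
\Gamma_t|Y_t|^\beta+\tfrac{\beta(\beta-1)}{2}\int_t^T\Gamma_s|Y_s|^{\beta-2}|Z_s|^2\,ds=\Gamma_T|\xi|^\beta+\beta\int_t^T\Gamma_s|Y_s|^{\beta-1}\mathrm{sgn}(Y_s)\varphi_s\,ds-\int_t^T\Gamma_s\big(\beta|Y_s|^{\beta-1}\mathrm{sgn}(Y_s)Z_s+|Y_s|^\beta\mu_s\big)^\intercal dW_s.
\end{equation*}

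Before exploiting this identity I must verify that the stochastic integral above is a genuine martingale, so that conditional expectations annihilate it. This integrability, together with the finiteness of $\mathbb{E}[\sup_t(\Gamma_t|Y_t|^\beta)^{\beta_0/\beta}]$ needed for the absorption step below, is where the standing hypotheses enter: by Proposition \ref{est-linear-prop} the pair $(Y,Z)$ has finite moments of every order, and by the reverse H\"older inequality (\ref{Rp-cd}) the density $\Gamma$ has finite moments of every order strictly below $p_\mu$. The assumptions $\beta\in(1\vee 2p_\mu^{-1},2)$ and $\beta_0\in(\beta,2)$ guarantee $\beta_0/\beta<p_\mu$, which is exactly the room required for $\Gamma^{\beta_0/\beta}$ (and, via Doob, $\sup_t\Gamma_t^{\beta_0/\beta}$) to be integrable against the high moments of $Y$; the John--Nirenberg inequality (\ref{J-N ineq}) and the energy inequality (\ref{energy-ineq}) supply the remaining moments of $\langle\mu\cdot W\rangle_T$. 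I expect this bookkeeping to be the main technical obstacle.

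With integrability secured, I would take $\mathbb{E}[\,\cdot\mid\mathcal{F}_t]$ in the identity, drop the nonnegative energy term, and bound the source term by $\beta m^{(\beta-1)/\beta}\Phi$, where $m:=\sup_{t}\Gamma_t|Y_t|^\beta$ and $\Phi:=\int_0^T\Gamma_s^{1/\beta}|\varphi_s|\,ds$, using $\Gamma_s|Y_s|^{\beta-1}=(\Gamma_s|Y_s|^\beta)^{(\beta-1)/\beta}\Gamma_s^{1/\beta}$. This yields $\Gamma_t|Y_t|^\beta\le\mathbb{E}[G\mid\mathcal{F}_t]$ with $G:=\Gamma_T|\xi|^\beta+\beta m^{(\beta-1)/\beta}\Phi\ge0$, hence $m\le\sup_t\mathbb{E}[G\mid\mathcal{F}_t]$. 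The gap $\beta<\beta_0$ now pays off: since $q:=\beta_0/\beta>1$, Doob's $L^q$ maximal inequality gives $\mathbb{E}[m^q]\le(q/(q-1))^q\,\mathbb{E}[G^q]$, and after expanding $G^q$ (noting $q\beta=\beta_0$) and applying Young's inequality in the form $m^{q(\beta-1)/\beta}\Phi^q\le\varepsilon m^q+C_\varepsilon\Phi^{\beta_0}$, I can absorb the finite term $\varepsilon\mathbb{E}[m^q]$ into the left-hand side to obtain $\mathbb{E}[m^{\beta_0/\beta}]\le C\,\mathbb{E}[\Gamma_T^{\beta_0/\beta}|\xi|^{\beta_0}+\Phi^{\beta_0}]$. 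Raising to the power $\beta/\beta_0$ (Jensen) then controls $\mathbb{E}[m]$ by the right-hand side of (\ref{est-exp-yz}). This step deliberately replaces Burkholder--Davis--Gundy, whose martingale part carries the awkward weight contribution $\int|Y|^\beta\,d\Gamma$ that cannot be absorbed at the first-moment level.

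It remains to treat the $Z$-term. Taking expectations in the energy identity at $t=0$, the stochastic integral having mean zero, gives $\mathbb{E}[\int_0^T\Gamma_s|Y_s|^{\beta-2}|Z_s|^2\,ds]\le C\,\mathbb{E}[\Gamma_T|\xi|^\beta+m^{(\beta-1)/\beta}\Phi]$. Using the elementary interpolation $\Gamma_s^{2/\beta}|Z_s|^2=(\Gamma_s|Y_s|^{\beta-2}|Z_s|^2)(\Gamma_s|Y_s|^\beta)^{(2-\beta)/\beta}$ followed by Young's inequality with exponents $2/(2-\beta)$ and $2/\beta$, I obtain $(\int_0^T\Gamma_s^{2/\beta}|Z_s|^2\,ds)^{\beta/2}\le\tfrac{2-\beta}{2}m+\tfrac{\beta}{2}\int_0^T\Gamma_s|Y_s|^{\beta-2}|Z_s|^2\,ds$; taking expectations, inserting the two preceding bounds, and using H\"older and Jensen to pass from exponent $\beta$ to $\beta_0$ on the data, closes the estimate and yields (\ref{est-exp-yz}) with a constant depending on $\beta,\beta_0,T,\|\lambda\|_\infty$ and $\|\mu\cdot W\|_{BMO_2}$.
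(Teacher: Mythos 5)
Your proposal is correct, but it takes a genuinely different route from the paper. The paper never touches the fractional power $|Y|^{\beta}$ with It\^{o}'s formula: it applies It\^{o} to the linear quantity $\tilde{\Gamma}Y$ to obtain the representation $Y_{t}=\mathbb{E}_{t}\bigl[(\tilde{\Gamma}_{T}/\tilde{\Gamma}_{t})\xi+\int_{t}^{T}(\tilde{\Gamma}_{s}/\tilde{\Gamma}_{t})\varphi_{s}\,ds\bigr]$, bounds $|Y_{t}|$ by a conditional H\"{o}lder argument using $\mathbb{E}_{t}[\sup_{s}\Gamma_{s}^{t}]\leq C$ (conditional Doob plus reverse H\"{o}lder), and only then invokes Doob's $L^{\beta_{0}/\beta}$ inequality on the auxiliary martingale $M_{t}=\mathbb{E}_{t}[\Gamma_{T}|\xi|^{\beta}+(\int_{0}^{T}\Gamma_{s}^{1/\beta}|\varphi_{s}|ds)^{\beta}]$; the $Z$-estimate then requires a \emph{second} It\^{o} computation on $\tilde{\Gamma}^{2/\beta}|Y|^{2}$, the B-D-G inequality, Young absorption, and the energy inequality for $\mu\cdot W$ to handle the weight $(\int_{0}^{T}|\mu_{t}|^{2}dt)^{\beta/2}$. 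You instead perform one It\^{o} computation on $\Gamma|Y|^{\beta}$ itself, exploit the exact cancellation of the unbounded term $\mu^{\intercal}Z$ against the cross-variation $d\langle\Gamma,|Y|^{\beta}\rangle$ (the pathwise form of the Girsanov change of measure), and extract \emph{both} estimates from the resulting energy identity: the $Y$-part by conditioning, Doob with $q=\beta_{0}/\beta>1$ and Young absorption (whose a priori finiteness $\mathbb{E}[m^{\beta_{0}/\beta}]<\infty$ you correctly identify as the place where $\beta_{0}/\beta<2/\beta<p_{\mu}$ and the reverse H\"{o}lder inequality are needed), and the $Z$-part by interpolating $\Gamma_{s}^{2/\beta}|Z_{s}|^{2}=(\Gamma_{s}|Y_{s}|^{\beta-2}|Z_{s}|^{2})(\Gamma_{s}|Y_{s}|^{\beta})^{(2-\beta)/\beta}$ against the energy term. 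What each approach buys: the paper keeps all It\^{o} computations at integer powers, avoiding the technicalities of $|y|^{\beta}$ for $\beta<2$, at the cost of B-D-G and the energy inequality; your route, in the spirit of the $L^{p}$-theory of BSDEs for $p<2$, dispenses with B-D-G and the energy inequality entirely and even yields a constant not depending on $\|\mu\cdot W\|_{BMO_{2}}$ in the final absorption, at the cost of the regularization machinery.

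One technical point you should make explicit: after the regularization $(\varepsilon+|Y|^{2})^{\beta/2}$ and the limit $\varepsilon\downarrow0$, the energy term appears as $\int\Gamma_{s}|Y_{s}|^{\beta-2}\mathbf{1}_{\{Y_{s}\neq0\}}|Z_{s}|^{2}ds$ (and the identity degrades to an inequality in the useful direction), so your interpolation for the $Z$-part is, as written, only valid off the set $\{Y_{s}=0\}$. This is rescued by the occupation-time formula, which gives $\int_{0}^{T}\mathbf{1}_{\{Y_{s}=0\}}|Z_{s}|^{2}ds=0$ a.s. for the continuous semimartingale $Y$, so the indicator is harmless; but without this observation the step "$\int_{0}^{T}\Gamma_{s}^{2/\beta}|Z_{s}|^{2}ds\leq m^{(2-\beta)/\beta}\int_{0}^{T}\Gamma_{s}|Y_{s}|^{\beta-2}|Z_{s}|^{2}ds$" has a hole precisely where the integrand is singular.
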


\begin{proof}
Set $\tilde{\Gamma}_{t}=\exp\left\{  \int_{0}^{t}\lambda_{s}ds\right\}
\Gamma_{t}$ for $t\in\lbrack0,T]$ and note that $\tilde{\Gamma
}_{t}\leq e^{\left\Vert \lambda\right\Vert _{\mathcal{\infty}}T}\Gamma_{t}$.
Applying It\^{o}'s formula to $\tilde{\Gamma}Y$, we get%
\begin{equation}
\tilde{\Gamma}_{t}Y_{t}=\tilde{\Gamma}_{T}\xi+\int_{t}^{T}\tilde{\Gamma}%
_{s}\varphi_{s}ds-\int_{t}^{T}\tilde{\Gamma}_{s}(Y_{s}\mu_{s}^{\intercal
}+Z_{s}^{\intercal})dW_{s}.\label{gamma-y}%
\end{equation}
By reverse H\"{o}lder's inequality, we have $\mathbb{E}\left[
\left(  \int_{0}^{T}\left\vert \tilde{\Gamma}_{t}(Y_{t}\mu_{t}+Z_{t}%
)\right\vert ^{2}dt\right)  ^{\frac{1}{2}}\right]  <\infty$. Then, by taking
conditional expectation on both sides of (\ref{gamma-y}), we obtain%
\[%
\begin{array}
[c]{rl}%
Y_{t}= & \mathbb{E}\left[  \left(  \frac{\tilde{\Gamma}_{T}}{\tilde{\Gamma
}_{t}}\right)  \xi+\int_{t}^{T}\left(  \frac{\tilde{\Gamma}_{s}}{\tilde
{\Gamma}_{t}}\right)  \varphi_{s}ds\mid\mathcal{F}_{t}\right]  .
\end{array}
\]
For the simplicity of writing, we denote $\mathbb{E}\left[  \cdot
\mid\mathcal{F}_{t}\right]  $ by $\mathbb{E}_{t}\left[  \cdot\right]  $, and
set $\left(  \Gamma_{s}^{t}\right)  _{s\in\lbrack t,T]}=\left(  \frac
{\Gamma_{s}}{\Gamma_{t}}\right)  _{s\in\lbrack t,T]}$ for any fixed
$t\in\lbrack0,T]$. For any $\beta\in\left(  1\vee2p_{\mu}^{-1},2\right)  $ and
$\beta^{\ast}=\beta(\beta-1)^{-1}$, by H\"{o}lder's inequality,%
\[%
\begin{array}
[c]{cl}%
\left\vert Y_{t}\right\vert  & \leq e^{\left\Vert \lambda\right\Vert
_{\mathcal{\infty}}T}\mathbb{E}_{t}\left[  \Gamma_{T}^{t}\left\vert
\xi\right\vert +\int_{t}^{T}\Gamma_{s}^{t}\left\vert \varphi_{s}\right\vert
ds\right]  \\
& \leq e^{\left\Vert \lambda\right\Vert _{\mathcal{\infty}}T}\mathbb{E}%
_{t}\left[  \left(  \Gamma_{T}^{t}\right)  ^{\frac{1}{\beta^{\ast}}+\frac
{1}{\beta}}\left\vert \xi\right\vert +\left(  \sup\limits_{s\in\lbrack
t,T]}\Gamma_{s}^{t}\right)  ^{\frac{1}{\beta^{\ast}}}\int_{t}^{T}\left(
\Gamma_{s}^{t}\right)  ^{\frac{1}{\beta}}\left\vert \varphi_{s}\right\vert
ds\right]  \\
& \leq e^{\left\Vert \lambda\right\Vert _{\mathcal{\infty}}T}\left(
\mathbb{E}_{t}\left[  \sup\limits_{s\in\lbrack t,T]}\Gamma_{s}^{t}\right]
\right)  ^{\frac{1}{\beta^{\ast}}}\left\{  \left(  \mathbb{E}_{t}\left[
\Gamma_{T}^{t}\left\vert \xi\right\vert ^{\beta}\right]  \right)  ^{\frac
{1}{\beta}}\right.  \\
& \ \ +\left.  \left(  \mathbb{E}_{t}\left[  \left(  \int_{t}^{T}\left(
\Gamma_{s}^{t}\right)  ^{\frac{1}{\beta}}\left\vert \varphi_{s}\right\vert
ds\right)  ^{\beta}\right]  \right)  ^{\frac{1}{\beta}}\right\}  .
\end{array}
\]
Thanks to Doob's inequality and reverse H\"{o}lder's inequality, we have%
\[%
\begin{array}
[c]{rcl}%
\mathbb{E}_{t}\left[  \sup\limits_{s\in\lbrack t,T]}\Gamma_{s}^{t}\right]
\leq & \left(  \mathbb{E}_{t}\left[  \sup\limits_{s\in\lbrack t,T]}\left(
\Gamma_{s}^{t}\right)  ^{\frac{1+p_{\mu}}{2}}\right]  \right)  ^{\frac
{2}{1+p_{\mu}}}\leq & C\left(  \mathbb{E}_{t}\left[  \left(  \Gamma_{T}%
^{t}\right)  ^{\frac{1+p_{\mu}}{2}}\right]  \right)  ^{\frac{2}{1+p_{\mu}}%
}\leq C,
\end{array}
\]
where $C$ depends on $\left\Vert \mu\cdot W\right\Vert _{BMO_{2}}$. Thus, we
obtain%
\[%
\begin{array}
[c]{cc}%
\left\vert Y_{t}\right\vert \leq & C\left\{  \left(  \mathbb{E}_{t}\left[
\Gamma_{T}^{t}\left\vert \xi\right\vert ^{\beta}\right]  \right)  ^{\frac
{1}{\beta}}+\left(  \mathbb{E}_{t}\left[  \left(  \int_{t}^{T}\left(
\Gamma_{s}^{t}\right)  ^{\frac{1}{\beta}}\left\vert \varphi_{s}\right\vert
ds\right)  ^{\beta}\right]  \right)  ^{\frac{1}{\beta}}\right\}  ,
\end{array}
\]
which implies that%
\[%
\begin{array}
[c]{rl}%
\sup\limits_{t\in\lbrack0,T]}\Gamma_{t}\left\vert Y_{t}\right\vert ^{\beta
}\leq & C\sup\limits_{t\in\lbrack0,T]}\mathbb{E}_{t}\left[  \Gamma
_{T}\left\vert \xi\right\vert ^{\beta}+\left(  \int_{0}^{T}\Gamma_{s}%
^{\frac{1}{\beta}}\left\vert \varphi_{s}\right\vert ds\right)  ^{\beta
}\right]  ,
\end{array}
\]
where $C$ depends on $\beta$, $T$, $\left\Vert \lambda\right\Vert _{\mathcal{\infty}}$
and $\left\Vert \mu\cdot W\right\Vert _{BMO_{2}}$.

Set $M_{t}=\mathbb{E}_{t}\left[  \Gamma_{T}\left\vert \xi\right\vert ^{\beta
}+\left(  \int_{0}^{T}\left(  \Gamma_{s}\right)  ^{\frac{1}{\beta}}\left\vert
\varphi_{s}\right\vert ds\right)  ^{\beta}\right]  $
for $t\in\lbrack0,T]$. For any $\beta_{0}\in\left(  \beta,2\right)  $, by H\"{o}lder's inequality,
Doob's inequality and reverse H\"{o}lder's inequality, we have
\[%
\begin{array}
[c]{rl}%
\mathbb{E}\left[  \left\vert M_{T}\right\vert ^{\frac{\beta_{0}}{\beta}%
}\right]  \leq & C\mathbb{E}\left[  \sup\limits_{s\in\lbrack0,T]}\left(
\Gamma_{s}\right)  ^{\frac{\beta_{0}}{\beta}}\left(  \left\vert \xi\right\vert
^{\beta_{0}}+\left(  \int_{0}^{T}\left\vert \varphi_{s}\right\vert ds\right)
^{\beta_{0}}\right)  \right]  \\
\leq & C\left(  \mathbb{E}\left[  \sup\limits_{s\in\lbrack0,T]}\left(
\Gamma_{s}\right)  ^{\frac{2}{\beta}}\right]  \right)  ^{\frac{\beta_{0}}{2}%
}\left(  \mathbb{E}\left[  \left\vert \xi\right\vert ^{\frac{2\beta_{0}%
}{2-\beta_{0}}}+\left(  \int_{0}^{T}\left\vert \varphi_{s}\right\vert
ds\right)  ^{\frac{2\beta_{0}}{2-\beta_{0}}}\right]  \right)  ^{\frac
{2-\beta_{0}}{2}}\\
\leq & C\left(  \mathbb{E}\left[  \Gamma_{T}^{\frac{2}{\beta}}\right]
\right)  ^{\frac{\beta_{0}}{2}}\left(  \mathbb{E}\left[  \left\vert
\xi\right\vert ^{\frac{2\beta_{0}}{2-\beta_{0}}}+\left(  \int_{0}%
^{T}\left\vert \varphi_{s}\right\vert ds\right)  ^{\frac{2\beta_{0}}%
{2-\beta_{0}}}\right]  \right)  ^{\frac{2-\beta_{0}}{2}}\\
< & \infty.
\end{array}
\]
Thus, following from Doob's inequality, we
have%
\begin{equation}
\mathbb{E}\left[  \left(  \sup\limits_{t\in\lbrack0,T]}\Gamma_{t}\left\vert
Y_{t}\right\vert ^{\beta}\right)  ^{\frac{\beta_{0}}{\beta}}\right]  \leq
C\mathbb{E}\left[  \left(  \sup\limits_{t\in\lbrack0,T]}\left\vert
M_{t}\right\vert \right)  ^{\frac{\beta_{0}}{\beta}}\right]  \leq
C\mathbb{E}\left[  \left\vert M_{T}\right\vert ^{\frac{\beta_{0}}{\beta}%
}\right]< \infty  ,\label{est-exp-y}%
\end{equation}
where $C$ depends on $\beta$, $\beta_{0}$, $T$, $\left\Vert \lambda\right\Vert _{\mathcal{\infty}}$
and $\left\Vert \mu\cdot W\right\Vert _{BMO_{2}}$.

Now we estimate $\mathbb{E}\left[  \left(  \int_{0}^{T}\Gamma_{t}^{\frac{2}{\beta}}\left\vert
Z_{t}\right\vert ^{2}dt\right)  ^{\frac{\beta}{2}}\right]  $. Applying It\^{o}'s formula to $\tilde{\Gamma
}^{\frac{2}{\beta}}\left\vert Y\right\vert ^{2}$, we get%
\begin{equation}%
\begin{array}
[c]{l}%
\left\vert Y_{0}\right\vert ^{2}+\int_{0}^{T}\tilde{\Gamma}_{t}^{\frac
{2}{\beta}}\left\vert Z_{t}\right\vert ^{2}dt+\frac{2}{\beta}(\frac{2}{\beta
}-1)\int_{0}^{T}\tilde{\Gamma}_{t}^{\frac{2}{\beta}}\left\vert \mu
_{t}\right\vert ^{2}\left\vert Y_{t}\right\vert ^{2}dt\\
=\tilde{\Gamma}_{T}^{\frac{2}{\beta}}\left\vert \xi\right\vert ^{2}%
+(2-\frac{2}{\beta})\int_{0}^{T}\tilde{\Gamma}_{t}^{\frac{2}{\beta}}%
\lambda_{t}\left\vert Y_{t}\right\vert ^{2}dt+(2-\frac{4}{\beta})\int_{0}%
^{T}\tilde{\Gamma}_{t}^{\frac{2}{\beta}}Y_{t}\mu_{t}^{\intercal}Z_{t}dt\\
\ \ +2\int_{0}^{T}\tilde{\Gamma}_{t}^{\frac{2}{\beta}}Y_{t}\varphi_{t}%
dt-\frac{2}{\beta}\int_{0}^{T}\tilde{\Gamma}_{t}^{\frac{2}{\beta}}\left\vert
Y_{t}\right\vert ^{2}\mu_{t}^{\intercal}dW_{t}-2\int_{0}^{T}\tilde{\Gamma}%
_{t}^{\frac{2}{\beta}}Y_{t}Z_{t}^{\intercal}dW_{t}.
\end{array}
\label{Ito-formula-gammaZ}%
\end{equation}
Recall that $\beta\in\left(  1\vee2p_{\mu}^{-1},2\right)  $. From
(\ref{Ito-formula-gammaZ}), by using the inequality
\begin{equation}
\left\vert a+b\right\vert ^{\frac{\beta}{2}}\leq\left\vert a\right\vert
^{\frac{\beta}{2}}+\left\vert b\right\vert ^{\frac{\beta}{2}},\quad
a,b\in\mathbb{R},\label{ess-ineq}%
\end{equation}
and the B-D-G inequality, we get
\begin{equation}%
\begin{array}
[c]{rl}
& \mathbb{E}\left[  \left(  \int_{0}^{T}\tilde{\Gamma}_{t}^{\frac{2}{\beta}%
}\left\vert Z_{t}\right\vert ^{2}dt\right)  ^{\frac{\beta}{2}}\right]  \\
\leq & \mathbb{E}\left[  \sup\limits_{t\in\lbrack0,T]}\tilde{\Gamma}%
_{T}\left\vert Y_{t}\right\vert ^{\beta}\right]  +(2-\frac{2}{\beta}%
)^{\frac{\beta}{2}}\mathbb{E}\left[  \left(  \int_{0}^{T}\tilde{\Gamma}%
_{t}^{\frac{2}{\beta}}\lambda_{t}\left\vert Y_{t}\right\vert ^{2}dt\right)
^{\frac{\beta}{2}}\right]  \\
& +(\frac{4}{\beta}-2)^{\frac{\beta}{2}}\mathbb{E}\left[  \left(  \int_{0}%
^{T}\tilde{\Gamma}_{t}^{\frac{2}{\beta}}\left\vert Y_{t}\right\vert \left\vert
\mu_{t}\right\vert \left\vert Z_{t}\right\vert dt\right)  ^{\frac{\beta}{2}%
}\right]  +2^{\frac{\beta}{2}}\mathbb{E}\left[  \left(  \int_{0}^{T}%
\tilde{\Gamma}_{t}^{\frac{2}{\beta}}\left\vert Y_{t}\right\vert \left\vert
\varphi_{t}\right\vert dt\right)  ^{\frac{\beta}{2}}\right]  \\
& +3\left(  \frac{2}{\beta}\right)  ^{\frac{\beta}{2}}\mathbb{E}\left[
\left(  \int_{0}^{T}\tilde{\Gamma}_{t}^{\frac{4}{\beta}}\left\vert
Y_{t}\right\vert ^{4}\left\vert \mu_{t}\right\vert ^{2}dt\right)
^{\frac{\beta}{4}}\right]  +3\cdot2^{\frac{\beta}{2}}\mathbb{E}\left[  \left(
\int_{0}^{T}\tilde{\Gamma}_{t}^{\frac{4}{\beta}}\left\vert Y_{t}\right\vert
^{2}\left\vert Z_{t}\right\vert ^{2}dt\right)  ^{\frac{\beta}{4}}\right]  .
\end{array}
\label{Expect-Gamma-half-beta}%
\end{equation}
Since
\[
\int_{0}^{T}\tilde{\Gamma}_{t}^{\frac{2}{\beta}}\left\vert Y_{t}\right\vert
\left\vert \mu_{t}\right\vert \left\vert Z_{t}\right\vert dt\leq(4\delta
)^{-1}\int_{0}^{T}\tilde{\Gamma}_{t}^{\frac{2}{\beta}}\left\vert
Y_{t}\right\vert ^{2}\left\vert \mu_{t}\right\vert ^{2}dt+\delta\int_{0}%
^{T}\tilde{\Gamma}_{t}^{\frac{2}{\beta}}\left\vert Z_{t}\right\vert ^{2}dt,
\]%
\[
\left(  \int_{0}^{T}\tilde{\Gamma}_{t}^{\frac{4}{\beta}}\left\vert
Y_{t}\right\vert ^{2}\left\vert Z_{t}\right\vert ^{2}dt\right)  ^{\frac{\beta
}{4}}\leq(4\delta)^{-1}\sup\limits_{t\in\lbrack0,T]}\tilde{\Gamma}%
_{t}\left\vert Y_{t}\right\vert ^{\beta}+\delta\left(  \int_{0}^{T}%
\tilde{\Gamma}_{t}^{\frac{2}{\beta}}\left\vert Z_{t}\right\vert ^{2}dt\right)
^{\frac{\beta}{2}}%
\]
with $\delta=1\wedge\left\{  2\left[  (\frac{4}{\beta}-2)^{\frac{\beta}{2}%
}+3\cdot2^{\frac{\beta}{2}}\right]  \right\}  ^{-\frac{2}{\beta}}$, and
$e^{-\left\Vert \lambda\right\Vert _{\infty}T}\Gamma_{t}\leq\tilde{\Gamma}%
_{t}\leq e^{\left\Vert \lambda\right\Vert _{\infty}T}\Gamma_{t}$, we have
\[%
\begin{array}
[c]{rl}
& \mathbb{E}\left[  \left(  \int_{0}^{T}\Gamma_{t}^{\frac{2}{\beta}}\left\vert
Z_{t}\right\vert ^{2}dt\right)  ^{\frac{\beta}{2}}\right]  \\
\leq & C\mathbb{E}\left[  \sup\limits_{t\in\lbrack0,T]}\Gamma_{t}\left\vert
Y_{t}\right\vert ^{\beta}+\left(  \int_{0}^{T}\left(  \Gamma_{t}\right)
^{\frac{1}{\beta}}\left\vert \varphi_{t}\right\vert dt\right)  ^{\beta
}+\left(  \sup\limits_{t\in\lbrack0,T]}\Gamma_{t}\left\vert Y_{t}\right\vert
^{\beta}\right)  \left(  \int_{0}^{T}\left\vert \mu_{t}\right\vert
^{2}dt\right)  ^{\frac{\beta}{2}}\right]  ,
\end{array}
\]
where $C$ depends on $\beta,T$ and $\left\Vert \lambda\right\Vert
_{\mathcal{\infty}}$. Following from H\"{o}lder's inequality and the energy
inequality for $\mu\cdot W$, we further get%
\begin{equation}
\mathbb{E}\left[  \left(  \int_{0}^{T}\Gamma_{t}^{\frac{2}{\beta}}\left\vert
Z_{t}\right\vert ^{2}dt\right)  ^{\frac{\beta}{2}}\right]  \leq C\left(
\mathbb{E}\left[  \left(  \sup\limits_{t\in\lbrack0,T]}\Gamma_{t}\left\vert
Y_{t}\right\vert ^{\beta}\right)  ^{\frac{\beta_{0}}{\beta}}+\left(  \int%
_{0}^{T}\left(  \Gamma_{t}\right)  ^{\frac{1}{\beta}}\left\vert \varphi
_{t}\right\vert dt\right)  ^{\beta_{0}}\right]  \right)  ^{\frac{\beta}%
{\beta_{0}}},\label{est-exp-z}%
\end{equation}
where $C$ depends on $\beta,$ $\beta_{0},$ $T$, $\left\Vert \lambda\right\Vert
_{\mathcal{\infty}}$ and $\left\Vert \mu\cdot W\right\Vert _{BMO_{2}}$.
Combining (\ref{est-exp-z}) with (\ref{est-exp-y}), we obtain
(\ref{est-exp-yz}) by H\"{o}lder's inequality.
\end{proof}

\bigskip

Now we consider the following $n$-dimensional linear BSDE with unbounded
coefficients:%
\begin{equation}
Y_{t}=\xi+\int_{t}^{T}\left\{  A_{s}^{\intercal}Y_{s}+\sum\limits_{i=1}%
^{d}\left(  \beta_{s}^{i}I_{n}+C_{s}^{i}\right)  ^{\intercal}Z_{s}^{i}%
+f_{s}\right\}  ds-\sum\limits_{i=1}^{d}\int_{t}^{T}Z_{s}^{i}dW_{s}^{i},\text{
}t\in\lbrack0,T], \label{multi-linear-BSDE}%
\end{equation}
where $A,$ $C^{i}:[0,T]\times\Omega\longmapsto\mathbb{R}_{{}}^{n\times n}$ and
$\beta^{i}:[0,T]\times\Omega\longmapsto\mathbb{R}$ are $\mathbb{F}$-adapted
processes for $i=1,2,\ldots,d$, and $I_{n}$ is the $n\times n$ identity
matrix. In order to obtain the solvability of the above linear BSDE, we need
to study the following matrix-valued linear SDE:%
\begin{equation}
X_{t}=I_{n}+\int_{0}^{t}A_{s}X_{s}ds+\sum_{i=1}^{d}\int_{0}^{t}\left(
\beta_{s}^{i}I_{n}+C_{s}^{i}\right)  X_{s}dW_{s}^{i},\text{ \ }t\in
\lbrack0,T]. \label{multi-linear-SDE}%
\end{equation}

\begin{proposition}
\label{prop-multi-reverseHolder} Assume that
\[
\left\Vert \left(  \left\vert A\right\vert +\sum_{i=1}^{d}\left\vert \beta
^{i}\right\vert \right)  \cdot W^{1}\right\Vert _{BMO_{2}}+\sum_{i=1}%
^{d}\left\Vert C_{{}}^{i}\right\Vert _{\infty}<\infty.
\]
Denote by $\bar{p}$ the constant such that
\[
\Psi(\bar{p})=\left(  2\sum\limits_{i=1}^{d}\left(  n\left\Vert \beta^{i}\cdot
W^{i}\right\Vert _{BMO_{2}}^{2}+\left\Vert C_{{}}^{i}\right\Vert _{\infty}%
^{2}T\right)  \right)  ^{\frac{1}{2}},
\]
where $\Psi(\cdot)$ is defined in (\ref{Rp-cd-func}). Then, there exists a
unique strong solution $X$ to (\ref{multi-linear-SDE}). Moreover, $X$
satisfies the reverse H\"{o}lder inequality for $p\in\left(  1,\bar{p}\right)
$, that is,%
\[
\mathbb{E}\left[  \sup\limits_{s\in\lbrack t,T]}\left\vert X_{s}X_{t}%
^{-1}\right\vert ^{p}\mid\mathcal{F}_{t}\right]  \leq C\text{ for any }%
t\in\lbrack0,T],
\]
where $C$ is a constant depending on $p,$ $T,$ $\left\Vert \left(  \left\vert
A\right\vert +\sum_{i=1}^{d}\left\vert \beta^{i}\right\vert \right)  \cdot
W^{1}\right\Vert _{BMO_{2}}$ and $\sum_{i=1}^{d}\left\Vert C_{{}}%
^{i}\right\Vert _{\infty}$, and $X^{-1}$ is the inverse of $X$.
\end{proposition}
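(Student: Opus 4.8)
The plan is to first settle existence, uniqueness and invertibility, and then to prove the reverse H\"older inequality by identifying the matrix martingale whose BMO-energy calibrates the threshold $\bar p$.

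For well-posedness I would split off the scalar part of the diffusion. Writing $N_t=\mathcal{E}\big(\sum_{i=1}^d\int_0^\cdot\beta_s^i\,dW_s^i\big)_t$, which is a strictly positive uniformly integrable martingale (the standing assumption forces $\sum_i\beta^i\cdot W^i\in BMO$, so the Dol\'eans--Dade exponential of a BMO-martingale is uniformly integrable), I look for a solution $X_t=N_t\hat X_t$. It\^o's product rule shows that $X$ solves (\ref{multi-linear-SDE}) if and only if $\hat X$ solves
\[
\hat X_t=I_n+\int_0^t\hat A_s\hat X_s\,ds+\sum_{i=1}^d\int_0^t C_s^i\hat X_s\,dW_s^i,\qquad \hat A=A-\sum_{i=1}^d\beta^iC^i ,
\]
whose diffusion coefficients $C^i$ are bounded and whose drift $\hat A$ lies in $\bigcap_{q\ge1}\mathcal{M}_{\mathcal{F}}^{2,q}$ by the energy inequality (\ref{energy-ineq}). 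Standard theory for linear SDEs then yields a unique strong solution $\hat X$, invertible with $\hat X^{-1}$ solving the companion equation; hence $X=N\hat X$ is the unique strong solution of (\ref{multi-linear-SDE}) and $X_t^{-1}=\hat X_t^{-1}N_t^{-1}$ exists.

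For the reverse H\"older inequality, fix $t$ and note that $X_s^t:=X_sX_t^{-1}$ solves (\ref{multi-linear-SDE}) on $[t,T]$ started from $I_n$. The martingale part of this dynamics is driven by the matrix martingale $\mathcal{N}=\sum_{i=1}^d(\beta^iI_n+C^i)\cdot W^i$, and the whole point is that $\bar p$ is the reverse-H\"older exponent attached to $\mathcal{N}$. Its trace bracket satisfies
\[
\mathrm{tr}\langle\mathcal{N}\rangle_T-\mathrm{tr}\langle\mathcal{N}\rangle_\tau=\sum_{i=1}^d\int_\tau^T\big(n(\beta_r^i)^2+2\beta_r^i\,\mathrm{tr}\,C_r^i+|C_r^i|^2\big)\,dr ,
\]
and since $|\mathrm{tr}\,C^i|\le\sqrt n\,|C^i|$, the cross term is absorbed by the arithmetic--geometric mean inequality to give $\|\mathcal{N}\|_{BMO_2}^2\le 2\sum_{i=1}^d\big(n\|\beta^i\cdot W^i\|_{BMO_2}^2+\|C^i\|_\infty^2T\big)=\Psi(\bar p)^2$, with $\Psi$ as in (\ref{Rp-cd-func}); it is precisely the factor $n$ from $\mathrm{tr}[(\beta^iI_n)(\beta^iI_n)^{\intercal}]=n(\beta^i)^2$ that makes $\bar p$ dimension-dependent. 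I would then apply It\^o's formula to $\Phi_s=|X_s^t|^2=\mathrm{tr}\big((X_s^t)^{\intercal}X_s^t\big)$; the resulting semimartingale decomposition has its martingale part governed by $\mathcal{N}$ and its genuinely unbounded drift controlled by $|A|$ and $\sum_i(\beta^i)^2$, the contributions of the $C^i$ being bounded by $\|C^i\|_\infty$.

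From this decomposition I would estimate $\mathbb{E}_t[\sup_s\Phi_s^{p/2}]$ by H\"older's inequality, separating a stochastic-exponential factor tied to $\mathcal{N}$ from a drift factor of the form $\exp\{\lambda\int_t^T(|A_r|+\sum_i(\beta_r^i)^2)\,dr\}$. The first factor is controlled by Doob's maximal inequality together with the reverse H\"older inequality (\ref{Rp-cd}), which applies exactly because $\|\mathcal{N}\|_{BMO_2}=\Psi(\bar p)$ and $p<\bar p$; the second factor is finite with a quantitative bound by the John--Nirenberg inequality (\ref{J-N ineq}) and the energy inequality (\ref{energy-ineq}), using the boundedness of the $C^i$ and the standing assumption $(|A|+\sum_i|\beta^i|)\cdot W^1\in BMO$. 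Choosing the conjugate exponents to balance the two factors and taking the supremum over $t$ yields the asserted bound, with $C$ depending only on the indicated quantities.

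The main obstacle is that the unbounded-quadratic-variation part $\beta^iI_n$ (with $\beta^i$ only BMO, not essentially bounded) places $\mathcal{N}$ outside the class $\bar{\mathcal{H}}_\infty^{BMO}$ required in \cite{Delbaen-Tang2010}; one cannot simply quote their theorem. Getting around this is exactly what the scalar-times-identity structure of $\beta^iI_n$ is for: because it commutes, its effect can be tracked through the trace bracket above and its exponential moments bounded through the combined BMO assumption, so that the drift $A$ and the $\beta$-terms contribute only a finite multiplicative constant rather than entering the reverse H\"older exponent. The delicate part is then to balance the H\"older split so that the reverse H\"older constraint (exponent below $\bar p$) and the John--Nirenberg integrability constraint are met simultaneously on the whole range $p\in(1,\bar p)$.
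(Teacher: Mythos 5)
Your overall skeleton is the same as the paper's: represent $|X_sX_t^{-1}|$ as a scalar Dol\'eans exponential times a drift exponential, control the exponential factor by Doob's inequality plus the reverse H\"older inequality (\ref{Rp-cd}) for exponents below $\bar p$, and control the drift factor by John--Nirenberg (the paper runs this column by column via It\^o on $\ln|(X_s^t)^i|^2$, and settles well-posedness by citing Gal'chuk rather than your factorization $X=N\hat X$, whose reduced equation still has an unbounded random drift and so still needs such a theorem). The genuine gap is in your treatment of the drift factor. You claim it has the form $\exp\{\lambda\int_t^T(|A_r|+\sum_i(\beta_r^i)^2)\,dr\}$ and that John--Nirenberg (\ref{J-N ineq}) together with the energy inequality (\ref{energy-ineq}) bounds it. That fails in general: both tools give exponential integrability of $\int_t^T(\beta_r^i)^2dr=\langle\beta^i\cdot W^i\rangle_T-\langle\beta^i\cdot W^i\rangle_t$ \emph{only} for coefficients $\delta<\|\beta^i\cdot W^i\|_{BMO_2}^{-2}$, while your coefficient $\lambda$ is dictated by $p$, the It\^o computation and the H\"older conjugate exponent $q'>1$, with no mechanism to shrink it. When the BMO norms of the $\beta^i$ are large, the proposed bound on the second factor is simply false, for every $p>1$; no ``balancing of the H\"older split'' can repair a factor that is not integrable.

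What the paper's proof exploits, and what is missing from yours, is a cancellation of exactly these quadratic terms: because $\beta^jI_n$ acts as a scalar in every direction, the martingale coefficient of $\ln|(X^t)^i|$ is $\phi_2^{ij}=\beta^j+\langle C^jv,v\rangle/|v|^2$ (with $v=(X^t)^i$), and the surviving It\^o drift is $\phi_3^{ij}-|\phi_2^{ij}|^2=|C^jv|^2/|v|^2-\bigl(\langle C^jv,v\rangle/|v|^2\bigr)^2\in[0,\|C^j\|_\infty^2]$: the $(\beta^j)^2$ and cross terms produced by the diffusion are offset by the compensator of the exponential being factored out (the same cancellation occurs if you work with the full Frobenius norm, so your choice of $|X_s^t|^2$ over columns is not the problem). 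The leftover drift is therefore at most $|A_r|$ plus bounded terms --- the paper records the linear bound $|A_r|+C(1+\sum_j|\beta_r^j|)$ --- and only for a \emph{linear} expression in the unbounded processes can one write $x\le\delta^{-1}+\tfrac{\delta}{4}x^2$ with $\delta$ arbitrarily small, push the coefficient of the square below the John--Nirenberg threshold of $(|A|+\sum_i|\beta^i|)\cdot W^1$, and conclude as in (\ref{Et-sup-Gamma-i-Pinte-2}). Relatedly, your appeal to (\ref{Rp-cd}) ``for $\mathcal{N}$'' needs this same normalization: (\ref{Rp-cd}) is a statement about scalar Dol\'eans exponentials, and $X^t$ is not the exponential of the matrix martingale $\mathcal{N}$ (the matrices do not commute); the scalar martingale to which it applies is $\sum_j\phi_2^{ij}\cdot W^j$, whose BMO norm is bounded by $\Psi(\bar p)$ in (\ref{Phi-ij-2-W-bound}). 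Your trace-bracket computation of $\|\mathcal{N}\|_{BMO_2}$ correctly identifies the constant, but the argument must be routed through these normalized scalar exponentials, at which point the cancellation above is exactly what makes the rest of your plan go through.
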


\begin{proof}
Since the coefficients satisfy the conditions in the basic
theorem in \cite{Gal'Chuk1978} on pp. 756-757 to the underlying
semi-martingale $((%
\overbrace{1,1,\cdot\cdot\cdot,1}^{d}%
)_{{}}^{\intercal}t+W_{t})_{t\in\lbrack0,T]}^{{}}$ (see also Lemma 7.1 in
\cite{Tang03}), (\ref{multi-linear-SDE}) has a unique strong
solution. For any $t\in\lbrack0,T]$, set $D_{t}^{i}=\beta_{t}^{i}I_{n}%
+C_{t}^{i}$ for $i=1,2,\ldots,d$. Consider the following matrix-valued SDE:%
\begin{equation}
\Lambda_{t}=I_{n}+%
{\displaystyle\int_{0}^{t}}
\Lambda_{s}\left[  -A_{s}+\sum_{i=1}^{d}\left(  D_{s}^{i}\right)  ^{2}\right]
ds-\sum_{i=1}^{d}%
{\displaystyle\int_{0}^{t}}
\Lambda_{s}D_{s}^{i}dW_{s}^{i},\text{ \ }t\in\lbrack
0,T].\label{multi-linear-SDE-inverse}%
\end{equation}
Similar to (\ref{multi-linear-SDE}), the above
SDE has a unique strong solution $\Lambda$. By applying It\^{o}'s formula to
$\Lambda_{t}X_{t}$, we obtain $\Lambda_{t}X_{t}=I_{n}$ which
implies $X_{t}^{-1}=\Lambda_{t}$. For each fixed $t\in\lbrack0,T]$, set
$X_{s}^{t}=X_{s}X_{t}^{-1}$ for $s\in\lbrack t,T]$, then
$X^{t}$ satisfies the following SDE:%
\begin{equation}
X_{s}^{t}=I_{n}+%
{\displaystyle\int_{t}^{s}}
A_{r}X_{r}^{t}dr+\sum_{i=1}^{d}%
{\displaystyle\int_{t}^{s}}
D_{r}^{i}X_{r}^{t}dW_{r}^{i}.\label{multi-dynamic-SDE}%
\end{equation}
Denote by $\left(  X_{{}}^{t}\right)  ^{i}$ the $i$th column of $X_{{}}^{t}$
for $i=1,2,\ldots,n$. Obviously,%
\begin{equation}
(X_{s}^{t})^{i}=e_{n}^{i}+%
{\displaystyle\int_{t}^{s}}
A_{r}(X_{r}^{t})^{i}dr+\sum_{j=1}^{d}%
{\displaystyle\int_{t}^{s}}
D_{r}^{j}(X_{r}^{t})^{i}dW_{r}^{j},\label{multi-dynamic-SDE-i}%
\end{equation}
where $e_{n}^{i}$ is the $i$th column of $I_{n}$. Since $X_{s}^{t}$ is
invertible, we have $\left\vert \left(  X_{s}^{t}\right)  ^{i}\right\vert
^{2}>0$ for $s\in\lbrack t,T]$.

Applying It\^{o}'s formula to $\ln\left(  \left\vert \left(  X_{r}^{t}\right)
^{i}\right\vert ^{2}\right)  $ on $[t,s]$, we get%
\[%
\begin{array}
[c]{rl}%
\ln\left(  \left\vert \left(  X_{s}^{t}\right)  ^{i}\right\vert ^{2}\right)
= & 2%
{\displaystyle\int_{t}^{s}}
\frac{\left\langle A_{r}\left(  X_{r}^{t}\right)  ^{i},\left(  X_{r}%
^{t}\right)  ^{i}\right\rangle }{\left\vert \left(  X_{r}^{t}\right)
^{i}\right\vert ^{2}}dr+2\sum\limits_{j=1}^{d}%
{\displaystyle\int_{t}^{s}}
\frac{\left\langle D_{r}^{j}\left(  X_{r}^{t}\right)  ^{i},\left(  X_{r}%
^{t}\right)  ^{i}\right\rangle }{\left\vert \left(  X_{r}^{t}\right)
^{i}\right\vert ^{2}}dW_{r}^{j}\\
& +%
{\displaystyle\int_{t}^{s}}
\sum\limits_{j=1}^{d}\frac{\left\vert D_{r}^{j}\left(  X_{r}^{t}\right)
^{i}\right\vert ^{2}}{\left\vert \left(  X_{r}^{t}\right)  ^{i}\right\vert
^{2}}dr-2%
{\displaystyle\int_{t}^{s}}
\sum\limits_{j=1}^{d}\left(  \frac{\left\langle D_{r}^{j}\left(  X_{r}%
^{t}\right)  ^{i},\left(  X_{r}^{t}\right)  ^{i}\right\rangle }{\left\vert
\left(  X_{r}^{t}\right)  ^{i}\right\vert ^{2}}\right)  ^{2}dr,
\end{array}
\]
which implies that%
\[
\left\vert \left(  X_{s}^{t}\right)  ^{i}\right\vert =\mathcal{E}\left(
\sum\limits_{j=1}^{d}\int_{t}^{s}\phi_{2}^{ij}(r)dW_{r}^{j}\right)
\exp\left\{  \int_{t}^{s}\phi_{1}^{i}(r)dr+\dfrac{1}{2}\int_{t}^{s}%
\sum\limits_{j=1}^{d}\left(  \phi_{3}^{ij}(r)-\left\vert \phi_{2}%
^{ij}(r)\right\vert ^{2}\right)  dr\right\}  ,
\]
where%
\[%
\begin{array}
[c]{ccc}%
\phi_{1}^{i}(r)=\frac{\left\langle A_{r}\left(  X_{r}^{t}\right)  ^{i},\left(
X_{r}^{t}\right)  ^{i}\right\rangle }{\left\vert \left(  X_{r}^{t}\right)
^{i}\right\vert ^{2}}, & \phi_{2}^{ij}(r)=\frac{\left\langle D_{r}^{j}\left(
X_{r}^{t}\right)  ^{i},\left(  X_{r}^{t}\right)  ^{i}\right\rangle
}{\left\vert \left(  X_{r}^{t}\right)  ^{i}\right\vert ^{2}}, & \phi_{3}%
^{ij}(r)=\frac{\left\vert D_{r}^{j}\left(  X_{r}^{t}\right)  ^{i}\right\vert
^{2}}{\left\vert \left(  X_{r}^{t}\right)  ^{i}\right\vert ^{2}}.
\end{array}
\]
By tedious calculation, we obtain
\begin{equation}
\left\Vert \sum\limits_{j=1}^{d}\left(  \phi_{2}^{ij}\cdot W^{j}\right)
\right\Vert _{BMO_{2}}\leq\left(  2\sum\limits_{j=1}^{d}\left(  n\left\Vert
\beta^{j}\cdot W^{j}\right\Vert _{BMO_{2}}^{2}+\left\Vert C_{{}}%
^{j}\right\Vert _{\infty}^{2}T\right)  \right)  ^{\frac{1}{2}}%
\label{Phi-ij-2-W-bound}%
\end{equation}
and%
\[
\left\vert \phi_{1}^{i}(r)\right\vert +\sum\limits_{j=1}^{d}\left\vert
\phi_{3}^{ij}(r)-\left\vert \phi_{2}^{ij}(r)\right\vert ^{2}\right\vert
\leq\left\vert A_{r}\right\vert +C\left(  1+\sum\limits_{j=1}^{d}\left\vert
\beta_{r}^{j}\right\vert \right)  ,
\]
where $C$ is a constant depending on $\sum_{j=1}^{d}\left\Vert C_{{}}%
^{j}\right\Vert _{\infty}$. For the simplicity of writing, we denote
$\mathbb{E}\left[  \cdot\mid\mathcal{F}_{t}\right]  $ by $\mathbb{E}%
_{t}\left[  \cdot\right]  $. Then, for any $p\in\left(  1,\bar{p}\right)  $,
by H\"{o}lder's inequality,
we have%
\begin{equation}%
\begin{array}
[c]{rl}
& \mathbb{E}_{t}\left[  \sup\limits_{s\in\lbrack t,T]}\left\vert \left(
X_{s}^{t}\right)  ^{i}\right\vert ^{p}\right]  \\
\leq & \mathbb{E}_{t}\left[  \sup\limits_{s\in\lbrack t,T]}\mathcal{E}\left(
\sum\limits_{j=1}^{d}%
{\displaystyle\int_{t}^{s}}
\phi_{2}^{ij}(r)dW_{r}^{j}\right)  ^{p}\exp\left\{  pC%
{\displaystyle\int_{t}^{T}}
\left(  1+\left\vert A_{r}\right\vert +\sum\limits_{j=1}^{d}\left\vert
\beta_{r}^{j}\right\vert \right)  dr\right\}  \right]  \\
\leq & \left(  \mathbb{E}_{t}\left[  \sup\limits_{s\in\lbrack t,T]}%
\mathcal{E}\left(  \sum\limits_{j=1}^{d}%
{\displaystyle\int_{t}^{s}}
\phi_{2}^{ij}(r)dW_{r}^{j}\right)  ^{{p}p^{\prime}}\right]  \right)
^{\frac{1}{p^{\prime}}}\\
& \times\left(  \mathbb{E}_{t}\left[  \exp\left\{  pq^{\prime}C%
{\displaystyle\int_{t}^{T}}
\left(  1+\left\vert A_{r}\right\vert +\sum\limits_{j=1}^{d}\left\vert
\beta_{r}^{j}\right\vert \right)  dr\right\}  \right]  \right)  ^{\frac
{1}{q^{^{\prime}}}},
\end{array}
\label{supXts-p}%
\end{equation}
where $p^{\prime}=(p+\bar{p})(2p)^{-1}$ and $q^{\prime}=p^{\prime}(p^{\prime
}-1)^{-1}$.
Thanks to Doob's inequality, reverse H\"{o}lder's inequality and
John-Nirenberg's inequality, we obtain%
\begin{equation}%
\begin{array}
[c]{rl}%
\mathbb{E}_{t}\left[  \sup\limits_{s\in\lbrack t,T]}\mathcal{E}\left(
\sum\limits_{j=1}^{d}%
{\displaystyle\int_{t}^{s}}
\phi_{2}^{ij}(r)dW_{r}^{j}\right)  ^{pp^{\prime}}\right]   & \leq
C\mathbb{E}_{t}\left[  \mathcal{E}\left(  \sum\limits_{j=1}^{d}%
{\displaystyle\int_{t}^{T}}
\phi_{2}^{ij}(r)dW_{r}^{j}\right)  ^{pp^{\prime}}\right]  \\
& \leq CK\left(  pp^{\prime},\left\Vert \sum\limits_{j=1}^{d}\left(  \phi
_{2}^{ij}\cdot W^{j}\right)  \right\Vert _{BMO_{2}}\right)
\end{array}
\label{Et-sup-Gamma-i-Pinte-1}%
\end{equation}
and%
\begin{equation}%
\begin{array}
[c]{l}%
\mathbb{E}_{t}\left[  \exp\left\{  pq^{\prime}C%
{\displaystyle\int_{t}^{T}}
\left(  1+\left\vert A_{r}\right\vert +\sum\limits_{j=1}^{d}\left\vert
\beta_{r}^{j}\right\vert \right)  dr\right\}  \right]  \\
\leq e_{{}}^{pq^{\prime}CT\left(  1+\delta^{-1}\right)  }\mathbb{E}_{t}\left[
\exp\left\{  \frac{pq^{\prime}C\delta}{4}%
{\displaystyle\int_{t}^{T}}
\left(  \left\vert A_{r}\right\vert +\sum\limits_{j=1}^{d}\left\vert \beta
_{r}^{j}\right\vert \right)  ^{2}dr\right\}  \right]  \\
\leq e_{{}}^{pq^{\prime}CT\left(  1+\delta^{-1}\right)  }\left(
1-\frac{pq^{\prime}C\delta}{4}\left\Vert \left(  \left\vert A\right\vert
+\sum\limits_{j=1}^{d}\left\vert \beta^{j}\right\vert \right)  \cdot
W^{1}\right\Vert _{BMO_{2}}^{2}\right)  ^{-1}\\
\leq2e_{{}}^{pq^{\prime}CT\left(  1+\delta^{-1}\right)  },
\end{array}
\label{Et-sup-Gamma-i-Pinte-2}%
\end{equation}
where $\delta=2\left(  pq^{\prime}C\left\Vert \left(  \left\vert A\right\vert
+\sum_{j=1}^{d}\left\vert \beta^{j}\right\vert \right)  \cdot W^{1}\right\Vert
_{BMO_{2}}^{2}\right)  ^{-1}$.
Then, due to the inequalities (\ref{supXts-p})-(\ref{Et-sup-Gamma-i-Pinte-2}), we have $\mathbb{E}_{t}\left[  \sup_{s\in\lbrack t,T]}\left\vert \left(  X_{s}%
^{t}\right)  ^{i}\right\vert ^{p}\right]  \leq C,$
which implies%
\[
\mathbb{E}_{t}\left[  \sup\limits_{s\in\lbrack t,T]}\left\vert X_{s}%
^{t}\right\vert ^{p}\right]  \leq\mathbb{E}_{t}\left[  \left(  \sup
\limits_{s\in\lbrack t,T]}\sum\limits_{i=1}^{n}\left\vert \left(  X_{s}%
^{t}\right)  ^{i}\right\vert ^{2}\right)  ^{\frac{p}{2}}\right]  \leq
C\sum\limits_{i=1}^{n}\mathbb{E}_{t}\left[  \sup\limits_{s\in\lbrack
t,T]}\left\vert \left(  X_{s}^{t}\right)  ^{i}\right\vert ^{p}\right]  \leq C,
\]
where $C$ depends on $p,$ $T,$ $\left\Vert \left(  \left\vert A\right\vert
+\sum_{i=1}^{d}\left\vert \beta^{i}\right\vert \right)  \cdot W^{1}\right\Vert
_{BMO_{2}}$ and $\sum_{i=1}^{d}\left\Vert C_{{}}^{i}\right\Vert _{\infty}$.
\end{proof}

\begin{proposition}
\label{prop-multi-linear-BSDE} Let the assumption in Proposition
\ref{prop-multi-reverseHolder} hold, and $\left(  \xi,f\right)  \in L_{\mathcal{F}_{T}}^{p_{0}^{{}}}(\Omega;\mathbb{R}_{{}}^{n})\times
\mathcal{M}_{\mathcal{F}}^{1,p_{0}^{{}}}([0,T];\mathbb{R}_{{}}^{n})$ for some
$p_{0}^{{}}>\bar{p}^{\ast}$, where $\bar{p}$ is defined in Proposition
\ref{prop-multi-reverseHolder} and $\bar{p}^{\ast}=\bar{p}(\bar{p}-1)^{-1}$.
Then, for any $p\in\left(  \bar{p}^{\ast},p_{0}^{{}}\right)  $, the BSDE
(\ref{multi-linear-BSDE}) admits a unique solution $\left(  Y,Z\right)
\in\mathcal{S}_{\mathcal{F}}^{p}([0,T];\mathbb{R}_{{}}^{n})\times
\mathcal{M}_{\mathcal{F}}^{2,p}([0,T];\mathbb{R}_{{}}^{n\times d})$. Moreover,
we have%
\begin{equation}%
\begin{array}
[c]{rl}%
\mathbb{E}\left[  \sup\limits_{t\in\lbrack0,T]}\left\vert Y_{t}\right\vert
^{p}+\left(  \int_{0}^{T}\left\vert Z_{t}\right\vert ^{2}dt\right)  ^{\frac
{p}{2}}\right]  \leq & C\left(  \mathbb{E}\left[  \left\vert \xi\right\vert
^{p_{0}^{{}}}+\left(  \int_{0}^{T}\left\vert f_{t}\right\vert dt\right)
^{p_{0}^{{}}}\right]  \right)  ^{\frac{p}{p_{0}^{{}}}},
\end{array}
\label{est-multi-BSDE-YZ}%
\end{equation}
where $C$ depends on $p,$ $p_{0}^{{}},$ $T,$ $\left\Vert
\left(  \left\vert A\right\vert +\sum_{i=1}^{d}\left\vert \beta^{i}\right\vert
\right)  \cdot W^{1}\right\Vert _{BMO_{2}}$ and $\sum_{i=1}^{d}\left\Vert
C_{{}}^{i}\right\Vert _{\infty}$.
\end{proposition}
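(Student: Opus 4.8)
The plan is to solve the multi-dimensional linear BSDE (\ref{multi-linear-BSDE}) by the classical ``variation-of-constants'' device, using the fundamental solution matrix $X$ of the associated SDE (\ref{multi-linear-SDE}) from Proposition \ref{prop-multi-reverseHolder} to transform the BSDE into one whose solution is given by an explicit conditional-expectation formula. First I would observe that the drift term $A_s^{\intercal}Y_s + \sum_{i=1}^d(\beta_s^i I_n + C_s^i)^{\intercal}Z_s^i$ is exactly the transpose-dual of the coefficients appearing in (\ref{multi-linear-SDE}). Writing $D_s^i = \beta_s^i I_n + C_s^i$ as in the previous proof, the idea is to compute, via It\^{o}'s formula, the differential of $X_t^{\intercal} Y_t$. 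Because $X$ solves $dX_t = A_t X_t\,dt + \sum_i D_t^i X_t\,dW_t^i$ and $Y$ solves the BSDE with generator built from $A^{\intercal}$ and $(D^i)^{\intercal}$, the $Y_s$-drift and the quadratic cross-variation terms involving $D^i$ should cancel against the corresponding terms from $X$, leaving only the inhomogeneous part. I expect the resulting identity to read
\[
d\left(X_t^{\intercal}Y_t\right) = X_t^{\intercal} f_t\,dt + \text{(martingale terms)},
\]
so that integrating from $t$ to $T$ and taking $\mathbb{E}_t[\cdot]$ yields the representation
\[
Y_t = \mathbb{E}_t\!\left[ (X_t^{\intercal})^{-1} X_T^{\intercal}\,\xi + \int_t^T (X_t^{\intercal})^{-1} X_s^{\intercal}\, f_s\,ds \right] = \mathbb{E}_t\!\left[ (X_s^t)^{\intercal}\big|_{s=T}\,\xi + \int_t^T (X_s^t)^{\intercal} f_s\,ds \right],
\]
where $X_s^t = X_s X_t^{-1}$ is precisely the object controlled by Proposition \ref{prop-multi-reverseHolder}.

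Granting this representation, the estimate (\ref{est-multi-BSDE-YZ}) for $\sup_t|Y_t|^p$ follows by the same H\"{o}lder/Doob/reverse-H\"{o}lder scheme used in Proposition \ref{est-exp-yz-prop}: I would split the exponent as $1/p = 1/p^{\ast} + 1/p$ (with $p^{\ast}$ conjugate), apply H\"{o}lder inside the conditional expectation to separate $\sup_{s}|X_s^t|$ from $|\xi|$ and $\int|f_s|\,ds$, and use that $\mathbb{E}_t[\sup_{s\in[t,T]}|X_s^t|^{p}]\le C$ from Proposition \ref{prop-multi-reverseHolder} (valid since $p<\bar p$, equivalently the data exponent $p_0>\bar p^{\ast}$ leaves room). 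The condition $p_0 > \bar p^{\ast}$ is exactly what guarantees that the conjugate exponent needed to absorb the $X$-factor stays below $\bar p$, so the reverse-H\"{o}lder bound applies. To pass from the conditional estimate to the unconditional $L^{p/p_0}$-bound on the resulting martingale and then back up to $\sup_t$, I would invoke Doob's maximal inequality exactly as in (\ref{est-exp-y}).

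The $Z$-part of the estimate I would handle by applying It\^{o}'s formula to $|Y_t|^2$ (or to an appropriately weighted $|Y|^2$), producing $\int_0^T |Z_t|^2\,dt$ on one side against terms controlled by $\sup_t|Y_t|$, the BMO-norms of $A$ and the $\beta^i$, and the bound $\sum_i\|C^i\|_\infty$; the cross terms $\langle Y, \mu Z\rangle$-type products are dominated by Young's inequality with a small parameter $\delta$ to absorb the $\int|Z|^2$ back to the left, the energy inequality handling the $\int|\mu|^2$-factors. Combined with the already-established $\sup_t|Y_t|$ bound and the B-D-G inequality, this yields the $(\int_0^T|Z_t|^2dt)^{p/2}$ control. \textbf{Uniqueness} follows because the difference of two solutions solves the homogeneous equation ($\xi=0$, $f=0$), whose representation forces $Y\equiv 0$ and hence $Z\equiv 0$. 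The main obstacle I anticipate is verifying rigorously that the stochastic integrals in the It\^{o} computation of $X^{\intercal}Y$ are genuine martingales (so that the conditional expectation kills them), since $X$, $Y$, and $Z$ are all only $L^p$-integrable rather than bounded; this requires checking integrability of the quadratic-variation terms via the reverse-H\"{o}lder inequality together with the a priori bounds $(Y,Z)\in\mathcal{S}^{\beta}\times\mathcal{M}^{2,\beta}$ for the relevant range of $\beta$, and justifying the component-wise reduction that underlies the existence part. Establishing existence itself may also require a fixed-point or approximation argument that produces a solution a priori in the right spaces before the representation formula can be applied to identify and estimate it.
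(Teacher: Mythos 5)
Your proposal follows the paper's own route almost exactly: the representation $Y_t=\mathbb{E}_t\bigl[(X_T^t)^{\intercal}\xi+\int_t^T (X_s^t)^{\intercal}f_s\,ds\bigr]$ via the fundamental solution of (\ref{multi-linear-SDE}) and its inverse, the H\"{o}lder/Doob/reverse-H\"{o}lder scheme for $\sup_t|Y_t|^p$ (using exactly the observation that $p>\bar p^{\ast}$ forces the conjugate exponent $p^{\ast}<\bar p$, so Proposition \ref{prop-multi-reverseHolder} applies to the $X$-factor), the It\^{o}-on-$|Y|^2$ plus Young/energy-inequality argument for the $Z$-estimate, and uniqueness through the representation formula. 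Two small points: in your It\^{o} computation the drift of $X_t^{\intercal}Y_t$ is $-X_t^{\intercal}f_t\,dt$, not $+X_t^{\intercal}f_t\,dt$ (your final representation is nevertheless the correct one), and for the $Z$-estimate the paper localizes with stopping times $\tau_m=\inf\{t:\int_0^t|\tilde Z_s|^2ds\ge m\}\wedge T$ and passes to the limit by Fatou, which is the clean way to handle the integrability issues you worry about.

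The genuine gap is in the existence step. You correctly sense the circularity — the representation formula is derived by applying It\^{o}'s formula to $X^{\intercal}Y$ for an \emph{already existing} solution — but your proposed remedy (``a fixed-point or approximation argument'') is both unnecessary and problematic: the coefficients here are unbounded (only BMO), so a standard Lipschitz/Picard contraction does not apply, which is precisely why this proposition cannot be quoted from classical theory. The missing idea is the martingale representation theorem. The paper \emph{defines} $\tilde Y$ by the conditional-expectation formula, notes that $X_t^{\intercal}\tilde Y_t+\int_0^t X_s^{\intercal}f_s\,ds$ is a martingale and represents it as $\mathbb{E}\bigl[X_T^{\intercal}\xi+\int_0^T X_s^{\intercal}f_s\,ds\bigr]+\sum_{i=1}^d\int_0^t\psi_s^i\,dW_s^i$ with $\psi^i\in\mathcal{M}_{\mathcal{F}}^{2,p}$, and then \emph{constructs} the second component by
\begin{equation*}
\tilde Z_t^i=\left(X_t^{\intercal}\right)^{-1}\psi_t^i-\left(\beta_t^i I_n+C_t^i\right)^{\intercal}\tilde Y_t ,
\end{equation*}
verifying by It\^{o}'s formula applied to $\Lambda^{\intercal}(X^{\intercal}\tilde Y)$ (with $\Lambda=X^{-1}$ solving (\ref{multi-linear-SDE-inverse})) that $(\tilde Y,\tilde Z)$ solves (\ref{multi-linear-BSDE}). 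This closes the loop with no fixed point and no approximation; without this step your existence argument is incomplete.
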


\begin{proof}
We first prove the existence. Set%
\[
\tilde{Y}_{t}:=\mathbb{E}\left[  \Lambda_{t}^{\intercal}X_{T}^{\intercal}%
\xi+\int_{t}^{T}\Lambda_{t}^{\intercal}X_{s}^{\intercal}f_{s}ds\mid
\mathcal{F}_{t}\right]  \text{ for }t\in\lbrack0,T],
\]
where $X$ (resp. $\Lambda$) is the unique strong solution to the SDE
(\ref{multi-linear-SDE}) (resp. (\ref{multi-linear-SDE-inverse})). Obviously,
$\tilde{Y}_{T}=\xi$. For any $p\in\left(  \bar{p}^{\ast},p_{0}^{{}}\right)  $,
by H\"{o}lder's inequality and Proposition \ref{prop-multi-reverseHolder}, we
have%
\begin{equation}%
\begin{array}
[c]{cl}%
\left\vert \tilde{Y}_{t}\right\vert  & \leq\mathbb{E}\left[  \sup
\limits_{s\in\lbrack t,T]}\left\vert \Lambda_{t}^{\intercal}X_{s}^{\intercal
}\right\vert \left(  \left\vert \xi\right\vert +\int_{t}^{T}\left\vert
f_{s}\right\vert ds\right)  \mid\mathcal{F}_{t}\right]  \\
& \leq\left(  \mathbb{E}\left[  \sup\limits_{s\in\lbrack t,T]}\left\vert
X_{s}\Lambda_{t}\right\vert ^{p^{\ast}}\mid\mathcal{F}_{t}\right]  \right)
^{\frac{1}{p^{\ast}}}\left(  \mathbb{E}\left[  \left(  \left\vert
\xi\right\vert +\int_{t}^{T}\left\vert f_{s}\right\vert ds\right)  ^{p}%
\mid\mathcal{F}_{t}\right]  \right)  ^{\frac{1}{p}}\\
& \leq C\left(  \mathbb{E}\left[  \left(  \left\vert \xi\right\vert +\int%
_{0}^{T}\left\vert f_{s}\right\vert ds\right)  ^{p}\mid\mathcal{F}_{t}\right]
\right)  ^{\frac{1}{p}},
\end{array}
\label{multi-Y-absolute}%
\end{equation}
where $p^{\ast}=p(p-1)^{-1}$, and $C$ depends on $p,$ $T,$ $\left\Vert \left(
\left\vert A\right\vert +\sum_{i=1}^{d}\left\vert \beta^{i}\right\vert
\right)  \cdot W^{1}\right\Vert _{BMO_{2}}$ and $\sum_{i=1}^{d}\left\Vert
C_{{}}^{i}\right\Vert _{\infty}$. Thanks to the Doob inequality, we obtain%
\begin{equation}
\left(  \mathbb{E}\left[  \sup_{t\in\lbrack0,T]}\left\vert \tilde{Y}%
_{t}\right\vert ^{p}\right]  \right)  ^{\frac{p_{0}^{{}}}{p}}\leq
C\mathbb{E}\left[  \left\vert \xi\right\vert ^{p_{0}^{{}}}+\left(  \int%
_{0}^{T}\left\vert f_{t}\right\vert dt\right)  ^{p_{0}^{{}}}\right]
,\label{multi-BSDE-est-Y}%
\end{equation}
which implies that $\tilde{Y}\in\mathcal{S}_{\mathcal{F}}^{p}([0,T];\mathbb{R}%
_{{}}^{n})$. Since%
\[
X_{t}^{\intercal}\tilde{Y}_{t}+\int_{0}^{t}X_{s}^{\intercal}f_{s}%
ds=\mathbb{E}\left[  X_{T}^{\intercal}\xi+\int_{0}^{T}X_{s}^{\intercal}%
f_{s}ds\mid\mathcal{F}_{t}\right]  ,
\]
by the martingale representation theorem, we get%
\[
X_{t}^{\intercal}\tilde{Y}_{t}+\int_{0}^{t}X_{s}^{\intercal}f_{s}%
ds=\mathbb{E}\left[  X_{T}^{\intercal}\xi+\int_{0}^{T}X_{s}^{\intercal}%
f_{s}ds\right]  +\sum_{i=1}^{d}\int_{0}^{t}\psi_{s}^{i}dW_{s}^{i},
\]
where $\psi_{{}}^{i}\in\mathcal{M}_{\mathcal{F}}^{2,p}([0,T];\mathbb{R}_{{}%
}^{n})$ for $i=1,2,\ldots,d$. Applying It\^{o}'s formula to $\Lambda
^{\intercal}(X^{\intercal}\tilde{Y})$ and noting that
$d(X_{t}^{\intercal}\tilde{Y}_{t})=-X_{t}^{\intercal}f_{t}dt+\sum_{i=1}%
^{d}\psi_{t}^{i}dW_{t}^{i}$, we get%
\[
\tilde{Y}_{t}=\xi+\int_{t}^{T}\left\{  A_{s}^{\intercal}\tilde{Y}_{s}%
+\sum\limits_{i=1}^{d}\left(  \beta_{s}^{i}I_{n}+C_{s}^{i}\right)
^{\intercal}\tilde{Z}_{s}^{i}+f_{s}\right\}  ds-\sum\limits_{i=1}^{d}\int%
_{t}^{T}\tilde{Z}_{s}^{i}dW_{s}^{i},
\]
where $\tilde{Z}_{t}^{i}=\left(  X_{t}^{\intercal}\right)  ^{-1}\psi_{t}%
^{i}-\left(  \beta_{t}^{i}I_{n}+C_{t}^{i}\right)  ^{\intercal}\tilde{Y}_{t}$
for $t\in\lbrack0,T]$, $i=1,2,\ldots,d$. Set $\tilde{Z}=\left(  \tilde{Z}_{{}%
}^{1},\tilde{Z}_{{}}^{2},\ldots,\tilde{Z}_{{}}^{d}\right)  $ and define
the stopping time
\[
\tau_{m}=\inf\left\{  t\in\lbrack0,T]:\int_{0}^{t}\left\vert \tilde{Z}%
_{s}\right\vert ^{2}ds\geq m\right\}  \wedge T
\]
for each $m\geq1$. Applying It\^{o}'s formula to $\left\vert \tilde{Y}%
_{t}\right\vert ^{2}$ on $[0,\tau_{m}]$, we get%
\[%
\begin{array}
[c]{cl}%
\left\vert \tilde{Y}_{0}\right\vert ^{2}= & \left\vert \tilde{Y}_{\tau_{m}%
}\right\vert ^{2}+2%
{\displaystyle\int_{0}^{\tau_{m}}}
\left\langle A_{t}\tilde{Y}_{t},\tilde{Y}_{t}\right\rangle dt+2%
{\displaystyle\int_{0}^{\tau_{m}}}
\sum\limits_{i=1}^{d}\left\langle D_{t}^{i}\tilde{Y}_{t},\tilde{Z}_{t}%
^{i}\right\rangle dt\\
& +2%
{\displaystyle\int_{0}^{\tau_{m}}}
\left\langle \tilde{Y}_{t},f_{t}\right\rangle dt-2\sum\limits_{i=1}^{d}%
{\displaystyle\int_{0}^{\tau_{m}}}
\left\langle \tilde{Y}_{t},\tilde{Z}_{t}^{i}\right\rangle dW_{t}^{i}-%
{\displaystyle\int_{0}^{\tau_{m}}}
\sum\limits_{i=1}^{d}\left\vert \tilde{Z}_{t}^{i}\right\vert ^{2}dt,
\end{array}
\]
where $D_{t}^{i}=\beta_{t}^{i}I_{n}+C_{t}^{i}$ for $i=1,2,\ldots,d$.
Similar to the proof of the inequality (\ref{est-exp-z}), we obtain%
\[
\mathbb{E}\left[  \left(
{\displaystyle\int_{0}^{\tau_{m}}}
\left\vert \tilde{Z}_{t}\right\vert ^{2}dt\right)  ^{\frac{p}{2}}\right]  \leq
C\left(  \mathbb{E}\left[  \left(  \sup\limits_{t\in\lbrack0,T]}\left\vert
\tilde{Y}_{t}\right\vert ^{p}\right)  ^{\frac{p_{0}^{{}}}{p}}+\left(  \int%
_{0}^{T}\left\vert f_{t}\right\vert dt\right)  ^{p_{0}^{{}}}\right]  \right)
^{\frac{p}{p_{0}^{{}}}},
\]
where $C$ depends on $p,$ $p_{0}^{{}},$ $T,$ $\left\Vert \left(  \left\vert
A\right\vert +\sum_{i=1}^{d}\left\vert \beta^{i}\right\vert \right)  \cdot
W^{1}\right\Vert _{BMO_{2}}$ and $\sum_{i=1}^{d}\left\Vert C_{{}}%
^{i}\right\Vert _{\infty}$. By taking $m\rightarrow\infty$, it follows from
Fatou's lemma that%
\begin{equation}
\mathbb{E}\left[  \left(
{\displaystyle\int_{0}^{T}}
\left\vert \tilde{Z}_{t}\right\vert ^{2}dt\right)  ^{\frac{p}{2}}\right]  \leq
C\left(  \mathbb{E}\left[  \left(  \sup\limits_{t\in\lbrack0,T]}\left\vert
\tilde{Y}_{t}\right\vert ^{p}\right)  ^{\frac{p_{0}^{{}}}{p}}+\left(  \int%
_{0}^{T}\left\vert f_{t}\right\vert dt\right)  ^{p_{0}^{{}}}\right]  \right)
^{\frac{p}{p_{0}^{{}}}}.\label{multi-BSDE-est-Z}%
\end{equation}
Combining (\ref{multi-BSDE-est-Z}) with (\ref{multi-BSDE-est-Y}), we have%
\[
\mathbb{E}\left[  \sup_{t\in\lbrack0,T]}\left\vert \tilde{Y}_{t}\right\vert
^{p}+\left(  \int_{0}^{T}\left\vert \tilde{Z}_{t}\right\vert ^{2}dt\right)
^{\frac{p}{2}}\right]  \leq C\left(  \mathbb{E}\left[  \left\vert
\xi\right\vert ^{p_{0}^{{}}}+\left(  \int_{0}^{T}\left\vert f_{t}\right\vert
dt\right)  ^{p_{0}^{{}}}\right]  \right)  ^{\frac{p}{p_{0}^{{}}}}.
\]

Now we prove the uniqueness. Let $\left(  Y,Z\right)  \in\mathcal{S}%
_{\mathcal{F}}^{p}([0,T];\mathbb{R}_{{}}^{n})\times\mathcal{M}_{\mathcal{F}%
}^{2,p}([0,T];\mathbb{R}_{{}}^{n\times d})$ be a solution to
(\ref{multi-linear-BSDE}). Applying It\^{o}'s formula to $X_{t}^{\intercal}Y$
and taking conditional expectation, we get $Y=\tilde{Y}$. From the
estimate (\ref{multi-BSDE-est-Z}) for $(Y-\tilde{Y},Z-\tilde{Z})$, we obtain
$Z=\tilde{Z}$.
\end{proof}

\subsection{The first and second-order variational equations}

Let $\bar{u}(\cdot)$ be optimal and $(\bar{X}(\cdot),\bar{Y}(\cdot),\bar
{Z}(\cdot))$ be the corresponding state trajectories of (\ref{state-eq}).
Since the control domain is not necessarily convex, we resort to spike
variation method. For any $u(\cdot)\in\mathcal{U}[0,T]$ and any fixed
$t_{0}\in\lbrack0,T)$, define
\[
u_{t}^{\varepsilon}=\left\{
\begin{array}
[c]{lll}%
\bar{u}_{t}, & \ t\in\lbrack0,T]\backslash E_{\varepsilon}, & \\
u_{t}, & \ t\in E_{\varepsilon}, &
\end{array}
\right.
\]
where $E_{\varepsilon}:=[t_{0},t_{0}+\varepsilon]\subset\lbrack0,T]$ with a
sufficiently small $\varepsilon>0$. Let $(X_{{}}^{\varepsilon}(\cdot),Y_{{}%
}^{\varepsilon}(\cdot),Z_{{}}^{\varepsilon}(\cdot))$ be the state trajectories of
(\ref{state-eq}) associated with $u^{\varepsilon}(\cdot)$. For $i=1,2,\ldots
,d$, set%
\[%
\begin{array}
[c]{cc}%
b_{x}(\cdot)=\left(
\begin{array}
[c]{ccc}%
b_{x_{1}}^{1}(\cdot) & \cdots & b_{x_{n}}^{1}(\cdot)\\
\vdots & \ddots & \vdots\\
b_{x_{1}}^{n}(\cdot) & \cdots & b_{x_{n}}^{n}(\cdot)
\end{array}
\right)  , & \sigma_{x}^{i}(\cdot)=\left(
\begin{array}
[c]{ccc}%
\sigma_{x_{1}}^{1i}(\cdot) & \cdots & \sigma_{x_{n}}^{1i}(\cdot)\\
\vdots & \ddots & \vdots\\
\sigma_{x_{1}}^{ni}(\cdot) & \cdots & \sigma_{x_{n}}^{ni}(\cdot)
\end{array}
\right)  .
\end{array}
\]
For simplicity, for $\psi=b$, $\sigma$, $f$, $\Phi$ and $w=x$, $y$, $z$,
denote%
\[%
\begin{array}
[c]{ll}%
\psi(t)=\psi(t,\bar{X}_{t},\bar{Y}_{t},\bar{Z}_{t},\bar{u}_{t}); & \psi
_{w}(t)=\psi_{w}(t,\bar{X}_{t},\bar{Y}_{t},\bar{Z}_{t},\bar{u}_{t});\\
\hat{\psi}(t)=\psi(t,\bar{X}_{t},\bar{Y}_{t},\bar{Z}_{t},u_{t})-\psi(t); &
\hat{\psi}_{w}(t)=\psi_{w}(t,\bar{X}_{t},\bar{Y}_{t},\bar{Z}_{t},u_{t}%
)-\psi_{w}(t).
\end{array}
\]
Moreover, denote by $D^{2}f$ the Hessian matrix of $f$ with respect to $x$,
$y$, $z$.

The first and second-order variational equations for the SDE in (\ref{state-eq})
are
\begin{equation}
\left\{
\begin{array}
[c]{rl}%
dX_{1}(t)= & b_{x}(t)X_{1}(t)dt+%
{\displaystyle\sum\limits_{i=1}^{d}}
\left[  \sigma_{x}^{i}(t)X_{1}(t)+\hat{\sigma}^{i}(t)1_{E_{\varepsilon}%
}(t)\right]  dW_{t}^{i},\\
X_{1}(0)= & 0
\end{array}
\right.  \label{new-form-x1}%
\end{equation}
and%
\begin{equation}
\left\{
\begin{array}
[c]{rl}%
dX_{2}(t)= & \left[  b_{x}(t)X_{2}(t)+\hat{b}(t)1_{E_{\varepsilon}}%
(t)+\dfrac{1}{2}b_{xx}(t)X_{1}(t)X_{1}(t)\right]  dt\\
& +\
{\displaystyle\sum\limits_{i=1}^{d}}
\left[  \sigma_{x}^{i}(t)X_{2}(t)+\hat{\sigma}_{x}^{i}(t)X_{1}%
(t)1_{E_{\varepsilon}}(t)+\dfrac{1}{2}\sigma_{xx}^{i}(t)X_{1}(t)X_{1}%
(t)\right]  dW_{t}^{i},\\
X_{2}(0)= & 0
\end{array}
\right.  \label{new-form-x2}%
\end{equation}
respectively, where
\[
\sigma_{xx}^{i}(t)X_{1}(t)X_{1}(t):=\left(  \mathrm{tr}\left\{  \sigma
_{xx}^{1i}(t)X_{1}(t)X_{1}^{\intercal}(t)\right\}  ,\ldots,\mathrm{tr}\left\{
\sigma_{xx}^{ni}(t)X_{1}(t)X_{1}^{\intercal}(t)\right\}  \right)  ^{\intercal}%
\]
for $i=1,2,\ldots,d$, and similarly for $b_{xx}(t)X_{1}(t)X_{1}(t)$. The
following results can be found in \cite{Peng90} (see also \cite{YongZhou}).

\begin{lemma}
\label{est-forward-expansion-lem} Suppose (i) and (ii) in Assumption
\ref{assum-2} hold. Then, for any $\beta>1$, (\ref{new-form-x1}) (resp.
(\ref{new-form-x2})) has a unique solution $X_{1}(\cdot)\in$ $\mathcal{S}%
_{\mathcal{F}}^{\beta}([0,T];\mathbb{R}^{n})$ (resp. $X_{2}(\cdot)\in$
$\mathcal{S}_{\mathcal{F}}^{\beta}([0,T];\mathbb{R}^{n})$). Moreover,%
\begin{equation}
\mathbb{E}\left[  \sup\limits_{t\in\lbrack0,T]}\left\vert X_{t}^{\varepsilon
}-\bar{X}_{t}\right\vert ^{\beta}\right]  =O\left(  \varepsilon^{\frac{\beta
}{2}}\right)  , \label{est-forward-expansion-1}%
\end{equation}%
\begin{equation}
\mathbb{E}\left[  \sup\limits_{t\in\lbrack0,T]}\left\vert X_{1}(t)\right\vert
^{\beta}\right]  =O\left(  \varepsilon^{\frac{\beta}{2}}\right)  ,
\label{est-forward-expansion-2}%
\end{equation}%
\begin{equation}
\mathbb{E}\left[  \sup\limits_{t\in\lbrack0,T]}\left\vert X_{t}^{\varepsilon
}-\bar{X}_{t}-X_{1}(t)\right\vert ^{\beta}\right]  =O\left(  \varepsilon
^{\beta}\right)  , \label{est-forward-expansion-3}%
\end{equation}%
\begin{equation}
\mathbb{E}\left[  \sup\limits_{t\in\lbrack0,T]}\left\vert X_{2}(t)\right\vert
^{\beta}\right]  =O\left(  \varepsilon^{\beta}\right)  ,
\label{est-forward-expansion-4}%
\end{equation}%
\begin{equation}
\mathbb{E}\left[  \sup\limits_{t\in\lbrack0,T]}\left\vert X_{t}^{\varepsilon
}-\bar{X}_{t}-X_{1}(t)-X_{2}(t)\right\vert ^{\beta}\right]  =o\left(
\varepsilon^{\beta}\right)  . \label{est-forward-expansion-5}%
\end{equation}

\end{lemma}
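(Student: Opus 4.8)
The plan is to treat \eqref{new-form-x1} and \eqref{new-form-x2} as linear SDEs with bounded coefficients and to read off every estimate from the standard Burkholder--Davis--Gundy (BDG) and Gronwall machinery, the only subtlety being the bookkeeping of powers of $\varepsilon$ generated by the spike set $E_\varepsilon$. First I would settle existence, uniqueness and membership in $\mathcal{S}_{\mathcal{F}}^\beta$. Since $b_x$ and $\sigma_x^i$ are bounded by Assumption \ref{assum-2}, equation \eqref{new-form-x1} is a linear SDE in $X_1$ whose inhomogeneous term is $\sum_i\hat\sigma^i(t)1_{E_\varepsilon}(t)\,dW_t^i$. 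Using $|\sigma(t,0,u)|\le L_1(1+|u|)$, the boundedness of $\sigma_x$, the bound $\bar X\in\mathcal{S}_{\mathcal{F}}^p$ for every $p$ and $u(\cdot)\in\mathcal{U}[0,T]$, one checks that $\hat\sigma^i\in\mathcal{M}_{\mathcal{F}}^p$ for every $p$, so the classical contraction/Gronwall argument yields a unique $X_1\in\mathcal{S}_{\mathcal{F}}^\beta$ for all $\beta>1$. Once $X_1$ is in hand, the forcing terms $\hat b\,1_{E_\varepsilon}$, $\hat\sigma_x^iX_11_{E_\varepsilon}$, $b_{xx}X_1X_1$ and $\sigma_{xx}^iX_1X_1$ of \eqref{new-form-x2} all lie in the relevant spaces (boundedness of the second derivatives together with $X_1\in\mathcal{S}_{\mathcal{F}}^p$ for all $p$), so the same argument produces a unique $X_2\in\mathcal{S}_{\mathcal{F}}^\beta$.

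The heart of the estimates is the elementary observation that, for a process $\phi$ bounded in every $\mathcal{M}_{\mathcal{F}}^p$, forcing through the drift on $E_\varepsilon$ costs a factor $\varepsilon$ while forcing through the diffusion on $E_\varepsilon$ costs only $\varepsilon^{1/2}$: indeed $\mathbb{E}[(\int_{E_\varepsilon}|\phi_s|\,ds)^\beta]=O(\varepsilon^\beta)$ by H\"older, whereas $\mathbb{E}[(\int_{E_\varepsilon}|\phi_s|^2\,ds)^{\beta/2}]=O(\varepsilon^{\beta/2})$ (H\"older in time when $\beta\ge2$, Jensen when $1<\beta<2$, together with $\mathbb{E}[\int_{E_\varepsilon}|\phi_s|^p\,ds]=O(\varepsilon)$). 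Applying BDG and Gronwall to \eqref{new-form-x1} gives \eqref{est-forward-expansion-2}, the diffusion spike $\hat\sigma^i1_{E_\varepsilon}$ being the dominant term; the same analysis applied to the SDE satisfied by $X^\varepsilon-\bar X$ (Lipschitz in the state part, a diffusion spike on $E_\varepsilon$) gives \eqref{est-forward-expansion-1}. For \eqref{est-forward-expansion-4} every forcing term of $X_2$ is of order $\varepsilon$ in this sense: the drift spike $\hat b\,1_{E_\varepsilon}$ and the quadratic terms $b_{xx}X_1X_1$ directly, and the diffusion spike $\hat\sigma_x^iX_11_{E_\varepsilon}$ because it combines the $\varepsilon^{1/2}$ from the spike length with the $\varepsilon^{1/2}$ already carried by $X_1$ through \eqref{est-forward-expansion-2}, so Gronwall yields $\mathbb{E}[\sup_t|X_2(t)|^\beta]=O(\varepsilon^\beta)$.

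Finally, for the remainder estimates I would set $\eta_1:=X^\varepsilon-\bar X-X_1$ and $\eta_2:=X^\varepsilon-\bar X-X_1-X_2$, each of which solves a linear SDE in itself with the same bounded coefficients $b_x$, $\sigma_x^i$ and a forcing term built from the Taylor remainders of $b$ and $\sigma$ expanded around $(\bar X,\bar u)$. For $\eta_1$ the first-order expansion leaves a remainder that is quadratic in $X^\varepsilon-\bar X$ plus a first-order spike contribution; invoking \eqref{est-forward-expansion-1} and the diffusion/drift dichotomy bounds the forcing by $O(\varepsilon)$, whence \eqref{est-forward-expansion-3} by Gronwall. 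The genuinely delicate point is \eqref{est-forward-expansion-5}: the forcing of $\eta_2$ contains the second-order Taylor remainder, schematically $\int_0^1(1-\theta)\big[D^2\psi(\bar X_t+\theta(X^\varepsilon_t-\bar X_t))-D^2\psi(\bar X_t)\big]\,d\theta$ contracted against $(X^\varepsilon_t-\bar X_t)^{\otimes 2}$ (for $\psi=b,\sigma$), and to upgrade its contribution from $O(\varepsilon^\beta)$ to $o(\varepsilon^\beta)$ one cannot merely use the uniform bounds on $D^2b$, $D^2\sigma$. I expect this to be the main obstacle: it requires a dominated-convergence argument exploiting the continuity of $D^2b$, $D^2\sigma$ in $x$ together with $\sup_t|X^\varepsilon_t-\bar X_t|\to0$ and the uniform $L^p$ control from \eqref{est-forward-expansion-1}, so that the bracketed difference tends to zero and, after normalising by $\varepsilon^\beta$, the remainder vanishes in the limit. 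Collecting the three forcing pieces and applying Gronwall one last time then yields \eqref{est-forward-expansion-5}.
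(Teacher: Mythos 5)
Your proof is correct and is essentially the argument behind the paper's treatment of this lemma: the paper gives no proof of its own, citing Peng (1990) and Yong--Zhou instead, and their proofs proceed exactly as you do --- linear-SDE well-posedness, BDG plus Gronwall with the $\varepsilon$-versus-$\varepsilon^{1/2}$ drift/diffusion bookkeeping on $E_\varepsilon$, and a continuity/dominated-convergence argument on the second-order Taylor remainder to upgrade $O(\varepsilon^{\beta})$ to $o(\varepsilon^{\beta})$. The one point to watch is that admissible controls here are unbounded (only $\mathcal{M}_{\mathcal{F}}^{p}$-integrable), so the uniform-in-time moment bounds implicit in your spike estimates $\mathbb{E}\bigl[\bigl(\int_{E_\varepsilon}|\phi_s|\,ds\bigr)^{\beta}\bigr]=O(\varepsilon^{\beta})$ hold exactly as in the cited references (which assume $|\psi(t,0,u)|\le L$ uniformly in $u$) only after reducing to Lebesgue points $t_{0}$ of the relevant time-integrable moment functions --- a caveat equally present in the paper's own citation and harmless for the $dt\otimes d\mathbb{P}$-a.e.\ maximum principle.
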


Set $\left(  \xi^{1,\varepsilon}(t),\eta^{1,\varepsilon}(t),\zeta
^{1,\varepsilon}(t)\right)  :=\left(  X_{t}^{\varepsilon}-\bar{X}_{t}%
,Y_{t}^{\varepsilon}-\bar{Y}_{t},Z_{t}^{\varepsilon}-\bar{Z}_{t}\right)
;\Theta_{t}:=(\bar{X}_{t},\bar{Y}_{t},\bar{Z}_{t}); $\\
$ \Theta_{t}^{\varepsilon
}:=(X_{t}^{\varepsilon},Y_{t}^{\varepsilon},Z_{t}^{\varepsilon}).$ We have%
\begin{equation}
\left\{
\begin{array}
[c]{rl}%
d\eta^{1,\varepsilon}(t)= & -\left[  \left(  \tilde{f}_{x}^{\varepsilon
}(t)\right)  ^{\intercal}\xi^{1,\varepsilon}(t)+\tilde{f}_{y}^{\varepsilon
}(t)\eta^{1,\varepsilon}(t)+\left(  \tilde{f}_{z}^{\varepsilon}(t)\right)
^{\intercal}\zeta^{1,\varepsilon}(t)+\hat{f}(t)1_{E_{\varepsilon}}(t)\right]
dt\\
& +\left(  \zeta^{1,\varepsilon}(t)\right)  ^{\intercal}dW_{t},\\
\eta^{1,\varepsilon}(T)= & \left(  \tilde{\Phi}_{x}^{\varepsilon}(T)\right)
^{\intercal}\xi^{1,\varepsilon}(T),
\end{array}
\right.  \label{ep-bar-y}%
\end{equation}
where $\tilde{f}_{z}^{\varepsilon}(t)=\int_{0}^{1}f_{z}(t,\Theta_{t}%
+\theta(\Theta_{t}^{\varepsilon}-\Theta_{t}),u_{t}^{\varepsilon})d\theta$.
$\tilde{f}_{x}^{\varepsilon}(t)$, $\tilde{f}_{y}^{\varepsilon}(t)$ and
$\tilde{\Phi}_{x}^{\varepsilon}(T)$ are defined similarly. The following lemma
is the estimate for $\left(  \eta^{1,\varepsilon}(\cdot),\zeta^{1,\varepsilon
}(\cdot)\right)  $.

\begin{lemma}
\label{est-epsilon-bar} Suppose that Assumption \ref{assum-2} holds. Then we have
\begin{equation}%
\begin{array}
[c]{rl}%
\mathbb{E}\left[  \sup\limits_{t\in\lbrack0,T]}\left\vert \eta^{1,\varepsilon
}(t)\right\vert ^{\beta}+\left(  \int_{0}^{T}\left\vert \zeta^{1,\varepsilon
}(t)\right\vert ^{2}dt\right)  ^{\frac{\beta}{2}}\right]  =O\left(
\varepsilon^{\frac{\beta}{2}}\right)  \text{ } & \text{for any }\beta
\in(1,+\infty).
\end{array}
\label{est-ep-yz}%
\end{equation}

\end{lemma}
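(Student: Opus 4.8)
The plan is to read the BSDE \eqref{ep-bar-y} for $(\eta^{1,\varepsilon},\zeta^{1,\varepsilon})$ as a one-dimensional linear BSDE of the form \eqref{linear-bsde} and then feed it into the estimates of Subsection 3.1. Concretely, I would set $\lambda_s=\tilde f_y^{\varepsilon}(s)$, $\mu_s=\tilde f_z^{\varepsilon}(s)$, terminal datum $\xi=(\tilde\Phi_x^{\varepsilon}(T))^{\intercal}\xi^{1,\varepsilon}(T)$ and free term $\varphi_s=(\tilde f_x^{\varepsilon}(s))^{\intercal}\xi^{1,\varepsilon}(s)+\hat f(s)1_{E_{\varepsilon}}(s)$, so that $(\eta^{1,\varepsilon},\zeta^{1,\varepsilon})$ is exactly the solution $(Y,Z)$. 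Before invoking Proposition \ref{est-linear-prop} I would verify its hypotheses \emph{uniformly in} $\varepsilon$: $\|\lambda\|_{\infty}\le\|f_y\|_{\infty}<\infty$ by Assumption \ref{assum-2}(iii); and since the growth bound on $f_z$ gives $|\tilde f_z^{\varepsilon}(s)|\le L_3+\gamma(|\bar Z_s|+|Z_s^{\varepsilon}|)$, Theorem \ref{state-eq-exist-th} yields $\|\mu\cdot W\|_{BMO_2}\le C(1+\|\bar Z\cdot W\|_{BMO_2}+\|Z^{\varepsilon}\cdot W\|_{BMO_2})\le C$ with $C$ independent of $\varepsilon$. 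This uniformity is exactly what keeps the constant in \eqref{est-linear-data} bounded as $\varepsilon\to0$.

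Next I would fix any $\beta>p_{\mu}^{\ast}$, choose $\beta_0\in(\beta,+\infty)$, and apply Proposition \ref{est-linear-prop} to dominate the left-hand side of \eqref{est-ep-yz} by $C\big(\mathbb{E}[\,|\xi|^{\beta_0}+(\int_0^T|\varphi_s|\,ds)^{\beta_0}\,]\big)^{\beta/\beta_0}$. The two data terms are then estimated separately. Because $\Phi_x$ and $f_x$ are bounded, $|\xi|\le C|\xi^{1,\varepsilon}(T)|$ and $\int_0^T|(\tilde f_x^{\varepsilon})^{\intercal}\xi^{1,\varepsilon}|\,ds\le CT\sup_s|\xi^{1,\varepsilon}(s)|$, so \eqref{est-forward-expansion-1} makes both contributions $O(\varepsilon^{\beta_0/2})$. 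For the spike term I would use $|\hat f(s)|\le L_2(1+|\bar Y_s|+|\bar Z_s|)\le C(1+|\bar Z_s|)$ (since $\|\bar Y\|_{\infty}\le C_2$) together with Cauchy--Schwarz on $E_{\varepsilon}$ to get $\int_{E_{\varepsilon}}|\hat f(s)|\,ds\le C\big(\varepsilon+\varepsilon^{1/2}(\int_0^T|\bar Z_s|^2ds)^{1/2}\big)$, whence $\mathbb{E}[(\int_{E_{\varepsilon}}|\hat f(s)|ds)^{\beta_0}]\le C(\varepsilon^{\beta_0}+\varepsilon^{\beta_0/2}\,\mathbb{E}[\langle\bar Z\cdot W\rangle_T^{\beta_0/2}])=O(\varepsilon^{\beta_0/2})$; here $\mathbb{E}[\langle\bar Z\cdot W\rangle_T^{\beta_0/2}]<\infty$ follows from the energy inequality \eqref{energy-ineq} as $\bar Z\cdot W\in BMO$. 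Thus the whole right-hand side is $O(\varepsilon^{\beta_0/2})$, and raising to the power $\beta/\beta_0$ proves \eqref{est-ep-yz} for every $\beta>p_{\mu}^{\ast}$.

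To cover the remaining small exponents, I would observe that $\Xi:=\sup_t|\eta^{1,\varepsilon}(t)|+(\int_0^T|\zeta^{1,\varepsilon}(t)|^2dt)^{1/2}$ satisfies $c_{\beta}\Xi^{\beta}\le\sup_t|\eta^{1,\varepsilon}(t)|^{\beta}+(\int_0^T|\zeta^{1,\varepsilon}(t)|^2dt)^{\beta/2}\le C_{\beta}\Xi^{\beta}$, so that \eqref{est-ep-yz} is equivalent to $\mathbb{E}[\Xi^{\beta}]=O(\varepsilon^{\beta/2})$. Having this for one large $\beta^{\ast}>p_{\mu}^{\ast}$, Jensen's (equivalently H\"older's) inequality gives $\mathbb{E}[\Xi^{\beta}]\le(\mathbb{E}[\Xi^{\beta^{\ast}}])^{\beta/\beta^{\ast}}=O(\varepsilon^{\beta/2})$ for every $\beta\in(1,\beta^{\ast}]$, which closes the range $\beta\in(1,+\infty)$. (For $\beta\in(1,2)$ one could instead apply Proposition \ref{est-exp-yz-prop} directly and then strip off the $\Gamma$-weights, but the monotonicity argument is shorter.)

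I expect the genuine obstacle to be the free term $\hat f(s)1_{E_{\varepsilon}}(s)$. Since $\bar Z$ is only square-integrable and not bounded, one \emph{cannot} bound $\int_{E_{\varepsilon}}|\bar Z_s|\,ds$ by $C\varepsilon$; the correct $\varepsilon^{1/2}$ rate has to be manufactured by pairing the Cauchy--Schwarz gain $\varepsilon^{1/2}$ on the short interval with the all-order moment bound on $\langle\bar Z\cdot W\rangle_T$ supplied by the $BMO$ property of $\bar Z\cdot W$. The secondary technical point is the uniform-in-$\varepsilon$ control of $\|\mu\cdot W\|_{BMO_2}$ noted in the first paragraph, which is indispensable: without it the constant $C$ in the estimate could degenerate as $\varepsilon\to0$ and the $O(\varepsilon^{\beta/2})$ conclusion would be lost.
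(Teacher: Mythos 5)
Your proposal is correct and follows essentially the same route as the paper's own proof: both identify \eqref{ep-bar-y} as an instance of the linear BSDE \eqref{linear-bsde}, apply Proposition \ref{est-linear-prop} with the constant kept uniform in $\varepsilon$ via the bound $|\tilde{f}_{z}^{\varepsilon}(t)|\leq L_{3}+\gamma(|\bar{Z}_{t}|+|Z_{t}^{\varepsilon}|)$ and the second inequality in \eqref{est-quad-qfbsde}, estimate the terminal and $f_{x}$ terms by \eqref{est-forward-expansion-1}, handle the spike term $\hat{f}(t)1_{E_{\varepsilon}}(t)$ by Cauchy--Schwarz on $E_{\varepsilon}$ combined with the energy inequality for $\bar{Z}\cdot W$, and finally extend to small $\beta$ by H\"{o}lder's inequality. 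The only (immaterial) difference is your choice of $\beta_{0}\in(\beta,+\infty)$ where the paper takes $\beta_{0}=\beta+1$.
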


\begin{proof}
For any $\beta>p_{\tilde{f}_{z}^{\varepsilon}}^{\ast}$, set $\beta_{0}%
=\beta+1$, it follows from Proposition \ref{est-linear-prop}, the estimate
(\ref{est-forward-expansion-1}) and the energy inequality for $\bar{Z}\cdot W$
that
\[%
\begin{array}
[c]{l}%
\mathbb{E}\left[  \sup\limits_{t\in\lbrack0,T]}\left\vert \eta^{1,\varepsilon
}(t)\right\vert ^{\beta}+\left(  \int_{0}^{T}\left\vert \zeta^{1,\varepsilon
}(t)\right\vert ^{2}dt\right)  ^{\frac{\beta}{2}}\right]  \\
\leq C\left(  \mathbb{E}\left[  \left\vert \left(  \tilde{\Phi}_{x}%
^{\varepsilon}(T)\right)  ^{\intercal}\xi^{1,\varepsilon}(T)\right\vert
^{\beta_{0}}+\left(  \int_{0}^{T}\left\vert \left(  \tilde{f}_{x}%
^{\varepsilon}(t)\right)  ^{\intercal}\xi^{1,\varepsilon}(t)+\hat
{f}(t)1_{E_{\varepsilon}}(t)\right\vert dt\right)  ^{\beta_{0}}\right]
\right)  ^{\frac{\beta}{\beta_{0}}}\\
\leq C\left\{  \left(  \mathbb{E}\left[  \sup\limits_{t\in\lbrack
0,T]}\left\vert \xi^{1,\varepsilon}(t)\right\vert ^{\beta_{0}}\right]
\right)  ^{\frac{\beta}{\beta_{0}}}+\left(  \mathbb{E}\left[  \left(
\int_{E_{\varepsilon}}\left\vert \hat{f}(t)\right\vert dt\right)  ^{\beta_{0}%
}\right]  \right)  ^{\frac{\beta}{\beta_{0}}}\right\}  \\
\leq C\varepsilon^{\frac{\beta}{2}}\left\{  1+\left(  \mathbb{E}\left[
\left(  \int_{0}^{T}\left(  1+\left\vert \bar{Z}_{t}\right\vert \right)
^{2}dt\right)  ^{\frac{\beta_{0}}{2}}\right]  \right)  ^{\frac{\beta}%
{\beta_{0}}}\right\}  \\
\leq C\varepsilon^{\frac{\beta}{2}},
\end{array}
\]
where $C$ depends on $\beta,$ $\left\Vert \Phi\right\Vert _{\infty},$
$\left\Vert f_{x}\right\Vert _{\infty},$ $\left\Vert f_{y}\right\Vert
_{\infty},$ $T$ and $\left\Vert \tilde{f}_{z}^{\varepsilon}\cdot W\right\Vert
_{BMO_{2}}$. Since $\left\vert \tilde{f}_{z}^{\varepsilon}(t)\right\vert \leq
L_{3}+\gamma\left(  \left\vert \bar{Z}_{t}\right\vert +\left\vert
Z_{t}^{\varepsilon}\right\vert \right)  $, by the second inequality in
(\ref{est-quad-qfbsde}), $C$ depends on $\beta,$ $\left\Vert \Phi\right\Vert
_{\infty},$ $\left\Vert f_{x}\right\Vert _{\infty},$ $\left\Vert
f_{y}\right\Vert _{\infty},$ $T,$ $\alpha,$ $L_{3}$ and $\gamma$. The case
where $\beta\in\left(  1,p_{\tilde{f}_{z}^{\varepsilon}}^{\ast}\right]  $
follows immediately from H\"{o}lder's inequality.
\end{proof}

In order to obtain the first-order variational equation of the BSDE in
(\ref{state-eq}), we introduce the first-order adjoint equation%
\begin{equation}%
\begin{array}
[c]{cc}%
p_{t}= & \Phi_{x}(\bar{X}_{T})+%
{\displaystyle\int_{t}^{T}}
\left\{  \left[  \sum\limits_{i=1}^{d}f_{z_{i}}(s)\left(  \sigma_{x}%
^{i}(s)\right)  ^{\intercal}+f_{y}(s)I_{n}+b_{x}^{\intercal}(s)\right]
p_{s}\right. \\
& +\left.  \sum\limits_{i=1}^{d}\left[  f_{z_{i}}(s)I_{n}+\left(  \sigma
_{x}^{i}(s)\right)  ^{\intercal}\right]  q_{s}^{i}+f_{x}(s)\right\}
ds-\sum\limits_{i=1}^{d}%
{\displaystyle\int_{t}^{T}}
q_{s}^{i}dW_{s}^{i}.
\end{array}
\label{eq-p}%
\end{equation}

\begin{lemma}
\label{adj-1st-lem} Suppose that Assumption \ref{assum-2} holds. Then there exists a unique solution
$\left(  p(\cdot),q(\cdot)\right)  \in$ \\
$L_{\mathcal{F}}^{\infty}([0,T];\mathbb{R}_{{}}^{n})\times\bigcap_{\beta
>1}\mathcal{M}_{\mathcal{F}}^{2,\beta}([0,T];\mathbb{R}_{{}}^{n\times d})$ to the equation (\ref{eq-p}),
where $q(\cdot)=\left(  q_{{}}^{1}(\cdot),q_{{}}^{2}(\cdot),\ldots,q_{{}}%
^{d}(\cdot)\right)  $.
\end{lemma}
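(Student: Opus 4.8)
The plan is to recognize the first-order adjoint equation (\ref{eq-p}) as a particular instance of the multi-dimensional linear BSDE (\ref{multi-linear-BSDE}) and to apply Proposition \ref{prop-multi-linear-BSDE}, after which the only genuinely new work is upgrading the integrability of $p(\cdot)$ from $\mathcal{S}^{r}$ to $L^{\infty}$. Matching (\ref{eq-p}) against (\ref{multi-linear-BSDE}), with $(p(\cdot),q(\cdot))$ in the role of $(Y,Z)$, I would read off the coefficients
\[
\xi=\Phi_{x}(\bar{X}_{T}),\qquad f_{s}=f_{x}(s),\qquad \beta_{s}^{i}=f_{z_{i}}(s),\qquad C_{s}^{i}=\sigma_{x}^{i}(s),
\]
\[
A_{s}=\sum_{i=1}^{d}f_{z_{i}}(s)\sigma_{x}^{i}(s)+f_{y}(s)I_{n}+b_{x}(s),\qquad i=1,2,\ldots,d.
\]

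Next I would verify the structural hypothesis of Proposition \ref{prop-multi-reverseHolder}. By Assumption \ref{assum-2}(ii), $C^{i}=\sigma_{x}^{i}$ is bounded, so $\sum_{i=1}^{d}\Vert C^{i}\Vert_{\infty}<\infty$. By Assumption \ref{assum-2}(iii) one has $|f_{z_{i}}(s)|\leq|f_{z}(s)|\leq L_{3}+\gamma|\bar{Z}_{s}|$, while $f_{y}$, $b_{x}$, $\sigma_{x}$ are bounded; hence $|A_{s}|+\sum_{i=1}^{d}|\beta_{s}^{i}|\leq C(1+|\bar{Z}_{s}|)$. Since $\bar{Z}\cdot W\in BMO$ by Theorem \ref{state-eq-exist-th}, this yields
\[
\left\Vert\left(|A|+\sum_{i=1}^{d}|\beta^{i}|\right)\cdot W^{1}\right\Vert_{BMO_{2}}^{2}\leq C\sup_{\tau}\left\Vert\mathbb{E}\left[\int_{\tau}^{T}\left(1+|\bar{Z}_{s}|^{2}\right)ds\mid\mathcal{F}_{\tau}\right]\right\Vert_{\infty}\leq C\left(T+\Vert\bar{Z}\cdot W\Vert_{BMO_{2}}^{2}\right)<\infty,
\]
so the assumption holds and $\bar{p}\in(1,+\infty)$ is well defined. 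Moreover $\Phi_{x}$ and $f_{x}$ are bounded, whence $(\xi,f)\in L_{\mathcal{F}_{T}}^{p_{0}}(\Omega;\mathbb{R}^{n})\times\mathcal{M}_{\mathcal{F}}^{1,p_{0}}([0,T];\mathbb{R}^{n})$ for every $p_{0}>\bar{p}^{\ast}$. Proposition \ref{prop-multi-linear-BSDE} then gives, for each exponent $r\in(\bar{p}^{\ast},p_{0})$, a unique solution $(p(\cdot),q(\cdot))\in\mathcal{S}_{\mathcal{F}}^{r}([0,T];\mathbb{R}^{n})\times\mathcal{M}_{\mathcal{F}}^{2,r}([0,T];\mathbb{R}^{n\times d})$; letting $p_{0}\uparrow+\infty$ delivers $q(\cdot)\in\bigcap_{\beta>1}\mathcal{M}_{\mathcal{F}}^{2,\beta}([0,T];\mathbb{R}^{n\times d})$.

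The main point, and the step not covered by Proposition \ref{prop-multi-linear-BSDE}, is the $L^{\infty}$ bound on $p(\cdot)$. For this I would use the explicit representation established inside the proof of Proposition \ref{prop-multi-linear-BSDE}, namely, writing $X_{s}^{t}:=X_{s}X_{t}^{-1}$,
\[
p_{t}=\mathbb{E}\left[\left(X_{T}^{t}\right)^{\intercal}\Phi_{x}(\bar{X}_{T})+\int_{t}^{T}\left(X_{s}^{t}\right)^{\intercal}f_{x}(s)\,ds\mid\mathcal{F}_{t}\right],
\]
where $X$ solves the SDE (\ref{multi-linear-SDE}) with the coefficients above. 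Bounding $\Phi_{x}$ and $f_{x}$ in supremum norm gives $|p_{t}|\leq(\Vert\Phi_{x}\Vert_{\infty}+T\Vert f_{x}\Vert_{\infty})\,\mathbb{E}[\sup_{s\in[t,T]}|X_{s}^{t}|\mid\mathcal{F}_{t}]$, and fixing any $r\in(1,\bar{p})$ and combining Jensen's inequality with the reverse H\"older inequality of Proposition \ref{prop-multi-reverseHolder},
\[
\mathbb{E}\left[\sup_{s\in[t,T]}|X_{s}^{t}|\mid\mathcal{F}_{t}\right]\leq\left(\mathbb{E}\left[\sup_{s\in[t,T]}|X_{s}^{t}|^{r}\mid\mathcal{F}_{t}\right]\right)^{\frac{1}{r}}\leq C^{\frac{1}{r}},
\]
with $C$ independent of $(t,\omega)$. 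This gives $\Vert p\Vert_{\infty}<\infty$. Finally, uniqueness in the asserted class follows because $L_{\mathcal{F}}^{\infty}\subset\mathcal{S}_{\mathcal{F}}^{r}$ and uniqueness already holds in $\mathcal{S}_{\mathcal{F}}^{r}\times\mathcal{M}_{\mathcal{F}}^{2,r}$. I expect this uniform reverse-H\"older estimate, rather than the identification of coefficients, to be the crux: the pointwise bound hinges on both the boundedness of the terminal and source data and on the $t$-uniformity of the constant in Proposition \ref{prop-multi-reverseHolder}.
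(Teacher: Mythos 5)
Your proposal is correct and follows essentially the same route as the paper: identify (\ref{eq-p}) as an instance of (\ref{multi-linear-BSDE}), check the BMO hypothesis of Proposition \ref{prop-multi-reverseHolder} via $|f_{z_i}(s)|\leq L_{3}+\gamma|\bar{Z}_s|$ and $\bar{Z}\cdot W\in BMO$, invoke Proposition \ref{prop-multi-linear-BSDE} for existence and uniqueness, and then obtain $\Vert p\Vert_{\infty}<\infty$ from the conditional-expectation representation of the solution together with the $t$-uniform reverse H\"older estimate. Your explicit derivation of the pointwise bound on $p_t$ is exactly the content of the paper's inequality (\ref{multi-Y-absolute}) specialized to bounded data $\Phi_x(\bar X_T)$ and $f_x(\cdot)$, so the two proofs coincide in substance.
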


\begin{proof}
Since $\left(  \sum_{i=1}^{d}|f_{z_{i}}|\right)  \cdot W^{1}\in BMO$, and
$b_{x},$ $\left(  \sigma_{x}^{1},\ldots,\sigma_{x}^{d}\right)  ,$ $f_{x},$
$f_{y}$ and $\Phi_{x}$ are essentially bounded, by Proposition \ref{prop-multi-linear-BSDE}, (\ref{eq-p}) has
a unique solution $\left(  p(\cdot),q(\cdot)\right)  \in\bigcap_{\beta
>1}\left(  \mathcal{S}_{\mathcal{F}}^{\beta}([0,T];\mathbb{R}_{{}}^{n}%
)\times\mathcal{M}_{\mathcal{F}}^{2,\beta}([0,T];\mathbb{R}_{{}}^{n\times
d})\right)  $. By the inequality
(\ref{multi-Y-absolute}) in the proof of Proposition
\ref{prop-multi-linear-BSDE}, we can easily deduce $p(\cdot)\in L_{\mathcal{F}%
}^{\infty}([0,T];\mathbb{R}_{{}}^{n})$.
\end{proof}

Now we introduce the first-order variational equation for the backward state equation:
\begin{equation}
\left\{
\begin{array}
[c]{ll}%
dY_{1}(t)= & -\left\{  \left(  f_{x}(t)\right)  ^{\intercal}X_{1}%
(t)+f_{y}(t)Y_{1}(t)+\left(  f_{z}(t)\right)  ^{\intercal}Z_{1}(t)\right.  \\
& -\left.  \left[  \left(  f_{z}(t)\right)  ^{\intercal}\left(  \hat{\sigma
}(t)\right)  ^{\intercal}p_{t}+\sum\limits_{i=1}^{d}\left(  \hat{\sigma}%
^{i}(t)\right)  ^{\intercal}q_{t}^{i}\right]  1_{E_{\varepsilon}}(t)\right\}
dt+\left(  Z_{1}(t)\right)  ^{\intercal}dW_{t},\\
Y_{1}(T)= & \left(  \Phi_{x}(\bar{X}_{T})\right)  ^{\intercal}X_{1}(T),
\end{array}
\right.  \label{new-form-y1}%
\end{equation}
where $X_{1}$ is defined in (\ref{new-form-x1}).

\begin{remark}
One can observe that the variational equations (\ref{new-form-y1}), (\ref{new-form-y2}) and adjoint equations (\ref{eq-p}), (\ref{eq-P}) are the
same as those in \cite{Hu17}, although the well-posedness of them is different from
that in \cite{Hu17}. In fact, no matter what assumptions are imposed on the coefficients of the control system (\ref{state-eq}),
we can first assume that the expressions of the variational equation and the adjoint equation are the same as
those in \cite{Hu17}, and then give rigorous proofs of their well-posedness and corresponding estimates.
For the fully coupled case considered in \cite{HuJiXue18}, this approach is summarized as a heuristic derivation.
\end{remark}

The following well-posedness result for $(Y_{1}(\cdot),Z_{1}%
(\cdot))$\ is a direct application of Proposition \ref{est-linear-prop}.

\begin{lemma}
\label{exist-y1-lem} Suppose that Assumption \ref{assum-2} holds. Then there
exists a unique solution $(Y_{1}(\cdot),Z_{1}(\cdot))\in\bigcap_{\beta>1}\left(  \mathcal{S}%
_{\mathcal{F}}^{\beta}([0,T];\mathbb{R})\times\mathcal{M}_{\mathcal{F}%
}^{2,\beta}([0,T];\mathbb{R}^{d})\right)$ to (\ref{new-form-y1}).
\end{lemma}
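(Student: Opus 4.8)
The plan is to recognize (\ref{new-form-y1}) as a one-dimensional linear BSDE of the form (\ref{linear-bsde}) and then invoke Proposition \ref{est-linear-prop} directly. First I would read off the coefficients: setting $\xi=(\Phi_x(\bar{X}_T))^{\intercal}X_1(T)$, $\lambda_s=f_y(s)$, $\mu_s=f_z(s)$, and
\[
\varphi_s=(f_x(s))^{\intercal}X_1(s)-\Big[(f_z(s))^{\intercal}(\hat{\sigma}(s))^{\intercal}p_s+\sum_{i=1}^d(\hat{\sigma}^i(s))^{\intercal}q_s^i\Big]1_{E_{\varepsilon}}(s),
\]
the backward equation in (\ref{new-form-y1}) becomes exactly (\ref{linear-bsde}). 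It then suffices to verify the three hypotheses of Proposition \ref{est-linear-prop}: $\|\lambda\|_{\infty}<\infty$, $\mu\cdot W\in BMO$, and $(\xi,\varphi)\in\bigcap_{\beta>1}\big(L^{\beta}_{\mathcal{F}_T}(\Omega;\mathbb{R})\times\mathcal{M}^{1,\beta}_{\mathcal{F}}([0,T];\mathbb{R})\big)$.

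The first two hypotheses are immediate from the data of the problem. Since $f_y$ is bounded by Assumption \ref{assum-2}(iii), we have $\|\lambda\|_{\infty}=\|f_y\|_{\infty}<\infty$. For the BMO property I would use the growth bound $|f_z(t)|\le L_3+\gamma|\bar{Z}_t|$ from Assumption \ref{assum-2}(iii) together with $\bar{Z}\cdot W\in BMO$ from Theorem \ref{state-eq-exist-th}: for every stopping time $\tau$ this controls $\mathbb{E}\big[\int_{\tau}^T|f_z(s)|^2\,ds\mid\mathcal{F}_{\tau}\big]$ uniformly, so $\mu\cdot W=f_z\cdot W\in BMO$.

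The heart of the argument is the integrability of the data $(\xi,\varphi)$. For $\xi$, boundedness of $\Phi_x$ (Assumption \ref{assum-2}(i)) and the estimate (\ref{est-forward-expansion-2}) place $\xi$ in $\bigcap_{\beta>1}L^{\beta}_{\mathcal{F}_T}$. For $\varphi$, the term $(f_x)^{\intercal}X_1$ is dominated by $\|f_x\|_{\infty}\sup_t|X_1(t)|$, hence lies in $\mathcal{M}^{1,\beta}_{\mathcal{F}}$ for every $\beta$, again by (\ref{est-forward-expansion-2}). The genuinely delicate point is the spike term supported on $E_{\varepsilon}$, which I would bound by Cauchy--Schwarz as
\[
\int_{E_{\varepsilon}}\Big|(f_z)^{\intercal}\hat{\sigma}^{\intercal}p+\sum_i(\hat{\sigma}^i)^{\intercal}q^i\Big|ds\le\|p\|_{\infty}\Big(\int_0^T|f_z|^2ds\Big)^{\frac12}\Big(\int_0^T|\hat{\sigma}|^2ds\Big)^{\frac12}+\Big(\int_0^T|\hat{\sigma}|^2ds\Big)^{\frac12}\Big(\int_0^T|q|^2ds\Big)^{\frac12},
\]
and then argue that each factor has finite moments of every order: $\int_0^T|f_z|^2ds$ by the energy inequality applied to $f_z\cdot W\in BMO$; $\int_0^T|\hat{\sigma}|^2ds$ because $|\hat{\sigma}(t)|\le C(1+|u_t|+|\bar{u}_t|+|\bar{X}_t|)$ thanks to Assumption \ref{assum-2}(ii), while $u,\bar{u}$ have all $L^p$ moments by (\ref{control-integrable}) and $\bar{X}\in\mathcal{S}^p_{\mathcal{F}}$ for all $p$ by Theorem \ref{state-eq-exist-th}; and $\int_0^T|q|^2ds$ since $q\in\bigcap_{\beta>1}\mathcal{M}^{2,\beta}_{\mathcal{F}}$ by Lemma \ref{adj-1st-lem}. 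A final application of H\"{o}lder's inequality to the product then yields $\varphi\in\bigcap_{\beta>1}\mathcal{M}^{1,\beta}_{\mathcal{F}}$, so Proposition \ref{est-linear-prop} furnishes the unique solution $(Y_1,Z_1)\in\bigcap_{\beta>1}\big(\mathcal{S}^{\beta}_{\mathcal{F}}([0,T];\mathbb{R})\times\mathcal{M}^{2,\beta}_{\mathcal{F}}([0,T];\mathbb{R}^d)\big)$. I expect the only obstacle to be assembling these mixed integrability bounds on the $E_{\varepsilon}$ term, since it couples the quadratic-growth factor $f_z$, the merely $L^p$ control-dependent factor $\hat{\sigma}$, and the adjoint process $q$.
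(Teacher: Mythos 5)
Your proposal is correct and follows exactly the route the paper intends: the paper states Lemma \ref{exist-y1-lem} as ``a direct application of Proposition \ref{est-linear-prop}'' with no written proof, and your identification of $\lambda=f_y$, $\mu=f_z$ (in $BMO$ via $|f_z(t)|\le L_3+\gamma|\bar Z_t|$ and $\bar Z\cdot W\in BMO$), together with the moment bounds on $\xi$ and on the spike term in $\varphi$ using $\|p\|_\infty<\infty$, $q\in\bigcap_{\beta>1}\mathcal{M}^{2,\beta}_{\mathcal{F}}$, the energy inequality, and the growth of $\hat\sigma$, supplies precisely the verification that application requires.
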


The relationship between $(Y_{1}(\cdot),Z_{1}(\cdot))$ and $X_{1}(\cdot)$ is
obtained in the following lemma.

\begin{lemma}
\label{relation-y1z1} Suppose that Assumption \ref{assum-2} holds. Then we
have%
\begin{equation}%
\begin{array}
[c]{l}%
Y_{1}(t)=p_{t}^{\intercal}X_{1}(t),\\
Z_{1}^{i}(t)=p_{t}^{\intercal}\hat{\sigma}^{i}(t)1_{E_{\varepsilon}%
}(t)+\left[  p_{t}^{\intercal}\sigma_{x}^{i}(t)+\left(  q_{t}^{i}\right)
^{\intercal}\right]  X_{1}(t),\text{ }i=1,2,\ldots,d,
\end{array}
\label{decoupled-relation-y1-z1}%
\end{equation}
where $\left(  p(\cdot),q(\cdot)\right)  $ is the solution to the equation
(\ref{eq-p}).
\end{lemma}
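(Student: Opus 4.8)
The plan is to verify that the pair $\left(p_{\cdot}^{\intercal}X_{1}(\cdot),\,Z_{1}(\cdot)\right)$, with $Z_{1}$ given by the second line of (\ref{decoupled-relation-y1-z1}), solves the linear BSDE (\ref{new-form-y1}), and then to invoke the uniqueness part of Lemma \ref{exist-y1-lem} to conclude. First I would apply It\^o's product rule to the scalar process $p_{t}^{\intercal}X_{1}(t)$, using the dynamics of $X_{1}$ in (\ref{new-form-x1}) and the dynamics of $p$ read off from (\ref{eq-p}). Writing $dp_{t}=-\alpha_{t}\,dt+\sum_{i=1}^{d}q_{t}^{i}\,dW_{t}^{i}$ with $\alpha_{t}$ the drift appearing in (\ref{eq-p}), the product rule yields
\begin{align*}
d\left(p_{t}^{\intercal}X_{1}(t)\right)
&=\Big(-\alpha_{t}^{\intercal}X_{1}(t)+p_{t}^{\intercal}b_{x}(t)X_{1}(t)+\sum_{i=1}^{d}(q_{t}^{i})^{\intercal}\left[\sigma_{x}^{i}(t)X_{1}(t)+\hat{\sigma}^{i}(t)1_{E_{\varepsilon}}(t)\right]\Big)dt\\
&\quad+\sum_{i=1}^{d}\Big((q_{t}^{i})^{\intercal}X_{1}(t)+p_{t}^{\intercal}\left[\sigma_{x}^{i}(t)X_{1}(t)+\hat{\sigma}^{i}(t)1_{E_{\varepsilon}}(t)\right]\Big)dW_{t}^{i},
\end{align*}
where the extra terms $\sum_{i}(q_{t}^{i})^{\intercal}\sigma_{x}^{i}(t)X_{1}(t)$ in the drift arise from the cross-variation of the $W^{i}$-diffusions of $p$ and $X_{1}$.

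The coefficient in front of $dW_{t}^{i}$ is exactly the expression claimed for $Z_{1}^{i}(t)$ in (\ref{decoupled-relation-y1-z1}), so the identification of $Z_{1}^{i}$ will be immediate once I verify the drift. Substituting the explicit $\alpha_{t}$ and expanding $\alpha_{t}^{\intercal}X_{1}$ using the transposes $M_{t}^{\intercal}=\sum_{i}f_{z_{i}}\sigma_{x}^{i}+f_{y}I_{n}+b_{x}$ and $\bigl(f_{z_{i}}I_{n}+(\sigma_{x}^{i})^{\intercal}\bigr)^{\intercal}=f_{z_{i}}I_{n}+\sigma_{x}^{i}$, the terms $p_{t}^{\intercal}b_{x}X_{1}$ and $\sum_{i}(q_{t}^{i})^{\intercal}\sigma_{x}^{i}X_{1}$ cancel. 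What remains, after inserting $Y_{1}=p^{\intercal}X_{1}$ together with the above $Z_{1}^{i}$ into the generator of (\ref{new-form-y1}), coincides term by term; in particular the $1_{E_{\varepsilon}}$-contributions of the form $\sum_{i}f_{z_{i}}p^{\intercal}\hat{\sigma}^{i}$ produced by $(f_{z})^{\intercal}Z_{1}$ cancel against the identical ones carried by the subtracted bracket $(f_{z})^{\intercal}(\hat{\sigma})^{\intercal}p$ in (\ref{new-form-y1}). Since $p_{T}=\Phi_{x}(\bar{X}_{T})$, the terminal value $p_{T}^{\intercal}X_{1}(T)$ matches $Y_{1}(T)=(\Phi_{x}(\bar{X}_{T}))^{\intercal}X_{1}(T)$.

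The main difficulty is purely the bookkeeping of transposes and the tracking of which cross terms cancel; this is the algebraic reflection of the way the adjoint equation (\ref{eq-p}) was designed, and no estimate is needed for it. Before invoking uniqueness I would also confirm that $\left(p^{\intercal}X_{1},Z_{1}\right)$ belongs to the solution class $\bigcap_{\beta>1}\bigl(\mathcal{S}_{\mathcal{F}}^{\beta}\times\mathcal{M}_{\mathcal{F}}^{2,\beta}\bigr)$ of Lemma \ref{exist-y1-lem}: the boundedness of $p$ from Lemma \ref{adj-1st-lem}, the estimate (\ref{est-forward-expansion-2}) for $X_{1}$, the fact that $q\cdot W\in BMO$, and H\"older's inequality together give the required integrability of $Z_{1}^{i}=p^{\intercal}\hat{\sigma}^{i}1_{E_{\varepsilon}}+\bigl[p^{\intercal}\sigma_{x}^{i}+(q^{i})^{\intercal}\bigr]X_{1}$, ensuring that the stochastic integrals above are genuine martingales (after a routine localization by stopping times). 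The uniqueness asserted in Lemma \ref{exist-y1-lem} then forces $(Y_{1},Z_{1})$ to equal $\left(p^{\intercal}X_{1},Z_{1}\right)$, which is precisely (\ref{decoupled-relation-y1-z1}).
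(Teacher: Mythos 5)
Your proposal is correct and takes exactly the paper's route: the paper's entire proof is ``apply It\^o's formula to $p_{t}^{\intercal}X_{1}(t)$ and conclude by uniqueness of the solution to (\ref{new-form-y1}),'' which is precisely what you carry out, and your bookkeeping of the drift cancellations, the diffusion coefficient, and the terminal condition is accurate. One small correction: Lemma \ref{adj-1st-lem} provides $q(\cdot)\in\bigcap_{\beta>1}\mathcal{M}_{\mathcal{F}}^{2,\beta}([0,T];\mathbb{R}^{n\times d})$, not $q\cdot W\in BMO$ (the latter is never established in the paper and need not hold, since the adjoint equation (\ref{eq-p}) has unbounded stochastic Lipschitz coefficients); your integrability check for $Z_{1}$ goes through unchanged using this weaker property together with H\"older's inequality, the boundedness of $p$, and the estimate (\ref{est-forward-expansion-2}).
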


\begin{proof}
Applying It\^{o}'s formula to $p_{t}^{\intercal}X_{1}(t)$, we can obtain the
desired result by the uniqueness of the solution to (\ref{new-form-y1}).
\end{proof}

\begin{lemma}
\label{est-one-order} Suppose that Assumption \ref{assum-2} holds. Then, for
any $\beta>1$, we have
\begin{equation}
\mathbb{E}\left[  \sup\limits_{t\in\lbrack0,T]}\left\vert Y_{1}(t)\right\vert
^{\beta}+\left(  \int_{0}^{T}\left\vert Z_{1}(t)\right\vert ^{2}dt\right)
^{\frac{\beta}{2}}\right]  =O\left(  \varepsilon^{\frac{\beta}{2}}\right)
,\label{est-x1-y1}%
\end{equation}
and for any $\beta\in\left(  1\vee2p_{f_{z}}^{-1},2\right)  $,%
\begin{equation}
\mathbb{E}\left[  \sup\limits_{t\in\lbrack0,T]}\Gamma_{t}\left\vert
Y_{t}^{\varepsilon}-\bar{Y}_{t}-Y_{1}(t)\right\vert ^{\beta}+\left(  \int%
_{0}^{T}\left(  \Gamma_{t}\right)  ^{\frac{2}{\beta}}\left\vert Z_{t}%
^{\varepsilon}-\bar{Z}_{t}-Z_{1}(t)\right\vert ^{2}dt\right)  ^{\frac{\beta
}{2}}\right]  =O\left(  \varepsilon^{\beta}\right)  ,\label{est-eta2-zeta2}%
\end{equation}
where $\Gamma_{t}:=\mathcal{E}\left(  \int_{0}^{t}\left(  f_{z}(s)\right)
^{\intercal}dW_{s}\right)  $ for $t\in\lbrack0,T]$.
\end{lemma}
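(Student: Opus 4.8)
For the first estimate \eqref{est-x1-y1}, the plan is to bypass solving \eqref{new-form-y1} and instead use the explicit representation of Lemma \ref{relation-y1z1}. Since $Y_1(t)=p_t^\intercal X_1(t)$ with $p(\cdot)\in L_{\mathcal F}^{\infty}$ by Lemma \ref{adj-1st-lem}, the bound on $\mathbb E[\sup_t|Y_1(t)|^{\beta}]$ is immediate from $\|p\|_\infty$ and the forward estimate \eqref{est-forward-expansion-2}. For $Z_1$ I would substitute $Z_1^i(t)=p_t^\intercal\hat\sigma^i(t)1_{E_\varepsilon}(t)+[p_t^\intercal\sigma_x^i(t)+(q_t^i)^\intercal]X_1(t)$, so that $\int_0^T|Z_1|^2dt$ splits into a spike part $\int_{E_\varepsilon}|p|^2|\hat\sigma|^2dt$ and a part dominated by $\sup_t|X_1(t)|^2(1+\int_0^T|q_t|^2dt)$. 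The spike part is $O(\varepsilon^{\beta/2})$ because $\hat\sigma$ has all moments and $|E_\varepsilon|=\varepsilon$; the second part is $O(\varepsilon^{\beta/2})$ after a H\"older split between $\sup_t|X_1|$, which is $O(\varepsilon^{1/2})$ in every $L^r$ by \eqref{est-forward-expansion-2}, and $\int_0^T|q|^2dt$, which has all moments since $q\in\bigcap_{\beta>1}\mathcal M_{\mathcal F}^{2,\beta}$. This step is routine.

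For \eqref{est-eta2-zeta2} I would introduce the remainders $Y^{2,\varepsilon}:=\eta^{1,\varepsilon}-Y_1$ and $Z^{2,\varepsilon}:=\zeta^{1,\varepsilon}-Z_1$ and derive the linear BSDE they solve by subtracting \eqref{new-form-y1} from \eqref{ep-bar-y}. The structural point is to take the \emph{optimal} coefficients $f_y(t)$, $f_z(t)$ as the linear part, writing $\tilde f_y^\varepsilon\eta^{1,\varepsilon}-f_yY_1=f_yY^{2,\varepsilon}+(\tilde f_y^\varepsilon-f_y)\eta^{1,\varepsilon}$ and $(\tilde f_z^\varepsilon)^\intercal\zeta^{1,\varepsilon}-(f_z)^\intercal Z_1=(f_z)^\intercal Z^{2,\varepsilon}+(\tilde f_z^\varepsilon-f_z)^\intercal\zeta^{1,\varepsilon}$, so the driver has exactly the form of \eqref{linear-bsde} with $\lambda=f_y$ (bounded) and $\mu=f_z$ (so $\mu\cdot W\in BMO$); this is precisely what makes the weight produced by Proposition \ref{est-exp-yz-prop} coincide with the $\Gamma$ built from $f_z$ in the statement. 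The free term becomes
\[
\varphi=(f_x)^\intercal(\xi^{1,\varepsilon}-X_1)+(\tilde f_x^\varepsilon-f_x)^\intercal\xi^{1,\varepsilon}+(\tilde f_y^\varepsilon-f_y)\eta^{1,\varepsilon}+(\tilde f_z^\varepsilon-f_z)^\intercal\zeta^{1,\varepsilon}+\big[\hat f+(f_z)^\intercal(\hat\sigma)^\intercal p+\textstyle\sum_{i}(\hat\sigma^i)^\intercal q^i\big]1_{E_\varepsilon},
\]
and the terminal value is $Y^{2,\varepsilon}(T)=(\Phi_x(\bar X_T))^\intercal(\xi^{1,\varepsilon}(T)-X_1(T))+(\tilde\Phi_x^\varepsilon(T)-\Phi_x(\bar X_T))^\intercal\xi^{1,\varepsilon}(T)$.

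I would then apply Proposition \ref{est-exp-yz-prop} with $\beta\in(1\vee2p_{f_z}^{-1},2)$ and some $\beta_0\in(\beta,2)$, which reduces the claim to showing that the weighted terminal and free-term data on the right-hand side of \eqref{est-exp-yz} are $O(\varepsilon^{\beta_0})$. The terminal value is $O(\varepsilon)$ because $\xi^{1,\varepsilon}(T)-X_1(T)$ is $O(\varepsilon)$ by \eqref{est-forward-expansion-3} (with $\Phi_x$ bounded), while $|\tilde\Phi_x^\varepsilon(T)-\Phi_x(\bar X_T)|\le C|\xi^{1,\varepsilon}(T)|$ by boundedness of $\Phi_{xx}$, giving a product of two $O(\varepsilon^{1/2})$ factors. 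The first term of $\varphi$ is $O(\varepsilon)$ directly from \eqref{est-forward-expansion-3}, and the three remaining smooth terms off $E_\varepsilon$ are products of a coefficient difference, bounded by $C(|\xi^{1,\varepsilon}|+|\eta^{1,\varepsilon}|+|\zeta^{1,\varepsilon}|)$ using boundedness of $f_{xx},f_{xy},f_{yy},f_{xz},f_{yz},f_{zz}$, with a first-order quantity; each is handled by Cauchy--Schwarz in time together with Lemma \ref{est-epsilon-bar} and \eqref{est-x1-y1}, every pair being $O(\varepsilon^{1/2})\cdot O(\varepsilon^{1/2})=O(\varepsilon)$.

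The genuine obstacle, and the reason Proposition \ref{est-exp-yz-prop} (rather than Proposition \ref{est-linear-prop}) is indispensable, is the part of $\varphi$ supported on $E_\varepsilon$. The bounded pieces of $\hat f+(f_z)^\intercal(\hat\sigma)^\intercal p+\sum_i(\hat\sigma^i)^\intercal q^i$ integrate to $O(\varepsilon)$, but Assumption \ref{assum-2}(iii) only gives $|\hat f|\le L_2(1+|\bar Y|+|\bar Z|)\le C(1+|\bar Z|)$, so the spike (and likewise the on-$E_\varepsilon$ remnants of the cross terms in $z$) carries a contribution controlled by $\mathbb E[(\int_{E_\varepsilon}\Gamma_t^{1/\beta}|\bar Z_t|\,dt)^{\beta_0}]$. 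A naive Cauchy--Schwarz bound $\int_{E_\varepsilon}|\bar Z|\,dt\le\varepsilon^{1/2}(\int_{E_\varepsilon}|\bar Z|^2dt)^{1/2}$ only yields $O(\varepsilon^{\beta_0/2})$, i.e.\ $O(\varepsilon^{\beta/2})$ after the outer power, which is far too weak. The hard part is thus to upgrade this weighted small-interval estimate for the optimal $\bar Z$ to order $\varepsilon^{\beta_0}$; this is exactly where the restriction $\beta<2$ (hence $\beta_0<2$) and the $\Gamma$-weighted inequality \eqref{est-exp-yz} are forced, exploiting that $\mathbb E[(\int_0^T|\bar Z_s|^2ds)^p]<\infty$ for every $p>0$, as flagged in the introduction. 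Once this estimate is secured, summing the terminal, smooth, and spike contributions gives the asserted $O(\varepsilon^{\beta})$.
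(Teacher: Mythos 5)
Your skeleton is the paper's: for \eqref{est-x1-y1} you use the representation \eqref{decoupled-relation-y1-z1} together with $\left\Vert p\right\Vert _{\infty}<\infty$, $q(\cdot)\in\bigcap_{\beta>1}\mathcal{M}_{\mathcal{F}}^{2,\beta}$ and Lemma \ref{est-forward-expansion-lem}, exactly as in the paper; for \eqref{est-eta2-zeta2} you form $(\eta^{2,\varepsilon},\zeta^{2,\varepsilon})=(\eta^{1,\varepsilon}-Y_{1},\zeta^{1,\varepsilon}-Z_{1})$, identify the linear BSDE with $\lambda=f_{y}$, $\mu=f_{z}$ (so the weight is the $\Gamma$ of the statement), and apply Proposition \ref{est-exp-yz-prop} with $\beta_{0}\in(\beta,2)$, which is also the paper's reduction, and your terminal and off-spike quadratic terms are handled the same way. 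However, there is a genuine gap: at the point you yourself single out as ``the hard part'' --- upgrading the spike contributions $\mathbb{E}\bigl[\bigl(\int_{E_{\varepsilon}}\Gamma_{t}^{1/\beta}\left\vert \bar{Z}_{t}\right\vert(\cdots)\,dt\bigr)^{\beta_{0}}\bigr]$ from the naive $O(\varepsilon^{\beta_{0}/2})$ to $O(\varepsilon^{\beta_{0}})$ --- you provide no argument at all; ``once this estimate is secured'' is an assertion, and this single estimate is essentially the entire technical content of \eqref{est-eta2-zeta2}, since every other term is routine. Moreover you misattribute its resolution: it is not produced by the inequality \eqref{est-exp-yz} itself (that proposition only makes exponents $\beta_{0}<2$ admissible); it requires a separate weighted estimate on $\bar{Z}$ that your proposal does not contain.

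The missing argument, as the paper gives it, runs as follows. First one proves $\mathbb{E}\bigl[\int_{0}^{T}\Gamma_{t}^{2/\beta}\left\vert \bar{Z}_{t}\right\vert ^{2}dt\bigr]<\infty$: choose an auxiliary $\beta_{1}\in\left(1\vee2p_{f_{z}}^{-1},\beta\right)$, apply H\"{o}lder's inequality to separate $\sup_{t}\Gamma_{t}^{2/\beta_{1}}$ from $\bigl(\int_{0}^{T}\left\vert \bar{Z}_{t}\right\vert ^{2}dt\bigr)^{\beta/(\beta-\beta_{1})}$, control the first factor by Doob's inequality plus the reverse H\"{o}lder inequality for $\Gamma=\mathcal{E}(f_{z}\cdot W)$ (this is exactly where the hypothesis $\beta>2p_{f_{z}}^{-1}$ enters, since it guarantees $2/\beta_{1}<p_{f_{z}}$), and the second by the fact that $\int_{0}^{T}\left\vert \bar{Z}_{t}\right\vert ^{2}dt$ has moments of every order. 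Integrability of $t\mapsto\mathbb{E}\bigl[\Gamma_{t}^{2/\beta}\left\vert \bar{Z}_{t}\right\vert ^{2}\bigr]$ then yields the key bound \eqref{est-exp-fzeps-fz-zeta1}, $\mathbb{E}\bigl[\int_{E_{\varepsilon}}\Gamma_{t}^{2/\beta}\left\vert \bar{Z}_{t}\right\vert ^{2}dt\bigr]\leq C\varepsilon$ (for a.e.\ $t_{0}$, which suffices for the $dt\otimes d\mathbb{P}$-a.e.\ maximum principle). With this in hand each spike term closes: for $\hat{f}1_{E_{\varepsilon}}$, Cauchy--Schwarz in time produces one factor $\varepsilon^{\beta_{0}/2}$ and Jensen's inequality (here $\beta_{0}/2<1$ is indispensable) applied to \eqref{est-exp-fzeps-fz-zeta1} produces the other; for the cross term $\left\vert \bar{Z}_{t}\right\vert \left\vert \zeta^{1,\varepsilon}(t)\right\vert 1_{E_{\varepsilon}}(t)$ one applies H\"{o}lder with exponents $2/\beta_{0}$ and $2/(2-\beta_{0})$ (again needing $\beta_{0}<2$), pairing \eqref{est-exp-fzeps-fz-zeta1} against $\bigl(\int_{0}^{T}\left\vert \zeta^{1,\varepsilon}(t)\right\vert ^{2}dt\bigr)^{1/2}$, which is $O(\varepsilon^{1/2})$ in every moment by Lemma \ref{est-epsilon-bar} (this is the paper's \eqref{est-exp-Zbar-zeta1}); the term $\left\vert \bar{Z}_{t}\right\vert \left\vert u_{t}\right\vert 1_{E_{\varepsilon}}(t)$ is treated analogously using \eqref{control-integrable}. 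Without this block your proof establishes only $O(\varepsilon^{\beta/2})$ in \eqref{est-eta2-zeta2}, which is insufficient for the maximum principle.
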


\begin{proof}
For simplicity, we only prove the case $d=1$, and the proof for the case $d>1$
is simliar. By the estimate (\ref{est-forward-expansion-2}), Lemma
\ref{adj-1st-lem} and H\"{o}lder's inequality, for $1<\beta<\beta_{0}$, we get
\begin{equation}%
\begin{array}
[c]{rl}
& \mathbb{E}\left[  \left(  \int_{0}^{T}\left\vert q_{t}^{\intercal}%
X_{1}(t)\right\vert ^{2}dt\right)  ^{\frac{\beta}{2}}\right]  \\
\leq & \mathbb{E}\left[  \sup\limits_{t\in\lbrack0,T]}\left\vert
X_{1}(t)\right\vert ^{\beta}\left(  \int_{0}^{T}\left\vert q_{t}\right\vert
^{2}dt\right)  ^{\frac{\beta}{2}}\right]  \\
\leq & \left(  \mathbb{E}\left[  \sup\limits_{t\in\lbrack0,T]}\left\vert
X_{1}(t)\right\vert ^{\beta_{0}}\right]  \right)  ^{\frac{\beta}{\beta_{0}}%
}\left(  \mathbb{E}\left[  \left(  \int_{0}^{T}\left\vert q_{t}\right\vert
^{2}dt\right)  ^{\frac{\beta\beta_{0}}{2(\beta_{0}-\beta)}}\right]  \right)
^{\frac{\beta_{0}-\beta}{\beta_{0}}}\\
\leq & C\varepsilon^{\frac{\beta}{2}}.
\end{array}
\label{est-1st-expan-qX1}%
\end{equation}
The other terms in (\ref{decoupled-relation-y1-z1}) can be estimated like
(\ref{est-1st-expan-qX1}). Therefore, we obtain (\ref{est-x1-y1}).

We use the notations $\left(  \xi^{1,\varepsilon}(t),\eta^{1,\varepsilon
}(t),\zeta^{1,\varepsilon}(t)\right)  $, $\tilde{f}_{x}^{\varepsilon}(t)$,
$\tilde{f}_{y}^{\varepsilon}(t)$, $\tilde{f}_{z}^{\varepsilon}(t)$ and
$\tilde{\Phi}_{x}^{\varepsilon}(T)$ in the proof of Lemma
\ref{est-epsilon-bar}. Set $$\left(  \xi^{2,\varepsilon}(t),\eta^{2,\varepsilon
}(t),\zeta^{2,\varepsilon}(t)\right)  :=\left(  \xi^{1,\varepsilon}%
(t)-X_{1}(t),\eta^{1,\varepsilon}(t)-Y_{1}(t),\zeta^{1,\varepsilon}%
(t)-Z_{1}(t)\right),  $$ we have%
\begin{equation}
\left\{
\begin{array}
[c]{rl}%
d\eta^{2,\varepsilon}(t)= & -\left[  \left(  f_{x}(t)\right)  ^{\intercal}%
\xi^{2,\varepsilon}(t)+f_{y}(t)\eta^{2,\varepsilon}(t)+f_{z}(t)\zeta
^{2,\varepsilon}(t)+A_{1}^{\varepsilon}(t)\right]  dt\\
& +\zeta^{2,\varepsilon}(t)dW_{t},\\
\eta^{2,\varepsilon}(T)= & \left(  \tilde{\Phi}_{x}^{\varepsilon}(T)\right)
^{\intercal}\xi^{2,\varepsilon}(T)+B_{1}^{\varepsilon}(T),
\end{array}
\right.  \label{eta-eps-1}%
\end{equation}
where%
\[%
\begin{array}
[c]{rl}%
A_{1}^{\varepsilon}(t)= & \left[  \tilde{f}_{x}^{\varepsilon}(t)-f_{x}%
(t)\right]  ^{\intercal}\xi^{1,\varepsilon}(t)+\left[  \tilde{f}%
_{y}^{\varepsilon}(t)-f_{y}(t)\right]  \eta^{1,\varepsilon}(t)+\left[
\tilde{f}_{z}^{\varepsilon}(t)-f_{z}(t)\right]  \zeta^{1,\varepsilon}(t)\\
& +\ \hat{f}(t)1_{E_{\varepsilon}}(t)+\left(  \hat{\sigma}(t)\right)
^{\intercal}\left[  f_{z}(t)p_{t}+q_{t}\right]  1_{E_{\varepsilon}}(t),\\
B_{1}^{\varepsilon}(T)= & \left[  \tilde{\Phi}_{x}^{\varepsilon}(T)-\Phi
_{x}(\bar{X}_{T})\right]  ^{\intercal}X_{1}(T).
\end{array}
\]
For any $\beta\in\left(  1\vee2p_{f_{z}}^{-1},2\right)  $ and $\beta_{0}%
\in(\beta,2)$, by Proposition \ref{est-exp-yz-prop}, we have
\begin{equation}%
\begin{array}
[c]{l}%
\mathbb{E}\left[  \sup\limits_{t\in\lbrack0,T]}\Gamma_{t}\left\vert
\eta^{2,\varepsilon}(t)\right\vert ^{\beta}\right]  +\mathbb{E}\left[  \left(
\int_{0}^{T}\left(  \Gamma_{t}\right)  ^{\frac{2}{\beta}}\left\vert
\zeta^{2,\varepsilon}(t)\right\vert ^{2}dt\right)  ^{\frac{\beta}{2}}\right]
\\
\leq C\left(  \mathbb{E}\left[  \left(  \Gamma_{T}\right)  ^{\frac{\beta_{0}%
}{\beta}}\left\vert \eta^{2,\varepsilon}(T)\right\vert ^{\beta_{0}}+\left(
\int_{0}^{T}\left(  \Gamma_{t}\right)  ^{\frac{1}{\beta}}\left\vert \left(
f_{x}(t)\right)  ^{\intercal}\xi^{2,\varepsilon}(t)+A_{1}^{\varepsilon
}(t)\right\vert dt\right)  ^{\beta_{0}}\right]  \right)  ^{\frac{\beta}%
{\beta_{0}}}.
\end{array}
\label{est-eta2-zeta2-pre}%
\end{equation}
Due to the definition of $\tilde{f}_{z}^{\varepsilon}(t)$ and Assumption 2.2 (iii), we get
\[%
\begin{array}
[c]{rl}%
\left\vert \tilde{f}_{z}^{\varepsilon}(t)-f_{z}(t)\right\vert \leq &
\left\vert \tilde{f}_{z}^{\varepsilon}(t)-f_{z}(t,\bar{X}_{t},\bar{Y}_{t}%
,\bar{Z}_{t},u_{t}^{\varepsilon})\right\vert +\left\vert \hat{f}%
_{z}(t)\right\vert 1_{E_{\varepsilon}}(t)\\
\leq & C\left[  \left\vert \xi^{1,\varepsilon}(t)\right\vert +\left\vert
\eta^{1,\varepsilon}(t)\right\vert +\left\vert \zeta^{1,\varepsilon
}(t)\right\vert +(1+\left\vert \bar{Z}_{t}\right\vert )1_{E_{\varepsilon}%
}(t)\right]  .
\end{array}
\]
We only estimate the most important and difficult terms in
(\ref{est-eta2-zeta2-pre}) as follows.

By virtue of Assumption \ref{assum-2}, $\left\vert \tilde{\Phi}_{x}^{\varepsilon}(T)-\Phi_{x}%
(\bar{X}_{T})\right\vert \leq C\left\vert \xi^{1,\varepsilon}(T)\right\vert $.
Then, by H\"{o}lder's inequality, reverse H\"{o}lder's inequality, the estimates (\ref{est-forward-expansion-1}%
) and (\ref{est-x1-y1}),
\[%
\begin{array}
[c]{l}%
\mathbb{E}\left[  \left(  \Gamma_{T}\right)  ^{\frac{\beta_{0}}{\beta}%
}\left\vert B_{1}^{\varepsilon}(T)\right\vert ^{\beta_{0}}\right]  \\
\leq C\left(  \mathbb{E}\left[  \left(  \Gamma_{T}\right)  ^{\frac{2}{\beta}%
}\right]  \right)  ^{\frac{\beta_{0}}{2}}\left(  \mathbb{E}\left[
\sup\limits_{t\in\lbrack0,T]}\left\vert X_{1}(t)\right\vert ^{\frac{4\beta
_{0}}{2-\beta_{0}}}\right]  \right)  ^{\frac{2-\beta_{0}}{4}}\left(
\mathbb{E}\left[  \sup\limits_{t\in\lbrack0,T]}\left\vert \xi^{1,\varepsilon
}(t)\right\vert ^{\frac{4\beta_{0}}{2-\beta_{0}}}\right]  \right)
^{\frac{2-\beta_{0}}{4}}\\
\leq C\varepsilon^{\beta_{0}}.
\end{array}
\]

Due to H\"{o}lder's inequality, we have%
\begin{equation}%
\begin{array}
[c]{l}%
\mathbb{E}\left[  \left(  \int_{E_{\varepsilon}}\left(  \Gamma_{t}\right)
^{\frac{1}{\beta}}\left\vert \bar{Z}_{t}\right\vert \left\vert \zeta
^{1,\varepsilon}(t)\right\vert dt\right)  ^{\beta_{0}}\right]  \\
\leq\mathbb{E}\left[  \left(  \int_{E_{\varepsilon}}\left(  \Gamma_{t}\right)
^{\frac{2}{\beta}}\left\vert \bar{Z}_{t}\right\vert ^{2}dt\right)
^{\frac{\beta_{0}}{2}}\left(  \int_{0}^{T}\left\vert \zeta^{1,\varepsilon
}(t)\right\vert ^{2}dt\right)  ^{\frac{\beta_{0}}{2}}\right]  \\
\leq\left(  \mathbb{E}\left[  \int_{E_{\varepsilon}}\left(  \Gamma_{t}\right)
^{\frac{2}{\beta}}\left\vert \bar{Z}_{t}\right\vert ^{2}dt\right]  \right)
^{\frac{\beta_{0}}{2}}\left(  \mathbb{E}\left[  \left(  \int_{0}^{T}\left\vert
\zeta^{1,\varepsilon}(t)\right\vert ^{2}dt\right)  ^{\frac{\beta_{0}}%
{2-\beta_{0}}}\right]  \right)  ^{\frac{2-\beta_{0}}{2}}.
\end{array}
\label{est-exp-Zbar-zeta1}%
\end{equation}
Thanks to H\"{o}lder's inequality, Doob's inequality and reverse H\"{o}lder's
inequality, for any fixed \\
$\beta_{1}\in\left(  1\vee2p_{f_{z}}^{-1},2\right)
$ such that $\beta_{1}<\beta$, we get%
\[%
\begin{array}
[c]{rl}
& \mathbb{E}\left[  \int_{0}^{T}\left(  \Gamma_{t}\right)  ^{\frac{2}{\beta}%
}\left\vert \bar{Z}_{t}\right\vert ^{2}dt\right]  \\
\leq & \left(  \mathbb{E}\left[  \sup\limits_{t\in\lbrack0,T]}\left(
\Gamma_{t}\right)  ^{\frac{2}{\beta_{1}}}\right]  \right)  ^{\frac{\beta_{1}%
}{\beta}}\left(  \mathbb{E}\left[  \left(  \int_{0}^{T}\left\vert \bar{Z}%
_{t}\right\vert ^{2}dt\right)  ^{\frac{\beta}{\beta-\beta_{1}}}\right]
\right)  ^{\frac{\beta-\beta_{1}}{\beta}}\\
\leq & C\left(  \mathbb{E}\left[  \left(  \Gamma_{T}\right)  ^{\frac{2}%
{\beta_{1}}}\right]  \right)  ^{\frac{\beta_{1}}{\beta}}\left(  \mathbb{E}%
\left[  \left(  \int_{0}^{T}\left\vert \bar{Z}_{t}\right\vert ^{2}dt\right)
^{\frac{\beta}{\beta-\beta_{1}}}\right]  \right)  ^{\frac{\beta-\beta_{1}%
}{\beta}}\\
< & \infty,
\end{array}
\]
which implies%
\begin{equation}
\mathbb{E}\left[
{\displaystyle\int_{E_{\varepsilon}}}
\left(  \Gamma_{t}\right)  ^{\frac{2}{\beta}}\left\vert \bar{Z}_{t}\right\vert
^{2}dt\right]  \leq C\varepsilon.\label{est-exp-fzeps-fz-zeta1}%
\end{equation}
It follows from Lemma \ref{est-epsilon-bar}, the inequalities
(\ref{est-exp-Zbar-zeta1}) and (\ref{est-exp-fzeps-fz-zeta1}) that%
\[
\mathbb{E}\left[  \left(
{\displaystyle\int_{E_{\varepsilon}}}
\left(  \Gamma_{t}\right)  ^{\frac{1}{\beta}}\left\vert \bar{Z}_{t}\right\vert
\left\vert \zeta^{1,\varepsilon}(t)\right\vert dt\right)  ^{\beta_{0}}\right]
\leq C\varepsilon^{\beta_{0}}.
\]

From the estimate (\ref{est-exp-fzeps-fz-zeta1}), we have%
\[
\mathbb{E}\left[  \left(
{\displaystyle\int_{E_{\varepsilon}}}
\left(  \Gamma_{t}\right)  ^{\frac{1}{\beta}}|\hat{f}(t)|dt\right)
^{\beta_{0}}\right]  \leq C\left(  \mathbb{E}\left[
{\displaystyle\int_{E_{\varepsilon}}}
\left(  \Gamma_{t}\right)  ^{\frac{2}{\beta}}\left(  1+\left\vert \bar{Z}%
_{t}\right\vert ^{2}\right)  dt\right]  \right)  ^{\frac{\beta_{0}}{2}%
}\varepsilon^{\frac{\beta_{0}}{2}}\leq C\varepsilon^{\beta_{0}}%
\]
and%
\[%
\begin{array}
[c]{l}%
\mathbb{E}\left[  \left(  \int_{E_{\varepsilon}}\left(  \Gamma_{t}\right)
^{\frac{1}{\beta}}\left\vert \bar{Z}_{t}\right\vert \left\vert u_{t}%
\right\vert dt\right)  ^{\beta_{0}}\right]  \\
\leq\mathbb{E}\left[  \left(  \int_{E_{\varepsilon}}\left(  \Gamma_{t}\right)
^{\frac{2}{\beta}}\left\vert \bar{Z}_{t}\right\vert ^{2}dt\right)
^{\frac{\beta_{0}}{2}}\left(  \int_{E_{\varepsilon}}\left\vert u_{t}%
\right\vert ^{2}dt\right)  ^{\frac{\beta_{0}}{2}}\right]  \\
\leq\left(  \mathbb{E}\left[  \int_{E_{\varepsilon}}\left(  \Gamma_{t}\right)
^{\frac{2}{\beta}}\left\vert \bar{Z}_{t}\right\vert ^{2}dt\right]  \right)
^{\frac{\beta_{0}}{2}}\left(  \mathbb{E}\left[  \left(  \int_{E_{\varepsilon}%
}\left\vert u_{t}\right\vert ^{2}dt\right)  ^{\frac{\beta_{0}}{2-\beta_{0}}%
}\right]  \right)  ^{\frac{2-\beta_{0}}{2}}\\
\leq\left(  \mathbb{E}\left[  \int_{E_{\varepsilon}}\left(  \Gamma_{t}\right)
^{\frac{2}{\beta}}\left\vert \bar{Z}_{t}\right\vert ^{2}dt\right]  \right)
^{\frac{\beta_{0}}{2}}\left(  \mathbb{E}\left[  \int_{E_{\varepsilon}%
}\left\vert u_{t}\right\vert ^{\frac{2\beta_{0}}{2-\beta_{0}}}dt^{{}}\right]
\right)  ^{\frac{2-\beta_{0}}{2}}\varepsilon^{\beta_{0}-1}\\
\leq C\varepsilon^{\beta_{0}}.
\end{array}
\]
Hence, we finally obtain (\ref{est-eta2-zeta2}).
\end{proof}

\bigskip

Now define $\Delta(t):=\left(  \Delta^{1}(t),\Delta^{2}(t),\ldots,\Delta
^{d}(t)\right)  ^{\intercal}$, where $\Delta^{i}(t):=\left(  \hat{\sigma}%
^{i}(t)\right)  ^{\intercal}p_{t}$ for $i=1,2,\ldots,d$ and $t\in\lbrack0,T]$.
For $w=x$, $y$, $z$, denote%
\begin{equation}%
\begin{array}
[c]{l}%
\hat{f}(t,\Delta)=f(t,\bar{X}_{t},\bar{Y}_{t},\bar{Z}_{t}+\Delta
(t),u_{t})-f(t),\\
\hat{f}_{w}(t,\Delta)=f_{w}(t,\bar{X}_{t},\bar{Y}_{t},\bar{Z}_{t}%
+\Delta(t),u_{t})-f_{w}(t),
\end{array}
\label{f-fw-hat}%
\end{equation}
and%
\begin{equation}
\Upsilon_{t}=\left(  \left(  \sigma_{x}^{1}(t)\right)  ^{\intercal}p_{t}%
+q_{t}^{1},\left(  \sigma_{x}^{2}(t)\right)  ^{\intercal}p_{t}+q_{t}%
^{2},\ldots,\left(  \sigma_{x}^{d}(t)\right)  ^{\intercal}p_{t}+q_{t}%
^{d}\right)  \text{ for }t\in\lbrack0,T].\label{def-capital-Gamma}%
\end{equation}

In view of the relation (\ref{decoupled-relation-y1-z1}), we introduce the
second-order variational equation as follows:
\begin{equation}
\left\{
\begin{array}
[c]{ll}%
dY_{2}(t)= & -\left\{  \left(  f_{x}(t)\right)  ^{\intercal}X_{2}%
(t)+f_{y}(t)Y_{2}(t)+\left(  f_{z}(t)\right)  ^{\intercal}Z_{2}(t)\right.  \\
& +\left[  \sum\limits_{i=1}^{d}\left(  \hat{\sigma}^{i}(t)\right)
^{\intercal}q_{t}^{i}+\hat{f}(t,\Delta1_{E_{\varepsilon}})\right]
1_{E_{\varepsilon}}(t)\\
& +\left.  \dfrac{1}{2}X_{1}^{\intercal}\left(  t\right)  \left(  I_{n}%
,p_{t},\Upsilon_{t}\right)  D^{2}f(t)\left(  I_{n},p_{t},\Upsilon_{t}\right)
^{\intercal}X_{1}\left(  t\right)  \right\}  dt+\ Z_{2}^{\intercal}%
(t)dW_{t},\\
Y_{2}(T)= & \left(  \Phi_{x}(\bar{X}_{T})\right)  ^{\intercal}X_{2}%
(T)+\dfrac{1}{2}\mathrm{tr}\left\{  \Phi_{xx}(\bar{X}_{T})X_{1}(T)X_{1}%
^{\intercal}(T)\right\}  ,
\end{array}
\right.  \label{new-form-y2}%
\end{equation}
where $\hat{f}(t,\Delta1_{E_{\varepsilon}})=f(t,\bar{X}_{t},\bar{Y}_{t},\bar{Z}%
_{t}+\Delta(t)1_{E_{\varepsilon}}(t),u_{t}^{\varepsilon})-f(t)$.
Similar to Lemma \ref{exist-y1-lem}, we have the following lemma.

\begin{lemma}
\label{exist-y2-lem} Suppose that Assumption \ref{assum-2} holds. Then there
exists a unique solution $(Y_{2}(\cdot),Z_{2}(\cdot))\in\bigcap_{\beta
>1}\left(  \mathcal{S}_{\mathcal{F}}^{\beta}([0,T];\mathbb{R})\times
\mathcal{M}_{\mathcal{F}}^{2,\beta}([0,T];\mathbb{R}^{d})\right)  $ to
(\ref{new-form-y2}).
\end{lemma}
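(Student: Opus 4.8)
The plan is to recognize the second-order variational equation (\ref{new-form-y2}) as a one-dimensional linear BSDE of the form (\ref{linear-bsde}) and to conclude by a direct application of Proposition \ref{est-linear-prop}, exactly as in the proof of Lemma \ref{exist-y1-lem}. Reading off the coefficients, I would set $\lambda_t=f_y(t)$ and $\mu_t=f_z(t)$, take as terminal value $\xi=(\Phi_x(\bar X_T))^\intercal X_2(T)+\frac12\mathrm{tr}\{\Phi_{xx}(\bar X_T)X_1(T)X_1^\intercal(T)\}$, and collect into $\varphi_t$ the remaining inhomogeneous drift terms, namely $(f_x(t))^\intercal X_2(t)$, the two $1_{E_\varepsilon}(t)$ terms $\sum_{i=1}^d(\hat\sigma^i(t))^\intercal q_t^i$ and $\hat f(t,\Delta 1_{E_\varepsilon})$, and the Hessian term $\frac12 X_1^\intercal(t)(I_n,p_t,\Upsilon_t)D^2f(t)(I_n,p_t,\Upsilon_t)^\intercal X_1(t)$.

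First I would verify the two structural hypotheses of Proposition \ref{est-linear-prop}. Boundedness of $\lambda=f_y$ is immediate from Assumption \ref{assum-2}(iii); and since $|f_z(t)|\le L_3+\gamma|\bar Z_t|$ with $\bar Z\cdot W\in BMO$ by Theorem \ref{state-eq-exist-th}, we obtain $\mu\cdot W=f_z\cdot W\in BMO$. The substance of the proof, and the main obstacle, is then the data condition $(\xi,\varphi)\in\bigcap_{\beta>1}(L^\beta_{\mathcal{F}_T}(\Omega;\mathbb R)\times\mathcal{M}^{1,\beta}_{\mathcal{F}}([0,T];\mathbb R))$.

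For $\xi$ this is routine: boundedness of $\Phi_x$ and $X_2\in\mathcal{S}^\beta$ for all $\beta$ (estimate (\ref{est-forward-expansion-4})) handle the first term, while boundedness of $\Phi_{xx}$ and $X_1\in\mathcal{S}^\beta$ for all $\beta$ (estimate (\ref{est-forward-expansion-2})) handle the second. For $\varphi$, the term $(f_x)^\intercal X_2$ is controlled by boundedness of $f_x$ together with $X_2\in\mathcal{S}^\beta$. The delicate terms are those containing the adjoint process $q$, which by Lemma \ref{adj-1st-lem} lies only in $\bigcap_{\beta>1}\mathcal{M}^{2,\beta}$ and is not essentially bounded. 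Expanding the Hessian term and using that $D^2f$ is bounded (Assumption \ref{assum-2}(iii)) and $p\in L^\infty_{\mathcal{F}}$ (Lemma \ref{adj-1st-lem}), its worst contribution is of the type $|X_1(t)|^2|q_t|^2$ through the block $\Upsilon_t$; I would bound $\mathbb{E}[(\int_0^T|X_1(t)|^2|q_t|^2dt)^{\beta/2}]$ by dominating $(\int_0^T|X_1|^2|q|^2dt)^{\beta/2}$ by $\sup_t|X_1(t)|^\beta(\int_0^T|q_t|^2dt)^{\beta/2}$ and then applying H\"older's inequality against conjugate exponents, using $X_1\in\mathcal{S}^{\beta'}$ and $q\in\mathcal{M}^{2,\beta'}$ for all $\beta'>1$ --- precisely the estimate (\ref{est-1st-expan-qX1}) already carried out in Lemma \ref{est-one-order}. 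The two $1_{E_\varepsilon}$ terms are handled the same way: the factor $\hat\sigma^i$ is not bounded but lies in $L^\beta$ for all $\beta$ (since $|\sigma(t,0,u)|\le L_1(1+|u|)$ and admissible controls satisfy (\ref{control-integrable})), and $\hat f(t,\Delta 1_{E_\varepsilon})$ is controlled by the quadratic-in-$z$ growth of $f$ together with $p\in L^\infty_{\mathcal{F}}$, so that both are again absorbed by H\"older's inequality against $q$. Once these integrability checks are in place, Proposition \ref{est-linear-prop} delivers, for each $\beta>1$, a unique solution $(Y_2(\cdot),Z_2(\cdot))\in\mathcal{S}^\beta_{\mathcal{F}}([0,T];\mathbb R)\times\mathcal{M}^{2,\beta}_{\mathcal{F}}([0,T];\mathbb R^d)$; by uniqueness these agree across $\beta$, yielding the claimed solution in $\bigcap_{\beta>1}(\mathcal{S}^\beta\times\mathcal{M}^{2,\beta})$.
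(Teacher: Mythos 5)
Your proposal is correct and takes essentially the same route as the paper: the paper disposes of Lemma \ref{exist-y2-lem} by declaring it ``similar to Lemma \ref{exist-y1-lem}'', i.e.\ a direct application of Proposition \ref{est-linear-prop} with $\lambda=f_{y}$, $\mu=f_{z}$ (so that $\mu\cdot W\in BMO$ via $|f_{z}|\leq L_{3}+\gamma|\bar{Z}|$), and the terminal and inhomogeneous terms treated as data. Your explicit verification that $(\xi,\varphi)\in\bigcap_{\beta>1}\left(L_{\mathcal{F}_{T}}^{\beta}(\Omega;\mathbb{R})\times\mathcal{M}_{\mathcal{F}}^{1,\beta}([0,T];\mathbb{R})\right)$ --- in particular the H\"{o}lder argument absorbing the $|X_{1}|^{2}|q|^{2}$ contribution and the moment bounds on $\hat{\sigma}^{i}$ and $\hat{f}(t,\Delta 1_{E_{\varepsilon}})$ --- is exactly the bookkeeping the paper leaves implicit.
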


In the following lemma, we estimate the orders of $Y_{2}(0)$ and
$Y^{\varepsilon}(0)-\bar{Y}(0)-Y_{1}(0)-Y_{2}(0)$.

\begin{lemma}
\label{est-second-order} Suppose that Assumption \ref{assum-2} holds. Then we
have%
\[
Y_{2}(0)=O(\varepsilon);\text{ \ }Y_{0}^{\varepsilon}-\bar{Y}_{0}%
-Y_{1}(0)-Y_{2}(0)=o(\varepsilon).
\]

\end{lemma}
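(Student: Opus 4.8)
The plan is to treat both assertions by applying the weighted estimate of Proposition \ref{est-exp-yz-prop} to linear BSDEs whose free and terminal data are already known to be small, rather than re-expanding everything from scratch. Throughout I write $\Gamma_t=\mathcal{E}(\int_0^t(f_z(s))^\intercal dW_s)$ as in Lemma \ref{est-one-order}, fix $\beta\in(1\vee 2p_{f_z}^{-1},2)$ and $\beta_0\in(\beta,2)$, and use that $\Gamma_0=1$ and $\mathcal{F}_0$ is trivial, so that for any continuous process $\Xi$ one has the pathwise bound $|\Xi_0|^\beta=\Gamma_0|\Xi_0|^\beta\le\sup_{t}\Gamma_t|\Xi_t|^\beta$, hence $|\Xi_0|^\beta\le\mathbb{E}[\sup_{t}\Gamma_t|\Xi_t|^\beta]$. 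This reduces both statements to controlling the right-hand side of (\ref{est-exp-yz}).

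For the first assertion I would read (\ref{new-form-y2}) as a one-dimensional linear BSDE of the form (\ref{linear-bsde}) with bounded drift coefficient $\lambda=f_y$, $BMO$ coefficient $\mu=f_z$, terminal value $\xi=Y_2(T)$, and free term $\varphi$ consisting of $(f_x)^\intercal X_2$, the $E_\varepsilon$-supported bracket, and the Hessian term $\tfrac12 X_1^\intercal(I_n,p,\Upsilon)D^2f(I_n,p,\Upsilon)^\intercal X_1$. Applying Proposition \ref{est-exp-yz-prop} bounds $|Y_2(0)|^\beta$ by the weighted data norm, and it remains to check that each piece is $O(\varepsilon^\beta)$. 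The terminal value satisfies $|Y_2(T)|\le C(|X_2(T)|+|X_1(T)|^2)$, which is of order $\varepsilon$ in every $L^p$ by (\ref{est-forward-expansion-4}) and (\ref{est-forward-expansion-2}); together with the reverse H\"older inequality for $\Gamma$ this gives the right order, and $(f_x)^\intercal X_2$ is handled identically via (\ref{est-forward-expansion-4}). The genuinely unbounded contributions, namely the $\Upsilon$ (hence $q$) factor inside the Hessian term and the $\bar Z$ factor inside the $E_\varepsilon$-term, are exactly those already dealt with in Lemma \ref{est-one-order}: the Hessian term is controlled by H\"older's inequality, reverse H\"older for $\Gamma$, the order-$\varepsilon$ bound on $\sup_t|X_1(t)|^2$, and the finiteness of $\mathbb{E}[(\int_0^T|q_t|^2dt)^p]$ from Lemma \ref{adj-1st-lem}; the $E_\varepsilon$-term is controlled by the weighted bound $\mathbb{E}[\int_{E_\varepsilon}\Gamma_t^{2/\beta}|\bar Z_t|^2dt]\le C\varepsilon$ of (\ref{est-exp-fzeps-fz-zeta1}) together with the control integrability (\ref{control-integrable}). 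This yields $Y_2(0)=O(\varepsilon)$.

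For the second assertion I would set $\eta^{3,\varepsilon}:=\eta^{2,\varepsilon}-Y_2$ and $\zeta^{3,\varepsilon}:=\zeta^{2,\varepsilon}-Z_2$, and subtract (\ref{new-form-y2}) from (\ref{eta-eps-1}). Since both equations carry the same linear part $f_y(\cdot)+(f_z(\cdot))^\intercal(\cdot)$, the pair $(\eta^{3,\varepsilon},\zeta^{3,\varepsilon})$ solves a linear BSDE of type (\ref{linear-bsde}) whose free term is the difference of $A_1^\varepsilon$ and the free term of (\ref{new-form-y2}), plus $(f_x)^\intercal(\xi^{2,\varepsilon}-X_2)=(f_x)^\intercal(X_t^\varepsilon-\bar X_t-X_1(t)-X_2(t))$, and whose terminal value is the second-order Taylor remainder of $\Phi$. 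Proposition \ref{est-exp-yz-prop} then reduces $\eta^{3,\varepsilon}(0)=o(\varepsilon)$ to showing that each datum is $o(\varepsilon)$ in the weighted norm. The $(f_x)^\intercal(X^\varepsilon-\bar X-X_1-X_2)$ term and the $\Phi$-remainder are $o(\varepsilon)$ directly from (\ref{est-forward-expansion-5}), the continuity and boundedness of $\Phi_{xx}$ and $D^2f$, and dominated convergence; the remaining pieces of $A_1^\varepsilon$ are second differences of the coefficients of $f$ (such as $\tilde f_z^\varepsilon-f_z$) multiplied by the first-order processes $\xi^{1,\varepsilon},\eta^{1,\varepsilon},\zeta^{1,\varepsilon}$, whose orders are supplied by Lemmas \ref{est-epsilon-bar}, \ref{est-one-order} and by the weighted bounds (\ref{est-eta2-zeta2}).

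The hard part is this last step: upgrading $O(\varepsilon)$ to $o(\varepsilon)$ for the second-order remainders, where the quadratic growth of $f$ in $z$ forces products such as $|\bar Z_t|\,|\zeta^{1,\varepsilon}(t)|$ and $|\zeta^{1,\varepsilon}(t)|^2$ on $E_\varepsilon$. These cannot be treated by the classical estimate (Proposition \ref{est-linear-prop}), which requires $\beta_0>2$; they must be absorbed through the weight $\Gamma$, using the order-$\varepsilon$ bound (\ref{est-exp-fzeps-fz-zeta1}) on $\int_{E_\varepsilon}\Gamma^{2/\beta}|\bar Z|^2$ and the weighted first-order estimate (\ref{est-eta2-zeta2}). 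The passage from $O(\varepsilon)$ to $o(\varepsilon)$ is then extracted by combining these with the continuity of the second derivatives of $f$ and a dominated-convergence argument as $\varepsilon\downarrow0$, which is precisely where the design of Proposition \ref{est-exp-yz-prop} (permitting $\beta_0<2$) is indispensable.
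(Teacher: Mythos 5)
Your reduction of both claims to Proposition \ref{est-exp-yz-prop} has a genuine gap, and it occurs at exactly the term this lemma is designed to handle. Proposition \ref{est-exp-yz-prop} controls $\sup_t \Gamma_t|Y_t|^{\beta}$ by the $\beta_0$-th moment of the weighted data, so your route for $Y_2(0)=O(\varepsilon)$ requires
$\mathbb{E}\bigl[\bigl(\int_{E_\varepsilon}\Gamma_t^{1/\beta}|\hat f(t,\Delta)|\,dt\bigr)^{\beta_0}\bigr]=O(\varepsilon^{\beta_0})$ with $\beta_0>1$, where $|\hat f(t,\Delta)|\leq C(1+|\bar X_t|^2+|u_t|^2+|\bar u_t|^2+|\bar Z_t|^2)$ is \emph{quadratic} in $\bar Z_t$. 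The tool you cite, (\ref{est-exp-fzeps-fz-zeta1}), is a first-moment bound, $\mathbb{E}[\int_{E_\varepsilon}\Gamma_t^{2/\beta}|\bar Z_t|^2dt]\leq C\varepsilon$. Feeding it through Cauchy--Schwarz, $\Gamma^{1/\beta}|\bar Z|^2\leq(\Gamma^{2/\beta}|\bar Z|^2)^{1/2}|\bar Z|$, and H\"older gives only
$\bigl(\mathbb{E}[\int_{E_\varepsilon}\Gamma_t^{2/\beta}|\bar Z_t|^2dt]\bigr)^{\beta_0/2}\bigl(\mathbb{E}[(\int_{E_\varepsilon}|\bar Z_t|^2dt)^{\beta_0/(2-\beta_0)}]\bigr)^{(2-\beta_0)/2}=O(\varepsilon^{\beta_0/2})$,
i.e. $Y_2(0)=O(\sqrt{\varepsilon})$, not $O(\varepsilon)$. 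A genuine $\beta_0$-moment bound $\mathbb{E}[(\int_{E_\varepsilon}(\cdot)|\bar Z_t|^2dt)^{\beta_0}]=O(\varepsilon^{\beta_0})$ is not available: one only knows $\mathbb{E}[(\int_0^T|\bar Z_t|^2dt)^p]<\infty$, and first-moment $O(\varepsilon)$ bounds do not upgrade to higher moments (Jensen goes the wrong way; a BMO process whose bracket on $E_\varepsilon$ equals $1_{A_\varepsilon}$ with $\mathbb{P}(A_\varepsilon)=\varepsilon$ shows the failure). This is precisely the obstruction stated in the introduction. The mechanism that saves Lemma \ref{est-one-order} — the $E_\varepsilon$-supported datum $\hat f(t)$ there is only \emph{linear} in $\bar Z_t$, so $\beta_0<2$ lets Jensen with exponent $\beta_0/2\leq1$ reduce everything to the first moment — is destroyed by the quadratic term $|\bar Z_t|^2 1_{E_\varepsilon}$ in the second-order datum. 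The same defect recurs in your second step, where $|\bar Z_t||\zeta^{2,\varepsilon}(t)|1_{E_\varepsilon}$ would have to be raised to the power $\beta_0$ inside the data norm and shown to be $o(\varepsilon^{\beta_0})$; the estimates (\ref{est-eta2-zeta2}) and (\ref{est-exp-fzeps-fz-zeta1}) you invoke do not deliver this without a delicate (and unstated) re-balancing of exponents and weights.

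The paper's proof sidesteps the sup-norm estimate entirely for this lemma. Since only the deterministic numbers $Y_2(0)$ and $\eta^{3,\varepsilon}(0)$ are needed, it applies It\^o's formula to $\tilde\Gamma Y_2$ and $\tilde\Gamma\eta^{3,\varepsilon}$, with $\tilde\Gamma_t=e^{\int_0^t f_y(s)ds}\Gamma_t$, and takes expectations to get the exact representations (\ref{y2-explict}) and (\ref{eta3-explicit}), in which every datum enters \emph{linearly}. Consequently only first moments are needed: $\mathbb{E}[\int_{E_\varepsilon}\Gamma_t|\bar Z_t|^2dt]\leq C\varepsilon$ (\ref{gamma-Zbar}) handles the quadratic term for the $O(\varepsilon)$ claim, and $\mathbb{E}[\int_{E_\varepsilon}\tilde\Gamma_t|\bar Z_t||\zeta^{2,\varepsilon}(t)|dt]=o(\varepsilon)$ (\ref{est-gamma-Zbar-zeta2}) — obtained by splitting $\Gamma_t=\Gamma_t^{1/\beta^\ast}\Gamma_t^{1/\beta}$, invoking (\ref{est-eta2-zeta2}) for the $\zeta^{2,\varepsilon}$ factor and dominated convergence for the $E_\varepsilon$-localized $\bar Z$ factor — handles the cross term for the $o(\varepsilon)$ claim. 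So your identification of the difficult terms, of the decomposition $(\eta^{3,\varepsilon},\zeta^{3,\varepsilon})$, and of the indispensability of Proposition \ref{est-exp-yz-prop} is correct, but that proposition's place is upstream, in proving (\ref{est-eta2-zeta2}); for the present lemma the missing idea is the first-moment representation formula, without which your argument loses half a power of $\varepsilon$.
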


\begin{proof}
For simplicity, we only prove the case $d=1$, and the proof for the case $d>1$
is similar. Set $\tilde{\Gamma}_{t}=\exp\left\{  \int_{0}^{t}f_{y}%
(s)ds\right\}  \Gamma_{t}$ for $t\in\lbrack0,T]$, where $\Gamma$ is defined in
Lemma \ref{est-one-order}. Note that $\tilde{\Gamma}$ satisfies the following SDE:
\begin{equation}\label{tilde-Gamma-SDE}
\tilde{\Gamma}_{t}=1+\int_{0}^{t}f_{y}(s)\tilde{\Gamma}_{s}ds+\sum_{i=1}%
^{d}\int_{0}^{t}f_{z_{i}}(s)\tilde{\Gamma}_{s}dW_{s}^{i},\text{ \ }t\in
\lbrack0,T].
\end{equation}
Thus, by applying It\^{o}'s formula to $\tilde{\Gamma}Y_{2}$ on
$[0,T]$ and taking expectation,
we get
\begin{equation}%
\begin{array}
[c]{rl}%
Y_{2}(0)= & \mathbb{E}\left[  \tilde{\Gamma}_{T}Y_{2}(T)+\int_{0}^{T}%
\tilde{\Gamma}_{t}\left\{  \left(  f_{x}(t)\right)  ^{\intercal}%
X_{2}(t)+\left[  \left(  \hat{\sigma}(t)\right)  ^{\intercal}q_{t}+\hat
{f}(t,\Delta1_{E_{\varepsilon}})\right]  1_{E_{\varepsilon}}(t)\right.
\right.  \\
& +\left.  \left.  \frac{1}{2}X_{1}^{\intercal}\left(  t\right)  \left[
I_{n},p_{t},\sigma_{x}^{\intercal}(t)p_{t}+q_{t}\right]  D^{2}f(t)\left[
I_{n},p_{t},\sigma_{x}^{\intercal}(t)p_{t}+q_{t}\right]  ^{\intercal}%
X_{1}\left(  t\right)  \right\}  dt\right]  .
\end{array}
\label{y2-explict}%
\end{equation}
We only estimate the most important and difficult terms in (\ref{y2-explict})
as follows.

Note that $\hat{f}(t,\Delta1_{E_{\varepsilon}})=\hat{f}(t,\Delta)1_{E_{\varepsilon}}(t)$. Due to Theorem \ref{state-eq-exist-th}, we have $\left\Vert \bar{Y}\right\Vert
_{\infty}<\infty$. Then, by $\left\Vert p \right\Vert_{\infty}<\infty$ and Assumption \ref{assum-2},
we have
\[%
\begin{array}
[c]{ll}%
\left\vert \hat{f}(t,\Delta)\right\vert  & \leq\left(  \left\vert f\left(
t,\bar{X}_{t},\bar{Y}_{t},\bar{Z}_{t}+\Delta(t),u_{t}\right)  -f\left(
t,\bar{X}_{t},\bar{Y}_{t},\bar{Z}_{t},u_{t}\right)  \right\vert +\left\vert
\hat{f}\left(  t\right)  \right\vert \right)  \\
& \leq C\left[  \left(  1+\left\vert \bar{Z}_{t}\right\vert +\left\vert
\Delta(t)\right\vert \right)  \left\vert \Delta(t)\right\vert +\left(
1+\left\vert \bar{Y}_{t}\right\vert +\left\vert \bar{Z}_{t}\right\vert
\right)  \right]  \\
& \leq C\left(  1+\left\vert \bar{X}_{t}\right\vert ^{2}+\left\vert
u_{t}\right\vert ^{2}+\left\vert \bar{u}_{t}\right\vert ^{2}+\left\vert
\bar{Z}_{t}\right\vert ^{2}\right)  .
\end{array}
\]
For any $\beta\in\left(  1,p_{f_{z}}^{{}}\right)  $, set $\beta^{\ast}%
=\beta(\beta-1)^{-1}$. It follows from H\"{o}lder's inequality, Doob's inequality and reverse
H\"{o}lder's inequality that%
\begin{equation}%
\begin{array}
[c]{cl}%
\mathbb{E}\left[  \int_{0}^{T}\Gamma_{t}\left\vert \bar{Z}_{t}\right\vert
^{2}dt\right]   & \leq\left(  \mathbb{E}\left[  \sup\limits_{t\in\lbrack
0,T]}\Gamma_{t}^{\beta}\right]  \right)  ^{\frac{1}{\beta}}\left(
\mathbb{E}\left[  \left(  \int_{0}^{T}\left\vert \bar{Z}_{t}\right\vert
^{2}dt\right)  ^{\beta^{\ast}}\right]  \right)  ^{\frac{1}{\beta^{\ast}}}\\
& \leq C\left(  \mathbb{E}\left[  \Gamma_{T}^{\beta}\right]  \right)
^{\frac{1}{\beta}}\left(  \mathbb{E}\left[  \left(  \int_{0}^{T}\left\vert
\bar{Z}_{t}\right\vert ^{2}dt\right)  ^{\beta^{\ast}}\right]  \right)
^{\frac{1}{\beta^{\ast}}}\\
& <\infty.
\end{array}
\label{gamma-Zbar2-finite}%
\end{equation}
Thus, we have%
\begin{equation}
\mathbb{E}\left[  \int_{E_{\varepsilon}}\Gamma_{t}\left\vert \bar{Z}%
_{t}\right\vert ^{2}dt\right]  \leq C\varepsilon.\label{gamma-Zbar}%
\end{equation}

Similar to (\ref{gamma-Zbar2-finite}), we have $\mathbb{E}\left[  \left(
\int_{0}^{T}\Gamma_{t}\left\vert q_{t}\right\vert ^{2}dt\right)  ^{\beta
}\right]  <\infty$. From the estimate (\ref{est-forward-expansion-2}), we get
\[%
\begin{array}
[c]{l}%
\mathbb{E}\left[  \int_{0}^{T}\tilde{\Gamma}_{t}\left\vert f_{zz}%
(t)\right\vert \left\vert q_{t}^{\intercal}X_{1}(t)\right\vert ^{2}dt\right]
\\
\leq C\mathbb{E}\left[  \sup\limits_{t\in\lbrack0,T]}\left\vert X_{1}%
(t)\right\vert ^{2}\left(  \int_{0}^{T}\Gamma_{t}\left\vert q_{t}\right\vert
^{2}dt\right)  \right]  \\
\leq C\left(  \mathbb{E}\left[  \left(  \int_{0}^{T}\Gamma_{t}\left\vert
q_{t}\right\vert ^{2}dt\right)  ^{\beta}\right]  \right)  ^{\frac{1}{\beta}%
}\left(  \mathbb{E}\left[  \sup\limits_{t\in\lbrack0,T]}\left\vert
X_{1}(t)\right\vert ^{2\beta^{\ast}}\right]  \right)  ^{\frac{1}{\beta^{\ast}%
}}\\
\leq C\varepsilon.
\end{array}
\]
Therefore, we finally obtain $Y_{2}(0)=O(\varepsilon)$.

Now we focus on the last estimate. We use the notation $\left(
\xi^{2,\varepsilon}(t),\eta^{2,\varepsilon}(t),\zeta^{2,\varepsilon
}(t)\right)  $ in the proof of Lemma \ref{est-one-order}. Let $\Theta(t,\Delta
1_{E_{\varepsilon}}):=(\bar{X}_{t},\bar{Y}_{t},\bar{Z}_{t}+\Delta
(t)1_{E_{\varepsilon}}(t))$, $$\left(
\xi^{3,\varepsilon}(t),\eta^{3,\varepsilon}(t),\zeta^{3,\varepsilon
}(t)\right)  :=\left(  \xi^{2,\varepsilon}(t)-X_{2}(t),\eta^{2,\varepsilon
}(t)-Y_{2}(t),\zeta^{2,\varepsilon}(t)-Z_{2}(t)\right)  $$ and define%
\[
\tilde{\Phi}_{xx}^{\varepsilon}(T)=2\int_{0}^{1}\int_{0}^{1}\theta\Phi
_{xx}(\bar{X}_{T}+\rho\theta(X_{T}^{\varepsilon}-\bar{X}_{T}))d\theta d\rho,
\]
\[
\tilde{D}^{2}f^{\varepsilon}(t)=2\int_{0}^{1}\int_{0}^{1}\theta D^{2}%
f(t,\Theta(t,\Delta1_{E_{\varepsilon}})+\rho\theta(\Theta_{t}^{\varepsilon
}-\Theta(t,\Delta1_{E_{\varepsilon}})),u_{t}^{\varepsilon})d\theta d\rho,
\]%
\[
\tilde{f}_{z}^{\varepsilon}(t,\Delta1_{E_{\varepsilon}})=\int_{0}^{1}%
f_{z}(t,\Theta(t,\Delta1_{E_{\varepsilon}})+\theta(\Theta_{t}^{\varepsilon
}-\Theta(t,\Delta1_{E_{\varepsilon}})),u_{t}^{\varepsilon})d\theta,
\]%
\[
\hat{f}_{z}(t,\Delta1_{E_{\varepsilon}})=f_{z}(t,\bar{X}_{t},\bar{Y}_{t}%
,\bar{Z}_{t}+\Delta(t)1_{E_{\varepsilon}}(t),u_{t}^{\varepsilon})-f_{z}(t).
\]
$\tilde{f}_{x}^{\varepsilon}(t,\Delta1_{E_{\varepsilon}})$, $\tilde{f}%
_{y}^{\varepsilon}(t,\Delta1_{E_{\varepsilon}})$,
$\hat{f}_{x}(t,\Delta1_{E_{\varepsilon}})$ and $\hat{f}_{y}(t,\Delta1_{E_{\varepsilon}})$ are defined similarly. Then,
by tedious calculation, we have%
\begin{equation}
\left\{
\begin{array}
[c]{lll}%
d\eta^{3,\varepsilon}(t) & = & -\left[  \left(  f_{x}(t)\right)  ^{\intercal
}\xi^{3,\varepsilon}(t)+f_{y}(t)\eta^{3,\varepsilon}(t)+f_{z}(t)\zeta
^{3,\varepsilon}(t)+A_{2}^{\varepsilon}(t)\right]  dt\\
&  & +\zeta^{3,\varepsilon}(t)dW_{t},\\
\eta^{3,\varepsilon}(T) & = & \left(  \Phi_{x}(\bar{X}_{T})\right)
^{\intercal}\xi^{3,\varepsilon}(T)+B_{2}^{\varepsilon}(T),
\end{array}
\right.  \label{eta-eps-3}%
\end{equation}
where%
\[%
\begin{array}
[c]{rl}%
A_{2}^{\varepsilon}(t)= & \left(  \hat{f}_{x}(t,\Delta1_{E_{\varepsilon}%
})\right)  ^{\intercal}\xi^{1,\varepsilon}(t)+\hat{f}_{y}(t,\Delta
1_{E_{\varepsilon}})\eta^{1,\varepsilon}(t)+\hat{f}_{z}(t,\Delta
1_{E_{\varepsilon}})(\zeta^{1,\varepsilon}(t)-\Delta(t)1_{E_{\varepsilon}%
}(t))\\
& -\dfrac{1}{2}\left(  X_{1}^{\intercal}\left(  t\right)  ,Y_{1}%
(t),Z_{1}(t)-\Delta(t)1_{E_{\varepsilon}}(t)\right)  D_{{}}^{2}f(t)\left(
X_{1}^{\intercal}\left(  t\right)  ,Y_{1}(t),Z_{1}(t)-\Delta
(t)1_{E_{\varepsilon}}(t)\right)  ^{\intercal}\\
& +\dfrac{1}{2}\left[  \left(  \xi^{1,\varepsilon}(t)\right)  ^{\intercal
},\eta^{1,\varepsilon}(t),\zeta^{1,\varepsilon}(t)-\Delta(t)1_{E_{\varepsilon
}}(t)\right]  \tilde{D}^{2}f^{\varepsilon}(t)\left[  \left(  \xi
^{1,\varepsilon}(t)\right)  ^{\intercal},\eta^{1,\varepsilon}(t),\zeta
^{1,\varepsilon}(t)-\Delta(t)1_{E_{\varepsilon}}(t)\right]  ^{\intercal},\\
B_{2}^{\varepsilon}(T)= & \dfrac{1}{2}\mathrm{tr}\left\{  \tilde{\Phi}%
_{xx}^{\varepsilon}(T)\xi^{1,\varepsilon}(T)\left(  \xi^{1,\varepsilon
}(T)\right)  ^{\intercal}\right\}  -\dfrac{1}{2}\mathrm{tr}\left\{  \Phi
_{xx}(\bar{X}_{T})X_{1}(T)\left(  X_{1}(T)\right)  ^{\intercal}\right\}  .
\end{array}
\]
Thus, similar to (\ref{y2-explict}), $\eta^{3,\varepsilon}(0)$ can be expressed as%
\begin{equation}
\eta^{3,\varepsilon}(0)=\mathbb{E}\left[  \tilde{\Gamma}_{T}\eta
^{3,\varepsilon}(T)+\int_{0}^{T}\tilde{\Gamma}_{t}\left[  \left(
f_{x}(t)\right)  ^{\intercal}\xi^{3,\varepsilon}(t)+A_{2}^{\varepsilon
}(t)\right]  dt\right]  .\label{eta3-explicit}%
\end{equation}
We only estimate the most important and difficult terms in
(\ref{eta3-explicit}) as follows.

Recall that the relationship between $\left( Y_{1}(\cdot), Z_{1}(\cdot) \right)$ and $X_{1}(\cdot)$ is obtained in (\ref{decoupled-relation-y1-z1}).
This implies
\[
\zeta^{1,\varepsilon}(t)-\Delta(t)1_{E_{\varepsilon}}(t)=\zeta^{2,\varepsilon
}(t)+\left(  p_{t}^{\intercal}\sigma_{x}(t)+q_{t}^{\intercal}\right)  X_{1}(t).
\]
Note that $\hat{f}_{z}(t,\Delta1_{E_{\varepsilon}})=\hat{f}_{z}(t,\Delta)1_{E_{\varepsilon}}(t)$.
Then, by the definition (\ref{f-fw-hat}) and Assumption (\ref{assum-2}), we have
\[
\left\vert \hat{f}_{z}(t,\Delta1_{E_{\varepsilon}})\right\vert =\left\vert
\hat{f}_{z}(t,\Delta)\right\vert 1_{E_{\varepsilon}}(t)\leq C\left(
1+\left\vert \bar{Z}_{t}\right\vert +\left\vert \bar{X}_{t}\right\vert
+\left\vert \bar{u}_{t}\right\vert +\left\vert u_{t}\right\vert \right)
1_{E_{\varepsilon}}(t)
\]
and%
\[%
\begin{array}
[c]{l}%
\left\vert \tilde{f}_{zz}^{\varepsilon}(t)|\zeta^{1,\varepsilon}%
(t)-\Delta(t)1_{E_{\varepsilon}}(t)|^{2}-f_{zz}(t)\left\vert Z_{1}%
(t)-\Delta(t)1_{E_{\varepsilon}}(t)\right\vert ^{2}\right\vert \\
\leq\left\vert \tilde{f}_{zz}^{\varepsilon}(t)\zeta^{2,\varepsilon}(t)\left[
\zeta^{1,\varepsilon}(t)-\Delta(t)1_{E_{\varepsilon}}(t)+\left(
p_{t}^{\intercal}\sigma_{x}(t)+q_{t}^{\intercal}\right)  X_{1}(t)\right]
\right\vert \\
\ \ +\left\vert \tilde{f}_{zz}^{\varepsilon}(t)-f_{zz}(t)\right\vert
\left\vert \left(  p_{t}^{\intercal}\sigma_{x}(t)+q_{t}^{\intercal}\right)
X_{1}(t)\right\vert ^{2}.
\end{array}
\]

For any $\beta\in\left(  1\vee2p_{f_{z}}^{-1},2\right)  $, set $\beta^{\ast
}=\beta(\beta-1)^{-1}$. It follows from H\"{o}lder's inequality, Doob's inequality, reverse
H\"{o}lder's inequality and the energy inequality for $\bar{Z} \cdot W$ that
\begin{equation}%
\begin{array}
[c]{l}%
\mathbb{E}\left[  \left(  \int_{0}^{T}\left(  \Gamma_{t}\right)  ^{\frac
{2}{\beta^{\ast}}}\left\vert \bar{Z}_{t}\right\vert ^{2}dt\right)
^{\frac{\beta^{\ast}}{2}}\right]  \\
\leq\mathbb{E}\left[  \sup\limits_{t\in\lbrack0,T]}\Gamma_{t}\left(  \int%
_{0}^{T}\left\vert \bar{Z}_{t}\right\vert ^{2}dt\right)  ^{\frac{\beta^{\ast}%
}{2}}\right]  \\
\leq\left(  \mathbb{E}\left[  \sup\limits_{t\in\lbrack0,T]}\left(  \Gamma
_{t}\right)  ^{\frac{1+p_{f_{z}}^{{}}}{2}}\right]  \right)  ^{\frac
{2}{1+p_{f_{z}}^{{}}}}\left(  \mathbb{E}\left[  \left(  \int_{0}^{T}\left\vert
\bar{Z}_{t}\right\vert ^{2}dt\right)  ^{\frac{\beta^{\ast}(p_{f_{z}}^{{}}%
+1)}{2(p_{f_{z}}^{{}}-1)}}\right]  \right)  ^{\frac{p_{f_{z}}^{{}}-1}%
{p_{f_{z}}^{{}}+1}}\\
<\infty.
\end{array}
\label{gamma-Zbar-square-finite}%
\end{equation}
Hence, by the estimate (\ref{est-eta2-zeta2}), we get%
\begin{equation}%
\begin{array}
[c]{l}%
\mathbb{E}\left[  \int_{E_{\varepsilon}}\tilde{\Gamma}_{t}\left\vert \bar
{Z}_{t}\right\vert \left\vert \zeta^{2,\varepsilon}(t)\right\vert dt\right]
\\
\leq C\mathbb{E}\left[  \left(  \int_{E_{\varepsilon}}\left(  \Gamma
_{t}\right)  ^{\frac{2}{\beta^{\ast}}}\left\vert \bar{Z}_{t}\right\vert
^{2}dt\right)  ^{\frac{1}{2}}\left(  \int_{0}^{T}\left(  \Gamma_{t}\right)
^{\frac{2}{\beta}}\left\vert \zeta^{2,\varepsilon}(t)\right\vert
^{2}dt\right)  ^{\frac{1}{2}}\right]  \\
\leq C\left(  \mathbb{E}\left[  \left(  \int_{E_{\varepsilon}}\left(
\Gamma_{t}\right)  ^{\frac{2}{\beta^{\ast}}}\left\vert \bar{Z}_{t}\right\vert
^{2}dt\right)  ^{\frac{\beta^{\ast}}{2}}\right]  \right)  ^{\frac{1}%
{\beta^{\ast}}}\left(  \mathbb{E}\left[  \left(  \int_{0}^{T}\left(
\Gamma_{t}\right)  ^{\frac{2}{\beta}}\left\vert \zeta^{2,\varepsilon
}(t)\right\vert ^{2}dt\right)  ^{\frac{\beta}{2}}\right]  \right)  ^{\frac
{1}{\beta}}\\
\leq C\left(  \mathbb{E}\left[  \left(  \int_{E_{\varepsilon}}\left(
\Gamma_{t}\right)  ^{\frac{2}{\beta^{\ast}}}\left\vert \bar{Z}_{t}\right\vert
^{2}dt\right)  ^{\frac{\beta^{\ast}}{2}}\right]  \right)  ^{\frac{1}%
{\beta^{\ast}}}\varepsilon\\
=o(\varepsilon).
\end{array}
\label{est-gamma-Zbar-zeta2}%
\end{equation}

Set $p^{\prime}=\frac{1+p_{f_{z}}^{{}}}{2}$, $q^{\prime}=p^{\prime}(p^{\prime
}-1)^{-1}$. Since $q(\cdot) \in \bigcap_{\beta
>1}\mathcal{M}_{\mathcal{F}}^{2,\beta}([0,T];\mathbb{R}_{{}}^{n\times d})$,
by reverse H\"{o}lder's inequality,
we can show that $\mathbb{E}\left[  \left(  \int_{0}%
^{T}\left(  \Gamma_{t}\right)  ^{\frac{2}{\beta^{\ast}}}\left\vert
q_{t}\right\vert ^{2}dt\right)  ^{\frac{\beta^{\ast}p^{\prime}}{2}}\right]
<\infty$ like (\ref{gamma-Zbar-square-finite}). Then, we have
\[%
\begin{array}
[c]{l}%
\mathbb{E}\left[  \int_{0}^{T}\tilde{\Gamma}_{t}\left\vert \zeta
^{2,\varepsilon}(t)\right\vert \left\vert q_{t}\right\vert \left\vert
X_{1}(t)\right\vert dt\right]  \\
\leq C\mathbb{E}\left[  \sup\limits_{t\in\lbrack0,T]}\left\vert X_{1}%
(t)\right\vert \left(  \int_{0}^{T}\left(  \Gamma_{t}\right)  ^{\frac{2}%
{\beta^{\ast}}}\left\vert q_{t}\right\vert ^{2}dt\right)  ^{\frac{1}{2}%
}\left(  \int_{0}^{T}\left(  \Gamma_{t}\right)  ^{\frac{2}{\beta}}\left\vert
\zeta^{2,\varepsilon}(t)\right\vert ^{2}dt\right)  ^{\frac{1}{2}}\right]  \\
\leq C\left(  \mathbb{E}\left[  \sup\limits_{t\in\lbrack0,T]}\left\vert
X_{1}(t)\right\vert ^{\beta^{\ast}q^{\prime}}\right]  \right)  ^{\frac
{1}{\beta^{\ast}q^{\prime}}}\left(  \mathbb{E}\left[  \left(  \int_{0}%
^{T}\left(  \Gamma_{t}\right)  ^{\frac{2}{\beta^{\ast}}}\left\vert
q_{t}\right\vert ^{2}dt\right)  ^{\frac{\beta^{\ast}p^{\prime}}{2}}\right]
\right)  ^{\frac{1}{\beta^{\ast}p^{\prime}}}\\
\ \ \times\left(  \mathbb{E}\left[  \left(  \int_{0}^{T}\left(  \Gamma
_{t}\right)  ^{\frac{2}{\beta}}\left\vert \zeta^{2,\varepsilon}(t)\right\vert
^{2}dt\right)  ^{\frac{\beta}{2}}\right]  \right)  ^{\frac{1}{\beta}}\\
\leq C\varepsilon^{\frac{3}{2}}.
\end{array}
\]
Finally, we obtain $Y^{\varepsilon}(0)-\bar{Y}(0)-Y_{1}(0)-Y_{2}(0)=\eta^{3,\varepsilon}(0)=o\left(  \varepsilon \right)$.
The proof is complete.
\end{proof}

\subsection{Maximum principle}

By Lemma \ref{relation-y1z1} and $X_{1}(0)=0$, we have $Y_{1}(0)=0$. Thus, by
Lemma \ref{est-second-order}, we obtain%
\begin{equation}
J(u^{\varepsilon}(\cdot))-J(\bar{u}(\cdot))=Y^{\varepsilon}(0)-\bar
{Y}(0)=Y_{2}(0)+o(\varepsilon).\label{costfunc-Y2-oeps}%
\end{equation}
In order to obtain $Y_{2}(0)$, we introduce the second-order adjoint
equation:
\begin{equation}%
\begin{array}
[c]{rl}%
P_{t}= & \Phi_{xx}(\bar{X}_{T})+%
{\displaystyle\int_{t}^{T}}
\left\{  f_{y}(s)P_{s}+\sum\limits_{i=1}^{d}f_{z_{i}}(s)\left[  \left(
\sigma_{x}^{i}(s)\right)  ^{\intercal}P_{s}+P_{s}^{\intercal}\sigma_{x}%
^{i}(s)\right]  \right.  \\
& +b_{x}^{\intercal}(s)P_{s}+P_{s}^{\intercal}b_{x}(s)+\sum\limits_{i=1}%
^{d}\left(  \sigma_{x}^{i}(s)\right)  ^{\intercal}P_{s}\sigma_{x}^{i}%
(s)+\sum\limits_{i=1}^{d}f_{z_{i}}(s)Q_{s}^{i}\\
& \left.  +\sum\limits_{i=1}^{d}\left[  \left(  \sigma_{x}^{i}(s)\right)
^{\intercal}Q_{s}^{i}+\left(  Q_{s}^{i}\right)  ^{\intercal}\sigma_{x}%
^{i}(s)\right]  +\varphi_{s}\right\}  ds-\sum\limits_{i=1}^{d}%
{\displaystyle\int_{t}^{T}}
Q_{s}^{i}dW_{s}^{i},
\end{array}
\label{eq-P}%
\end{equation}
where
\begin{equation}
\varphi_{t}=\sum\limits_{i=1}^{n}b_{xx}^{i}(t)p_{t}^{i}+\sum\limits_{i=1}%
^{n}\sum\limits_{j=1}^{d}\sigma_{xx}^{ij}(t)\left(  f_{z_{j}}(t)p_{t}%
^{i}+q_{t}^{ij}\right)  +\left(  I_{n},p_{t},\Upsilon_{t}\right)
D^{2}f(t)\left(  I_{n},p_{t},\Upsilon_{t}\right)  ^{\intercal}.
\label{2nd-adj-eq-multi-data}%
\end{equation}
Recall that $\Upsilon$ is defined in (\ref{def-capital-Gamma}), $p(\cdot)$ is bounded and $q(\cdot)\in\bigcap_{\beta>1}\mathcal{M}_{\mathcal{F}}^{2,\beta}
([0,T];\mathbb{R}_{{}}^{n\times d})$ from Lemma \ref{adj-1st-lem}. Then, by Assumption \ref{assum-2} and the energy inequality for $\bar{Z} \cdot W$, we have $\mathbb{E}\left[  \left(  \int_{0}^{T}\left\vert
\varphi_{t}\right\vert dt\right)  ^{\beta}\right]  <\infty$ for all $\beta>1$.
The matrix-valued BSDE (\ref{eq-P}) can be rewritten in the vector-valued
form, which belongs to the class of BSDEs (\ref{multi-linear-BSDE}).
Therefore, we have the following lemma immediately.

\begin{lemma}
\label{adj-2nd-lem} Suppose that Assumption \ref{assum-2} holds. Then the BSDE
(\ref{eq-P}) admits a unique solution $\left(  P(\cdot),Q(\cdot)\right)  \in\bigcap_{\beta>1}\left\{  \mathcal{S}%
_{\mathcal{F}}^{\beta}([0,T];\mathbb{S}_{{}}^{n\times n})\times\left(
\mathcal{M}_{\mathcal{F}}^{2,\beta}([0,T];\mathbb{S}_{{}}^{n\times n})\right)
^{d}\right\}  $.
\end{lemma}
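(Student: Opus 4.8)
The plan is to recognize the matrix-valued BSDE (\ref{eq-P}), after vectorization, as an instance of the class (\ref{multi-linear-BSDE}) and then invoke Proposition \ref{prop-multi-linear-BSDE}. Writing $N=n^{2}$ and identifying $P$ (resp.\ each $Q^{i}$) with its column vector $\mathrm{vec}(P)\in\mathbb{R}^{N}$, every term in the driver of (\ref{eq-P}) that is linear in $P$, namely $f_{y}(s)P_{s}$, $b_{x}^{\intercal}(s)P_{s}+P_{s}^{\intercal}b_{x}(s)$, $\sum_{i}f_{z_{i}}(s)[(\sigma_{x}^{i}(s))^{\intercal}P_{s}+P_{s}^{\intercal}\sigma_{x}^{i}(s)]$ and $\sum_{i}(\sigma_{x}^{i}(s))^{\intercal}P_{s}\sigma_{x}^{i}(s)$, becomes a linear map $A_{s}^{\intercal}\mathrm{vec}(P_{s})$ for a suitable $\mathbb{R}^{N\times N}$-valued process $A$, while the coefficient of each $Q^{i}$ splits as $f_{z_{i}}(s)I_{N}$, coming from the term $\sum_{i}f_{z_{i}}(s)Q_{s}^{i}$, plus the bounded linear map $Q^{i}\mapsto(\sigma_{x}^{i}(s))^{\intercal}Q^{i}+(Q^{i})^{\intercal}\sigma_{x}^{i}(s)$, which I collect into $C_{s}^{i}$. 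Thus (\ref{eq-P}) takes exactly the form (\ref{multi-linear-BSDE}) with $\beta^{i}=f_{z_{i}}$, terminal value $\xi=\mathrm{vec}(\Phi_{xx}(\bar{X}_{T}))$ and free term $f=\mathrm{vec}(\varphi)$ from (\ref{2nd-adj-eq-multi-data}).

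Next I would check the hypotheses of Proposition \ref{prop-multi-reverseHolder} (and hence of Proposition \ref{prop-multi-linear-BSDE}). By Assumption \ref{assum-2} the operator norm of $A_{s}$ is dominated by $C(1+\sum_{i}|f_{z_{i}}(s)|)\le C(1+|\bar{Z}_{s}|)$, and likewise $\sum_{i}|\beta^{i}_{s}|=\sum_{i}|f_{z_{i}}(s)|\le C(1+|\bar{Z}_{s}|)$; since $\bar{Z}\cdot W\in BMO$ by Theorem \ref{state-eq-exist-th}, bounding $\int_{\tau}^{T}(1+|\bar{Z}_{s}|)^{2}ds$ over stopping times $\tau$ shows $(|A|+\sum_{i}|\beta^{i}|)\cdot W^{1}\in BMO$. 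The operators $C^{i}$ are bounded with $\sum_{i}\|C^{i}\|_{\infty}\le C\sum_{i}\|\sigma_{x}^{i}\|_{\infty}<\infty$. Finally $\Phi_{xx}(\bar{X}_{T})$ is bounded by Assumption \ref{assum-2}(i), hence lies in every $L^{p_{0}}_{\mathcal{F}_{T}}$, and $\varphi$ satisfies $\mathbb{E}[(\int_{0}^{T}|\varphi_{t}|dt)^{\beta}]<\infty$ for all $\beta>1$ as already observed, so $(\xi,f)\in L^{p_{0}}_{\mathcal{F}_{T}}\times\mathcal{M}^{1,p_{0}}_{\mathcal{F}}$ for arbitrarily large $p_{0}>\bar{p}^{\ast}$. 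Proposition \ref{prop-multi-linear-BSDE} then yields a unique $(P,Q)\in\mathcal{S}^{p}_{\mathcal{F}}\times\mathcal{M}^{2,p}_{\mathcal{F}}$ for each $p\in(\bar{p}^{\ast},p_{0})$; since the constructed solution does not depend on $p_{0}$, letting $p_{0}\to\infty$ places it in $\bigcap_{\beta>1}(\mathcal{S}^{\beta}_{\mathcal{F}}\times(\mathcal{M}^{2,\beta}_{\mathcal{F}})^{d})$.

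It remains to verify that the solution is $\mathbb{S}^{n\times n}$-valued. For this I would take the transpose of the entire equation (\ref{eq-P}): because $\Phi_{xx}(\bar{X}_{T})$ and $\varphi_{s}$ are symmetric and every bracket in the driver is invariant under $P\mapsto P^{\intercal}$, $Q^{i}\mapsto(Q^{i})^{\intercal}$ combined with transposition, the pair $(P^{\intercal},((Q^{i})^{\intercal}))$ solves the same BSDE. Uniqueness from Proposition \ref{prop-multi-linear-BSDE} then forces $P=P^{\intercal}$ and $Q^{i}=(Q^{i})^{\intercal}$, i.e.\ $(P(\cdot),Q(\cdot))\in\bigcap_{\beta>1}\{\mathcal{S}^{\beta}_{\mathcal{F}}([0,T];\mathbb{S}^{n\times n})\times(\mathcal{M}^{2,\beta}_{\mathcal{F}}([0,T];\mathbb{S}^{n\times n}))^{d}\}$. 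The main obstacle, and the real content of the argument, is the structural step: confirming that after vectorization the only unbounded part of the $Q^{i}$-coefficient is exactly the scalar multiple $f_{z_{i}}I_{N}$ of the identity, so that the equation falls into the special class (\ref{multi-linear-BSDE}) rather than a general linear BSDE with unbounded $Z$-coefficient, together with the $BMO$ verification for $A$ and $\beta^{i}$ via $\bar{Z}\cdot W\in BMO$; this is precisely what rules out the obstruction noted in the introduction and lets Proposition \ref{prop-multi-linear-BSDE} apply.
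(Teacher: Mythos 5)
Your main line of argument is the same as the paper's: the paper's entire proof consists of the remark preceding the lemma, namely that $\mathbb{E}\left[\left(\int_{0}^{T}\left\vert \varphi_{t}\right\vert dt\right)^{\beta}\right]<\infty$ for all $\beta>1$ and that (\ref{eq-P}), rewritten in vector-valued form, belongs to the class (\ref{multi-linear-BSDE}), so that Proposition \ref{prop-multi-linear-BSDE} applies. Your structural verification --- that after vectorization the unbounded part of the $Q^{i}$-coefficient is exactly $f_{z_{i}}I_{N}$, that the maps $Q^{i}\mapsto(\sigma_{x}^{i})^{\intercal}Q^{i}+(Q^{i})^{\intercal}\sigma_{x}^{i}$ yield bounded $C^{i}$, that $|A_{s}|+\sum_{i}|\beta_{s}^{i}|\leq C(1+|\bar{Z}_{s}|)$ so the BMO hypothesis of Proposition \ref{prop-multi-reverseHolder} follows from $\bar{Z}\cdot W\in BMO$ (Theorem \ref{state-eq-exist-th}), and that $(\xi,f)$ is integrable of every order --- is precisely what the paper leaves implicit, and it is correct.

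The one genuine flaw is your symmetry step. It is not true that every bracket in the driver is ``invariant under $P\mapsto P^{\intercal}$, $Q^{i}\mapsto(Q^{i})^{\intercal}$ combined with transposition.'' The brackets $(\sigma_{x}^{i})^{\intercal}P+P^{\intercal}\sigma_{x}^{i}$, $b_{x}^{\intercal}P+P^{\intercal}b_{x}$ and $(\sigma_{x}^{i})^{\intercal}Q^{i}+(Q^{i})^{\intercal}\sigma_{x}^{i}$ are symmetric-matrix-valued for \emph{every} argument (each equals its own transpose), which is a different property: for instance $\left[(\sigma_{x}^{i})^{\intercal}P^{\intercal}+P\sigma_{x}^{i}\right]^{\intercal}=(\sigma_{x}^{i})^{\intercal}P^{\intercal}+P\sigma_{x}^{i}$, and this differs from $(\sigma_{x}^{i})^{\intercal}P+P^{\intercal}\sigma_{x}^{i}$ by $(\sigma_{x}^{i})^{\intercal}\Delta-\Delta\sigma_{x}^{i}$ with $\Delta=P^{\intercal}-P$, which is nonzero in general (take $P$ antisymmetric and $\sigma_{x}^{i}=\mathrm{diag}(1,2)$). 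Hence $(P^{\intercal},((Q^{i})^{\intercal}))$ does \emph{not} solve the same BSDE, and uniqueness cannot be invoked the way you propose. The repair is short: transpose the whole equation and subtract it from the original. The symmetric-valued brackets, $\varphi$ and $\Phi_{xx}(\bar{X}_{T})$ then cancel, and $(\Delta,\Delta_{Q}):=\left(P-P^{\intercal},\left(Q^{i}-(Q^{i})^{\intercal}\right)_{i}\right)$ solves
\begin{equation*}
\Delta_{t}=\int_{t}^{T}\left\{ f_{y}(s)\Delta_{s}+\sum_{i=1}^{d}\left(\sigma_{x}^{i}(s)\right)^{\intercal}\Delta_{s}\sigma_{x}^{i}(s)+\sum_{i=1}^{d}f_{z_{i}}(s)\Delta_{Q,s}^{i}\right\} ds-\sum_{i=1}^{d}\int_{t}^{T}\Delta_{Q,s}^{i}\,dW_{s}^{i},
\end{equation*}
which after vectorization is again of the form (\ref{multi-linear-BSDE}) (now with $C^{i}=0$), with zero terminal value and zero free term; the estimate (\ref{est-multi-BSDE-YZ}) (equivalently, uniqueness in Proposition \ref{prop-multi-linear-BSDE}, since $(0,0)$ is a solution) forces $\Delta\equiv0$ and $\Delta_{Q}\equiv0$. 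Alternatively, since the driver maps symmetric $(P,Q)$ to symmetric values, you can pose the vectorized equation on $\mathbb{S}^{n\times n}\cong\mathbb{R}^{n(n+1)/2}$ from the outset and avoid the issue entirely.
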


Now we deduce the relationship between $(Y_{2}(\cdot),Z_{2}(\cdot))$ and
$X_{2}(\cdot)$. We introduce the following auxiliary equation:
\begin{equation}%
\begin{array}
[c]{cl}%
\hat{Y}_{t}= &
{\displaystyle\int_{t}^{T}}
\left\{  f_{y}(s)\hat{Y}_{s}+\left(  f_{z}(s)\right)  ^{\intercal}\hat{Z}%
_{s}+\left[  p_{s}^{\intercal}\hat{b}(s)+\sum\limits_{i=1}^{d}\left(
q_{s}^{i}\right)  ^{\intercal}\hat{\sigma}^{i}(s)\right]  1_{E_{\varepsilon}%
}(s)\right.  \\
& +\left.  \left[  \hat{f}(s,\Delta)+\dfrac{1}{2}\sum\limits_{i=1}^{d}\left(
\hat{\sigma}^{i}(s)\right)  ^{\intercal}P_{s}\hat{\sigma}^{i}(s)\right]
1_{E_{\varepsilon}}(s)\right\}  ds-%
{\displaystyle\int_{t}^{T}}
\left(  \hat{Z}_{s}\right)  ^{\intercal}dW_{s}.
\end{array}
\label{eq-y-hat}%
\end{equation}

\begin{lemma}
\label{relation-y2z2} Suppose that Assumption \ref{assum-2} holds. Then the BSDE
(\ref{eq-y-hat}) admits a unique solution $\left(  \hat{Y},\hat{Z}\right)
\in\bigcap_{\beta>1}\left(  \mathcal{S}_{\mathcal{F}}^{\beta}([0,T];\mathbb{R}%
)\times\mathcal{M}_{\mathcal{F}}^{2,\beta}([0,T];\mathbb{R}^{d})\right)  $.
Moreover, we have%
\begin{equation}%
\begin{array}
[c]{rl}%
Y_{2}(t) & =\hat{Y}_{t}+p_{t}^{\intercal}X_{2}(t)+\dfrac{1}{2}X_{1}%
^{\intercal}(t)P_{t}X_{1}(t),\\
Z_{2}^{i}(t) & =\hat{Z}_{t}^{i}+\mathbf{\tilde{Z}}^{i}\mathbf{(t)}\text{ \ for
\ }i=1,2,\ldots,d,
\end{array}
\label{relation-y2-z2}%
\end{equation}
where%
\[%
\begin{array}
[c]{cl}%
\mathbf{\tilde{Z}}^{i}\mathbf{(t)}= & \left[  p_{t}^{\intercal}\sigma_{x}%
^{i}(t)+\left(  q_{t}^{i}\right)  ^{\intercal}\right]  X_{2}(t)\\
& +\dfrac{1}{2}X_{1}^{\intercal}(t)\left[  \left(  \sigma_{x}^{i}(t)\right)
^{\intercal}P_{t}+P_{t}^{\intercal}\sigma_{x}^{i}(t)+Q_{t}^{i}+\sum
\limits_{j=1}^{d}\sigma_{xx}^{ij}(t)p_{t}^{j}\right]  X_{1}(t)\\
& +\left\{  \dfrac{1}{2}\left[  \left(  \hat{\sigma}^{i}(t)\right)
^{\intercal}P_{t}+P_{t}^{\intercal}\hat{\sigma}^{i}(t)\right]  +p_{t}%
^{\intercal}\hat{\sigma}_{x}^{i}(t)\right\}  X_{1}(t)1_{E_{\varepsilon}}(t).
\end{array}
\]

\end{lemma}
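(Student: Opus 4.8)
The plan is to handle this in two stages: first establish the well-posedness of (\ref{eq-y-hat}) by recognizing it as a scalar linear BSDE of the type (\ref{linear-bsde}), and then prove the representation (\ref{relation-y2-z2}) by an It\^o computation closed off with the uniqueness from Lemma \ref{exist-y2-lem}.

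For the well-posedness, I would read (\ref{eq-y-hat}) as (\ref{linear-bsde}) with $\lambda=f_{y}(\cdot)$, $\mu=f_{z}(\cdot)$, terminal value $\xi=0$, and inhomogeneous term
\[
\varphi_{s}=\left[p_{s}^{\intercal}\hat{b}(s)+\sum_{i=1}^{d}(q_{s}^{i})^{\intercal}\hat{\sigma}^{i}(s)+\hat{f}(s,\Delta)+\tfrac{1}{2}\sum_{i=1}^{d}(\hat{\sigma}^{i}(s))^{\intercal}P_{s}\hat{\sigma}^{i}(s)\right]1_{E_{\varepsilon}}(s).
\]
By Assumption \ref{assum-2}, $\|f_{y}\|_{\infty}<\infty$ and $f_{z}\cdot W\in BMO$, so it only remains to check $\varphi\in\mathcal{M}_{\mathcal{F}}^{1,\beta}$ for every $\beta>1$. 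Since $p(\cdot)$ is bounded (Lemma \ref{adj-1st-lem}), $q(\cdot)\in\bigcap_{\beta>1}\mathcal{M}_{\mathcal{F}}^{2,\beta}$, and $P(\cdot)\in\mathcal{S}_{\mathcal{F}}^{\beta}$ (Lemma \ref{adj-2nd-lem}), while $\hat{b},\hat{\sigma}^{i}$ grow linearly in $|u|$ and $\hat{f}(\cdot,\Delta)$ obeys the quadratic bound $|\hat{f}(t,\Delta)|\le C(1+|\bar{X}_{t}|^{2}+|u_{t}|^{2}+|\bar{u}_{t}|^{2}+|\bar{Z}_{t}|^{2})$ already derived in the proof of Lemma \ref{est-second-order}, the control integrability (\ref{control-integrable}), the estimate (\ref{est-quad-qfbsde}), and the energy inequality for $\bar{Z}\cdot W$ together yield the required $L^{\beta}$-integrability (all terms being localized on $E_{\varepsilon}$). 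Proposition \ref{est-linear-prop} then gives the unique $(\hat{Y},\hat{Z})$ in the stated space.

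For the representation, set $\Xi_{t}:=\hat{Y}_{t}+p_{t}^{\intercal}X_{2}(t)+\tfrac{1}{2}X_{1}^{\intercal}(t)P_{t}X_{1}(t)$ and apply It\^o's formula to each summand, using the dynamics of $p$ from (\ref{eq-p}), of $P$ from (\ref{eq-P}), of $X_{1}$ from (\ref{new-form-x1}), of $X_{2}$ from (\ref{new-form-x2}), and of $\hat{Y}$ from (\ref{eq-y-hat}). Collecting the $dW^{i}$-coefficients produces exactly $\hat{Z}_{t}^{i}+\mathbf{\tilde{Z}}^{i}(t)$, where the pieces $[p_{t}^{\intercal}\sigma_{x}^{i}+(q_{t}^{i})^{\intercal}]X_{2}$ come from $p^{\intercal}X_{2}$, the terms in $Q_{t}^{i}$, $\sigma_{xx}^{ij}$ and $(\sigma_{x}^{i})^{\intercal}P_{t}+P_{t}^{\intercal}\sigma_{x}^{i}$ from $\tfrac{1}{2}X_{1}^{\intercal}P X_{1}$, and the $1_{E_{\varepsilon}}$-terms from the $\hat{\sigma}^{i},\hat{\sigma}_{x}^{i}$ contributions. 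Collecting the drift, the quadratic term $\tfrac{1}{2}X_{1}^{\intercal}P X_{1}$ generates, through the cross-variations $\sum_{i}(\sigma_{x}^{i}X_{1}+\hat{\sigma}^{i}1_{E_{\varepsilon}})^{\intercal}P_{t}(\sigma_{x}^{i}X_{1}+\hat{\sigma}^{i}1_{E_{\varepsilon}})$ and the generator of $P$ in (\ref{eq-P}), together with the mixed term $d\langle p,X_{2}\rangle$ and the Hessian data $\varphi$ of (\ref{2nd-adj-eq-multi-data}), precisely the curvature term $\tfrac{1}{2}X_{1}^{\intercal}(I_{n},p,\Upsilon)D^{2}f(I_{n},p,\Upsilon)^{\intercal}X_{1}$ plus the linear part $(f_{x})^{\intercal}X_{2}+f_{y}\Xi+(f_{z})^{\intercal}(\hat{Z}+\tilde{Z})$ and the $1_{E_{\varepsilon}}$-driver of (\ref{new-form-y2}). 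Matching terminal values via $\hat{Y}_{T}=0$ and $P_{T}=\Phi_{xx}(\bar{X}_{T})$ gives $\Xi_{T}=(\Phi_{x}(\bar{X}_{T}))^{\intercal}X_{2}(T)+\tfrac{1}{2}\mathrm{tr}\{\Phi_{xx}(\bar{X}_{T})X_{1}(T)X_{1}^{\intercal}(T)\}$, so $(\Xi,\hat{Z}+\tilde{Z})$ solves (\ref{new-form-y2}); uniqueness from Lemma \ref{exist-y2-lem} yields $Y_{2}=\Xi$ and $Z_{2}^{i}=\hat{Z}^{i}+\mathbf{\tilde{Z}}^{i}$.

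The main obstacle is the bookkeeping in the drift computation: one must verify that the second-order adjoint equation (\ref{eq-P}) has been designed so that the $\sigma_{x}^{i}$-cross-variation terms, the generator of $P$, and the mixed term from $p^{\intercal}X_{2}$ assemble into the $f_{y},f_{z}$-linear part plus the full $D^{2}f$ Hessian term of (\ref{new-form-y2}), with all $E_{\varepsilon}$-localized contributions matching $\hat{f}(t,\Delta1_{E_{\varepsilon}})$ and $\sum_{i}(\hat{\sigma}^{i})^{\intercal}q^{i}$. A secondary technical point is to guarantee that the stochastic integrals arising are genuine (not merely local) martingales, so that the passage to the BSDE form is legitimate; this follows by the reverse H\"older and energy inequalities applied to the integrands, exactly as in the localization with the stopping times $\tau_{m}$ used in the proof of Proposition \ref{prop-multi-linear-BSDE}.
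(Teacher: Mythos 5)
Your proposal is correct and is essentially the paper's own proof: well-posedness of (\ref{eq-y-hat}) comes from Proposition \ref{est-linear-prop} once the $E_{\varepsilon}$-localized forcing term is checked to lie in $\mathcal{M}_{\mathcal{F}}^{1,\beta}([0,T];\mathbb{R})$ for all $\beta>1$ (using the boundedness of $p$, $q\in\bigcap_{\beta>1}\mathcal{M}_{\mathcal{F}}^{2,\beta}$, $P\in\bigcap_{\beta>1}\mathcal{S}_{\mathcal{F}}^{\beta}$, the energy inequality for $\bar{Z}\cdot W$ and (\ref{control-integrable})), and the representation (\ref{relation-y2-z2}) follows by applying It\^{o}'s formula to $\hat{Y}_{t}+p_{t}^{\intercal}X_{2}(t)+\frac{1}{2}X_{1}^{\intercal}(t)P_{t}X_{1}(t)$ and invoking uniqueness for (\ref{new-form-y2}), exactly as the paper does. (One harmless slip in your bookkeeping narrative: the $\sum_{j}\sigma_{xx}^{ij}(t)p_{t}^{j}$ and $p_{t}^{\intercal}\hat{\sigma}_{x}^{i}(t)1_{E_{\varepsilon}}$ pieces of $\mathbf{\tilde{Z}}^{i}$ arise from $p^{\intercal}\,dX_{2}$, i.e.\ the diffusion part of (\ref{new-form-x2}), rather than from $\frac{1}{2}X_{1}^{\intercal}P_{t}X_{1}$; this does not affect the validity of the argument.)
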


\begin{proof}
By Proposition \ref{est-linear-prop}, (\ref{eq-y-hat}) has a unique solution $\left(
\hat{Y},\hat{Z}\right)  \in \bigcap_{\beta>1}\left(  \mathcal{S}_{\mathcal{F}%
}^{\beta}([0,T];\mathbb{R})\times\mathcal{M}_{\mathcal{F}}^{2,\beta
}([0,T];\mathbb{R}^{d})\right)  $ since%
\[
\mathbb{E}\left[  \left(  \int_{0}^{T}\left\vert p_{t}^{\intercal}\hat
{b}(t)+\sum\limits_{i=1}^{d}\left(  q_{t}^{i}\right)  ^{\intercal}\hat{\sigma
}^{i}(t)+\hat{f}(t,\Delta)+\dfrac{1}{2}\sum\limits_{i=1}^{d}\left(
\hat{\sigma}^{i}(t)\right)  ^{\intercal}P_{t}\hat{\sigma}^{i}(t)\right\vert
dt\right)  ^{\beta}\right]  <\infty
\]
for all $\beta>1$. Applying It\^{o}'s formula to $p_{t}^{\intercal}%
X_{2}(t)+\frac{1}{2}X_{1}^{\intercal}(t)P_{t}X_{1}(t)+\hat{Y}_{t}$, we obtain
the relationship (\ref{relation-y2-z2}).
\end{proof}

Applying It\^{o}'s formula to $\tilde{\Gamma}\hat{Y}$ and taking expectation, we obtain
\begin{equation}%
\begin{array}
[c]{rl}%
\hat{Y}_{0}= & \mathbb{E}\left[
{\displaystyle\int_{0}^{T}}
\tilde{\Gamma}_{t}\left\{  p_{t}^{\intercal}\hat{b}(t)+\sum\limits_{i=1}%
^{d}\left(  q_{t}^{i}\right)  ^{\intercal}\hat{\sigma}^{i}(t)+\hat{f}%
(t,\Delta)\right.  \right.  \\
& +\left.  \left.  \dfrac{1}{2}\sum\limits_{i=1}^{d}\left(  \hat{\sigma}%
^{i}(t)\right)  ^{\intercal}P_{t}\hat{\sigma}^{i}(t)\right\}
1_{E_{\varepsilon}}(t)dt\right]  ,
\end{array}
\label{Y-hat-0}%
\end{equation}
where $\tilde{\Gamma}$ satisfies the SDE (\ref{tilde-Gamma-SDE}). Combing the relationship
(\ref{costfunc-Y2-oeps}) with (\ref{relation-y2-z2}), we deduce that
\begin{equation}
J(u^{\varepsilon}(\cdot))-J(\bar{u}(\cdot))=\hat{Y}_{0}+o(\varepsilon
)\geq0.\label{costfunc-Yhat-oeps}%
\end{equation}
Define the Hamiltonian $\mathcal{H}:[0,T]\times\Omega\times\mathbb{R}%
^{n}\times\mathbb{R}\times\mathbb{R}^{d}\times U\times\mathbb{R}^{n}\times$
$\mathbb{R}^{n\times d}\times\mathbb{R}^{n\times n}\longmapsto\mathbb{R}$ by%
\begin{equation}%
\begin{array}
[c]{rl}%
\mathcal{H}(t,x,y,z,u,p,q,P)= & p^{\intercal}b(t,x,u)+\sum\limits_{i=1}%
^{d}\left(  q^{i}\right)  ^{\intercal}\sigma^{i}(t,x,u)+f(t,x,y,z+\Delta
(t),u)\\
& +\dfrac{1}{2}\sum\limits_{i=1}^{d}\left(  \sigma^{i}(t,x,u)-\sigma
^{i}(t,\bar{X}_{t},\bar{u}_{t})\right)  ^{\intercal}P\left(  \sigma
^{i}(t,x,u)-\sigma^{i}(t,\bar{X}_{t},\bar{u}_{t})\right)  ,
\end{array}
\label{def-H}%
\end{equation}
where
\[
\Delta(t):=\left(  \left(  \sigma^{1}(t,x,u)-\sigma^{1}(t,\bar{X}_{t},\bar
{u}_{t})\right)  ^{\intercal}p,\ldots,\left(  \sigma^{d}(t,x,u)-\sigma
^{d}(t,\bar{X}_{t},\bar{u}_{t})\right)  ^{\intercal}p\right)  ^{\intercal}.
\]
By (\ref{def-H}), we get
\begin{equation}%
\begin{array}
[c]{l}%
\mathcal{H}(t,\bar{X}_{t},\bar{Y}_{t},\bar{Z}_{t},u_{t},p_{t},q_{t}%
,P_{t})-\mathcal{H}(t,\bar{X}_{t},\bar{Y}_{t},\bar{Z}_{t},\bar{u}_{t}%
,p_{t},q_{t},P_{t})\\
=p_{t}^{\intercal}\hat{b}(t)+\sum\limits_{i=1}^{d}\left(  q_{t}^{i}\right)
^{\intercal}\hat{\sigma}^{i}(t)+\hat{f}(t,\Delta)+\dfrac{1}{2}\sum
\limits_{i=1}^{d}\left(  \hat{\sigma}^{i}(t)\right)  ^{\intercal}P_{t}%
\hat{\sigma}^{i}(t).
\end{array}
\label{Hamiltonian-subtract}%
\end{equation}
It follows from (\ref{Y-hat-0}), (\ref{costfunc-Yhat-oeps}) and
(\ref{Hamiltonian-subtract}) that
\[
J(u^{\varepsilon}(\cdot))-J(\bar{u}(\cdot))=\mathbb{E}\left[  \int_{0}%
^{T}\tilde{\Gamma}_{t}\left\{  \mathcal{H}(t,\bar{X}_{t},\bar{Y}_{t},\bar
{Z}_{t},u_{t},p_{t},q_{t},P_{t})-\mathcal{H}(t,\bar{X}_{t},\bar{Y}_{t},\bar
{Z}_{t},\bar{u}_{t},p_{t},q_{t},P_{t})\right\}  dt\right]  +o(\varepsilon).
\]
Since $\tilde{\Gamma}_{t}>0$ for $t\in\lbrack0,T]$, we obtain the following
maximum principle immediately.

\begin{theorem}
\label{Th-MP} Suppose that Assumption \ref{assum-2} holds. Let $\bar{u}%
(\cdot)\in\mathcal{U}[0,T]$ be optimal and $(\bar{X}(\cdot),\bar{Y}%
(\cdot),\bar{Z}(\cdot))$ be the corresponding state trajectories of
(\ref{state-eq}). Then the following stochastic maximum principle holds:
\begin{equation}
\mathcal{H}(t,\bar{X}_{t},\bar{Y}_{t},\bar{Z}_{t},u,p_{t},q_{t},P_{t}%
)\geq\mathcal{H}(t,\bar{X}_{t},\bar{Y}_{t},\bar{Z}_{t},\bar{u}_{t},p_{t}%
,q_{t},P_{t}),\ \ \ \forall u\in U,\ \text{ }dt \otimes d\mathbb{P}-a.e.,
\label{mp-1}%
\end{equation}
where $\mathcal{H}$ is defined in (\ref{def-H}) and $(p\left(  \cdot\right)  ,q\left(  \cdot\right)  )$, $\left(  P\left(
\cdot\right)  ,Q\left(  \cdot\right)  \right)  $ satisfy (\ref{eq-p}),
(\ref{eq-P}) respectively.
\end{theorem}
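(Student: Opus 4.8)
The plan is to read off (\ref{mp-1}) from the cost expansion already assembled, via the standard spike-variation limiting argument. Combining (\ref{costfunc-Y2-oeps}) with $Y_2(0)=\hat Y_0$ (which follows from (\ref{relation-y2-z2}) together with $X_1(0)=X_2(0)=0$), the optimality of $\bar u(\cdot)$ gives $J(u^{\varepsilon}(\cdot))-J(\bar u(\cdot))=\hat Y_0+o(\varepsilon)\ge 0$ as in (\ref{costfunc-Yhat-oeps}); substituting the expression (\ref{Y-hat-0}) for $\hat Y_0$ and identifying its integrand through (\ref{Hamiltonian-subtract}) and (\ref{def-H}) yields, for every $t_0\in[0,T)$ and every admissible variation,
\[
0 \le \mathbb{E}\left[\int_{E_{\varepsilon}} \tilde{\Gamma}_t\left\{\mathcal{H}(t,\bar X_t,\bar Y_t,\bar Z_t,u_t,p_t,q_t,P_t)-\mathcal{H}(t,\bar X_t,\bar Y_t,\bar Z_t,\bar u_t,p_t,q_t,P_t)\right\}dt\right] + o(\varepsilon),
\]
the integral being supported on $E_\varepsilon=[t_0,t_0+\varepsilon]$ because $u^\varepsilon$ and $\bar u$ coincide off $E_\varepsilon$.

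To localize this I would not use a constant spike but an $\mathbb{F}$-adapted one: for a fixed $v\in U$, set $u_t:=v\,1_{A_t}+\bar u_t\,1_{A_t^{c}}$ with $A_t:=\{\omega:\mathcal{H}(t,\ldots,v,\ldots)<\mathcal{H}(t,\ldots,\bar u_t,\ldots)\}\in\mathcal{F}_t$, which is admissible since it is bounded on $E_\varepsilon$. On $A_t$ the Hamiltonian increment equals $-(\mathcal{H}(t,\ldots,v,\ldots)-\mathcal{H}(t,\ldots,\bar u_t,\ldots))^{-}$, so the displayed inequality becomes $\varepsilon^{-1}\mathbb{E}\int_{t_0}^{t_0+\varepsilon}\tilde\Gamma_t(\mathcal{H}(t,\ldots,v,\ldots)-\mathcal{H}(t,\ldots,\bar u_t,\ldots))^{-}dt\le o(1)$. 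Dividing by $\varepsilon$ and sending $\varepsilon\downarrow 0$ through a Lebesgue point of the (single, $v$-dependent) $L^1([0,T])$ function $t\mapsto\mathbb{E}[\tilde\Gamma_t(\mathcal{H}(t,\ldots,v,\ldots)-\mathcal{H}(t,\ldots,\bar u_t,\ldots))^{-}]$ gives, for a.e.\ $t_0$,
\[
\mathbb{E}\left[\tilde{\Gamma}_{t_0}\bigl(\mathcal{H}(t_0,\ldots,v,\ldots)-\mathcal{H}(t_0,\ldots,\bar u_{t_0},\ldots)\bigr)^{-}\right]=0.
\]
Since $\tilde\Gamma_{t_0}>0$ (it solves (\ref{tilde-Gamma-SDE}) and is a strictly positive density), this forces $\mathcal{H}(t_0,\ldots,v,\ldots)\ge\mathcal{H}(t_0,\ldots,\bar u_{t_0},\ldots)$ $\mathbb{P}$-a.s.; running $v$ over a countable dense subset of $U$, discarding the corresponding countable union of $t_0$-null sets, and invoking the continuity of $f$ (hence of $\mathcal{H}$) in the control from Assumption \ref{assum-2} upgrades this to (\ref{mp-1}) for all $u\in U$, $dt\otimes d\mathbb{P}$-a.e.

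The only genuinely delicate points are the passage to the limit and the removal of the expectation. The Lebesgue differentiation step requires the map $t\mapsto\mathbb{E}[\tilde\Gamma_t(\mathcal{H}(t,\ldots,v,\ldots)-\mathcal{H}(t,\ldots,\bar u_t,\ldots))^{-}]$ to lie in $L^1([0,T])$; this is exactly what the $BMO$ and reverse-H\"older estimates guarantee, since $\tilde\Gamma_t$ times the quadratic-in-$\bar Z$ terms in $\mathcal{H}$ is integrable by computations of the type (\ref{gamma-Zbar2-finite}), which rest on $\|\bar Z\cdot W\|_{BMO_2}<\infty$ from Theorem \ref{state-eq-exist-th}. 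Granting this integrability, the strict positivity of $\tilde\Gamma$ is precisely the mechanism that converts the averaged inequality into the pointwise maximum condition, and the remaining steps are routine.
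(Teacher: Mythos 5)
Your proposal is correct and takes essentially the same route as the paper: both reduce the claim, via (\ref{costfunc-Y2-oeps}), (\ref{relation-y2-z2}), (\ref{Y-hat-0}), (\ref{costfunc-Yhat-oeps}) and (\ref{Hamiltonian-subtract}), to the inequality $\mathbb{E}\left[\int_{E_{\varepsilon}}\tilde{\Gamma}_{t}\left\{\mathcal{H}(t,\bar{X}_{t},\bar{Y}_{t},\bar{Z}_{t},u_{t},p_{t},q_{t},P_{t})-\mathcal{H}(t,\bar{X}_{t},\bar{Y}_{t},\bar{Z}_{t},\bar{u}_{t},p_{t},q_{t},P_{t})\right\}dt\right]+o(\varepsilon)\geq 0$. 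The only difference is that you spell out in full (the adapted spike built from a fixed $v\in U$, Lebesgue differentiation of the $L^{1}$ function, strict positivity of $\tilde{\Gamma}$, and a countable dense subset of $U$ combined with continuity in $u$) the localization step that the paper compresses into the single remark that $\tilde{\Gamma}_{t}>0$ makes the maximum principle ``immediate,'' and your integrability justification via the computations of type (\ref{gamma-Zbar2-finite}) is the correct one.
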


Now we consider a special case that the control domain $U$ is convex, and further assume that:

\begin{assumption}\label{assum-5}
(i) The control domain $U$ is a convex body in $\mathbb{R}^{k}$.

(ii) $b$, $\sigma$ and $f$ are continuous differentiable with respect to $u$; $b_{u}$ and $\sigma_{u}$ are bounded; there exists a constant $L_{4} > 0$ such that
$$\left \vert f_{u}(t,x,y,z,u) \right \vert \leq L_{4} \left ( 1 + \left \vert y \right \vert + \left \vert z \right \vert \right ).$$
\end{assumption}

For $i=1,2,\ldots,d$, set%
\[%
\begin{array}
[c]{cc}%
b_{u}(\cdot)=\left(
\begin{array}
[c]{ccc}%
b_{u_{1}}^{1}(\cdot) & \cdots & b_{u_{k}}^{1}(\cdot)\\
\vdots & \ddots & \vdots\\
b_{u_{1}}^{n}(\cdot) & \cdots & b_{u_{k}}^{n}(\cdot)
\end{array}
\right)  , & \sigma_{u}^{i}(\cdot)=\left(
\begin{array}
[c]{ccc}%
\sigma_{u_{1}}^{1i}(\cdot) & \cdots & \sigma_{u_{k}}^{1i}(\cdot)\\
\vdots & \ddots & \vdots\\
\sigma_{u_{1}}^{ni}(\cdot) & \cdots & \sigma_{u_{k}}^{ni}(\cdot)
\end{array}
\right)  .
\end{array}
\]
We have the following local stochastic maximum principle.

\begin{corollary}\label{coro-local-SMP}
Suppose Assumptions \ref{assum-2} and \ref{assum-5} hold. Let $\bar{u}(\cdot)$ be an optimal control,
$(\bar{X}(\cdot),\bar{Y}(\cdot),\bar{Z}(\cdot))$ be the corresponding state trajectories.
Then the global SMP (\ref{mp-1}) degenerates into the following local version:
\begin{equation}%
\begin{array}
[c]{l}%
\left[  \sum\limits_{i=1}^{d}f_{z_{i}}(t,\bar{X}_{t},\bar{Y}_{t},\bar{Z}%
_{t},\bar{u}_{t})p_{t}^{\intercal}\sigma_{u}^{i}(t,\bar{X}_{t},\bar{u}%
_{t})+f_{u}^{\intercal}(t,\bar{X}_{t},\bar{Y}_{t},\bar{Z}_{t},\bar{u}%
_{t})\right.  \\
+\left.  p_{t}^{\intercal}b_{u}(t,\bar{X}_{t},\bar{u}_{t})+\sum\limits_{i=1}%
^{d}\left(  q_{t}^{i}\right)  ^{\intercal}\sigma_{u}^{i}(t,\bar{X}_{t},\bar
{u}_{t})\right]  (u-\bar{u}_{t})\geq0,\text{ \ }\\
\text{
\ \ \ \ \ \ \ \ \ \ \ \ \ \ \ \ \ \ \ \ \ \ \ \ \ \ \ \ \ \ \ \ \ \ \ \ \ }%
\forall u\in U,\text{ }dt\otimes d\mathbb{P}-a.e.,
\end{array}
\label{local-SMP}%
\end{equation}
where $(p\left(  \cdot\right)  ,q\left(  \cdot\right)  )$, $\left(  P\left(
\cdot\right)  ,Q\left(  \cdot\right)  \right)  $ satisfy (\ref{eq-p}),
(\ref{eq-P}) respectively.
\end{corollary}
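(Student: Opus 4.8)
The plan is to read off the local maximum principle (\ref{local-SMP}) from the global one (\ref{mp-1}) as the first-order necessary condition for minimizing the Hamiltonian over the convex set $U$. By Theorem \ref{Th-MP}, for $dt\otimes d\mathbb{P}$-a.e. $(t,\omega)$ the map $u\mapsto\mathcal{H}(t,\bar X_t,\bar Y_t,\bar Z_t,u,p_t,q_t,P_t)$ attains its minimum over $U$ at $u=\bar u_t$. Since the exceptional null set in (\ref{mp-1}) a priori depends on $u$, the first step is to remove this dependence: choosing a countable dense subset $\{u_m\}_{m\geq1}$ of $U$ (possible as $U\subset\mathbb{R}^k$ is separable) and taking the union of the corresponding null sets, the continuity of $\mathcal{H}$ in $u$ guaranteed by Assumption \ref{assum-5} upgrades (\ref{mp-1}) to the statement that there is a single $dt\otimes d\mathbb{P}$-null set outside which $\mathcal{H}(t,\bar X_t,\bar Y_t,\bar Z_t,\cdot,p_t,q_t,P_t)$ is minimized at $\bar u_t$ simultaneously for all $u\in U$.

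Next I would exploit convexity together with differentiability. Fix $(t,\omega)$ outside the null set and $u\in U$. By convexity of $U$, $u_\rho:=\bar u_t+\rho(u-\bar u_t)\in U$ for every $\rho\in[0,1]$, so $g(\rho):=\mathcal{H}(t,\bar X_t,\bar Y_t,\bar Z_t,u_\rho,p_t,q_t,P_t)$ satisfies $g(\rho)\geq g(0)$. Assumption \ref{assum-5} makes $g$ continuously differentiable on $[0,1]$, whence the right derivative obeys $g'(0)\geq0$, that is $\langle\nabla_u\mathcal{H}(t,\bar X_t,\bar Y_t,\bar Z_t,\bar u_t,p_t,q_t,P_t),\,u-\bar u_t\rangle\geq0$.

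It then remains to compute $\nabla_u\mathcal{H}$ at $\bar u_t$ from the definition (\ref{def-H}). The crucial simplification is that, with $x=\bar X_t$, the increment $\sigma^i(t,\bar X_t,u)-\sigma^i(t,\bar X_t,\bar u_t)$ vanishes at $u=\bar u_t$; hence both $\Delta(t)=0$ there and the quadratic term $\tfrac12\sum_i(\sigma^i-\sigma^i(\bar u))^\intercal P_t(\sigma^i-\sigma^i(\bar u))$, being quadratic in a quantity that vanishes at $\bar u_t$, contributes nothing at first order. Differentiating the terms linear in $b$ and $\sigma$ gives $p_t^\intercal b_u+\sum_i(q_t^i)^\intercal\sigma_u^i$, while differentiating $f(t,\bar X_t,\bar Y_t,\bar Z_t+\Delta(t),u)$ through both its explicit $u$ and the $u$-dependence of $\Delta$ (chain rule, using $\partial_u\Delta^i|_{\bar u_t}=p_t^\intercal\sigma_u^i$ and $\Delta(t)=0$) yields $f_u^\intercal+\sum_i f_{z_i}(t)\,p_t^\intercal\sigma_u^i$, all coefficients evaluated at $(t,\bar X_t,\bar Y_t,\bar Z_t,\bar u_t)$. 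Substituting this gradient into the variational inequality reproduces exactly the bracket in (\ref{local-SMP}).

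The main obstacle I anticipate is not a single hard estimate but the careful bookkeeping in the gradient computation: one must track the chain rule through $\Delta(t)$ and verify that the second-order $P_t$-term drops out at $\bar u_t$. A secondary subtlety is the measurability/separability argument needed to convert the ``for each $u$, a.e.'' form of (\ref{mp-1}) into an ``a.e., for all $u$'' statement before the pointwise first-order condition can legitimately be invoked.
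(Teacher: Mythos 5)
Your proposal is correct, and it is essentially the argument the paper intends: the paper states Corollary \ref{coro-local-SMP} without any proof, and the standard derivation it relies on is exactly what you give — fix a common $dt\otimes d\mathbb{P}$-null set via a countable dense subset of $U$ and continuity in $u$, use convexity of $U$ to get the one-sided derivative condition $g'(0^{+})\geq 0$ along $\bar u_t+\rho(u-\bar u_t)$, and compute $\nabla_u\mathcal{H}$ at $\bar u_t$, where $\Delta(t)=0$ and the quadratic $P_t$-term has vanishing first-order contribution, so that the chain rule through $\Delta$ produces the $\sum_i f_{z_i}(t)\,p_t^{\intercal}\sigma_u^i$ terms in the bracket of (\ref{local-SMP}). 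Your bookkeeping of the gradient is accurate, so the proof is complete.
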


\subsection{An example}
We provide an explicitly solvable problem to illustrate Theorem
\ref{Th-MP}. Let $n=d=k=1$ and the control domain $U=\left\{  0,1\right\}  $.

Consider the following control system:
\[
\left\{
\begin{array}
[c]{rl}%
X_{t}^{u}= & \int_{0}^{t}u_{s}dW_{s},\\
Y_{t}^{u}= & \Phi(X_{T}^{u})+\int_{t}^{T}\left(  g(Z_{s}^{u})+\left\vert
u_{s}\right\vert ^{2}\right)  ds-\int_{t}^{T}Z_{s}^{u}dW_{s},
\end{array}
\right.
\]
where $\Phi$ and $g$ are two deterministic functions defined on $\mathbb{R}%
$\ satisfying

(i) $\Phi(0)=0$; $0\leq\Phi^{\prime}(x)\leq1$ and $\left\vert \Phi
^{\prime\prime}(x)\right\vert <1$ for any $x \in \mathbb{R}$.

(ii) $g(0)=0$, $g(1)>0$ and $g^{\prime}(0)<0$; $\left\vert g(z)\right\vert
\leq\frac{1}{2}$ when $z\in\lbrack0,1]$.

We further suppose that Assumption \ref{assum-2} holds.
For instance, $\Phi(x)=\arctan x$ and $g(z)=\frac{z}%
{2}(2\left\vert z\right\vert -1)$ satisfy the above conditions. Our goal is
to minimize the cost functional $J(u(\cdot)):=Y_{0}^{u}$ over $\mathcal{U}%
[0,T]$. Recall that (\ref{control-integrable}) holds naturally in this case
since $U$ is bounded. We can show that $\bar{u}(\cdot)=0$ is the unique
optimal control, and $(\bar{X}(\cdot),\bar{Y}(\cdot),\bar{Z}(\cdot))=\left(
0,0,0\right)  $ are the corresponding unique optimal state trajectories.

Actually, for any $u(\cdot)\in\mathcal{U}[0,T]$, it follows from It\^{o}'s
formula that%
\begin{equation}%
\begin{array}
[c]{rl}
& Y_{t}^{u}-\Phi(X_{t}^{u})\\
= & \int_{t}^{T}\left\{  g(Z_{s}^{u})+\left[  1+\frac{1}{2}\Phi^{\prime\prime
}(X_{s}^{u})\right]  \left\vert u_{s}\right\vert ^{2}\right\}  ds-\int_{t}%
^{T}(Z_{s}^{u}-\Phi^{\prime}(X_{s}^{u})u_{s})dW_{s}\\
= & \int_{t}^{T}\left\{  \alpha_{s}^{u}(Z_{s}^{u}-\Phi^{\prime}(X_{s}%
^{u})u_{s})+g(\Phi^{\prime}(X_{s}^{u})u_{s})+\left[  1+\frac{1}{2}\Phi
^{\prime\prime}(X_{s}^{u})\right]  \left\vert u_{s}\right\vert ^{2}\right\}
ds\\
& -\int_{t}^{T}(Z_{s}^{u}-\Phi^{\prime}(X_{s}^{u})u_{s})dW_{s},
\end{array}
\label{ex-Y-Phi-dP}%
\end{equation}
where $\alpha_{s}^{u}=\int_{0}^{1}g^{\prime}(\Phi^{\prime}(X_{s}^{u})u_{s}%
+\theta(Z_{s}^{u}-\Phi^{\prime}(X_{s}^{u})u_{s}))d\theta$. For each
$t\in\lbrack0,T]$, by Assumption \ref{assum-2} (iii), the boundedness of $\Phi^{\prime}$
and$\ U$, we have $\left\vert \alpha_{t}^{u}\right\vert \leq C\left(
1+\left\vert Z_{t}^{u}\right\vert \right)  $. Then, by Theorem \ref{est-quad-qfbsde}, we have
$\alpha^{u}\cdot W\in BMO$ since $Z_{{}}^{u}\cdot W\in BMO$. It follows
immediately that $\mathcal{E}\left(  \alpha^{u}\cdot W\right)  $ is a
uniformly integrable martingale. By Girsanov's theorem, we can define a new
probability $\mathbb{P}^{u}$ and a Brownian motion $W^{u}$ under
$\mathbb{P}^{u}$ by%
\[
d\mathbb{P}^{u}=\mathcal{E}\left(  \alpha^{u}\cdot W\right)  d\mathbb{P}%
;\text{ \ }W_{t}^{u}=W_{t}-\int_{0}^{t}\alpha_{s}^{u}ds, \text{ } t \in [0,T].
\]
Denote by $\mathbb{E}^{u}\left[  \cdot\right]  $ the mathematical expectation
corresponding to $\mathbb{P}^{u}$. Thus, (\ref{ex-Y-Phi-dP}) can be rewritten as%
\begin{equation}
Y_{t}^{u}-\Phi(X_{t}^{u})=\int_{t}^{T}\left\{  g(\Phi^{\prime}(X_{s}^{u}%
)u_{s})+\left[  1+\frac{1}{2}\Phi^{\prime\prime}(X_{s}^{u})\right]  \left\vert
u_{s}\right\vert ^{2}\right\}  ds-\int_{t}^{T}(Z_{s}^{u}-\Phi^{\prime}%
(X_{s}^{u})u_{s})dW_{s}^{u}.\label{ex-Y-Phi-dPu}%
\end{equation}
Since $Z_{{}}^{u}\cdot W\in BMO$, $\Phi^{\prime}$ and$\ U$ is bounded, by
reverse H\"{o}lder's inequality for $\mathcal{E}\left(  \alpha^{u}\cdot
W\right)  $ and H\"{o}lder's inequality, we get $\mathbb{E}^{u}\left[
\sqrt{\int_{0}^{T}\left\vert Z_{t}^{u}-\Phi^{\prime}(X_{t}^{u})u_{t}%
\right\vert ^{2}dt}\right]  <\infty$. Thus the stochastic integral in (\ref{ex-Y-Phi-dPu}) is a
true martingale under $\mathbb{P}^{u}$.

Due to the condition (i) and (ii), we get $g(\Phi^{\prime}(x)u)+\left[  1+\frac
{1}{2}\Phi^{\prime\prime}(x)\right]  \left\vert u\right\vert ^{2}\geq0$ for
any $(x,u)\in\mathbb{R}\times U$. In particular,%
\begin{equation}\label{strict-geq-0}
  g(\Phi^{\prime}(x)u)+\left[  1+\frac{1}{2}\Phi^{\prime\prime}(x)\right]
\left\vert u\right\vert ^{2}=0\text{ iff }u=0.
\end{equation}
Note that $\mathbb{P}^{u}$ is equivalent to $\mathbb{P}$. Therefore, if $\left(  \lambda\otimes\mathbb{P}\right)  \left(  \left\{
(t,\omega):u_{t}(\omega)\neq0\right\}  \right)  >0$, by taking
expectation $\mathbb{E}^{u}$ on both sides of (\ref{ex-Y-Phi-dPu}) when $t=0$, then it follows from $\Phi(0)=0$ and (\ref{strict-geq-0}) that
\[
Y_{0}^{u}=\mathbb{E}^{u}\left[
{\displaystyle\int_{0}^{T}}
\left\{  g(\Phi^{\prime}(X_{s}^{u})u_{s})+\left[  1+\frac{1}{2}\Phi
^{\prime\prime}(X_{s}^{u})\right]  \left\vert u_{s}\right\vert ^{2}\right\}
ds\right]  >0,
\]
where $\lambda$ represents the Lebesgue measure on $[0,T]$. This implies $\bar{u}(\cdot)=0$ is the unique optimal control.
In this case, we can calculate the adjoint processes $\left(  p(\cdot
),q(\cdot)\right)  =\left(  1,0\right)  $ and $\left(  P(\cdot),Q(\cdot
)\right)  =\left(  0,0\right)  $. By Theorem \ref{Th-MP}, we obtain the
following global SMP:
\[
g(u)  +\left\vert u\right\vert ^{2}\geq0,\text{ }\forall u\in
U,\text{ \ }dt\otimes d\mathbb{P}-a.e..
\]

\begin{remark}
It is worth pointing out that for a stochastic optimal control problem, if its control domain is replaced with its convex hull, then the global SMP will have a completely different expression.
For example, when we replace the control domain $U=\left\{  0,1\right\}  $ with its convex hull $U^{\prime}=[0,1]$ in the above example, Corollary \ref{coro-local-SMP} indicates that the optimal control $\bar{u} (\cdot)$ should satisfy
\begin{equation}\label{eg-local-SMP}
\left[ g^{\prime}(\bar{Z}_{t})p_{t} + q_{t} + 2\bar{u}_{t}\right] (1-\bar{u}_{t}) \geq 0.
\end{equation}
For this case, if $\bar{u}(\cdot)=0$, then $\left( \bar{Z}(\cdot), p(\cdot), q(\cdot) \right) = \left(0,1,0 \right)$ and (\ref{eg-local-SMP}) becomes $g^{\prime}(0)\geq 0$
which contradicts our assumption $g^{\prime}(0)<0$. Thus, $\bar{u}(\cdot)=0$ can not be an optimal control when the control domain is $U^{\prime}=[0,1]$.
\end{remark}

\section{A sufficient condition of optimality}
In order to provide a sufficient condition of optimality, we need to introduce the
auxiliary Hamiltonian $\tilde{\mathcal{H}}: [0,T]\times\mathbb{R}%
^{n}\times\mathbb{R}\times\mathbb{R}^{d}\times U\times\mathbb{R}^{n}\times
\mathbb{R}^{n\times d}\longmapsto\mathbb{R}$ by
\[
\tilde{\mathcal{H}}(t,x,y,z,u,p,q)=p^{\intercal}b(t,x,u)+\sum\limits_{i=1}%
^{d}\left(  q^{i}\right)  ^{\intercal}\sigma^{i}(t,x,u)+f(t,x,y,z,u).
\]

Now we present the following sufficient condition of optimality. The proof can be found in Section \ref{pf-Th-4-2}.
\begin{theorem}\label{Th-suffi-cond}
Suppose that Assumptions \ref{assum-2} and \ref{assum-5} hold. Let $\bar{u}(\cdot) \in \mathcal{U}[0,T]$,
$(\bar{X}(\cdot),\bar{Y}(\cdot),\bar{Z}(\cdot))$ be the corresponding state trajectories and
$\left( p(\cdot)  ,q(\cdot)  \right)$ be the unique solution to the equation (\ref{eq-p}) corresponding
to $(\bar{X}(\cdot),\bar{Y}(\cdot),\bar{Z}(\cdot),\bar{u}(\cdot))$. We further assume that (\ref{local-SMP}) holds for
$\left( \bar{X}(\cdot),\bar{Y}(\cdot),\bar{Z}(\cdot), \bar{u}(\cdot), p(\cdot) ,q(\cdot) \right)$.
If
\begin{equation}
\Phi(X_{T}^{u})-\Phi(\bar{X}_{T})\geq\left(  \Phi_{x}(\bar{X}_{T})\right)
^{\intercal}(X_{T}^{u}-\bar{X}_{T}),\text{ \ }\mathbb{P}%
-a.s.\label{suffi-Phi-cond}%
\end{equation}
for any $u(\cdot) \in \mathcal{U}[0,T]$
and
\begin{equation}%
\begin{array}
[c]{rl}
& \mathcal{\tilde{H}}(t,x,y,z,p_{t},q_{t},u)-\mathcal{\tilde{H}}(t,\bar
{x},\bar{y},\bar{z},p_{t},q_{t},\bar{u})\\
\geq & \mathcal{\tilde{H}}_{x}^{\intercal}(t,\bar{x},\bar{y},\bar{z}%
,p_{t},q_{t},\bar{u})(x-\bar{x})+\mathcal{\tilde{H}}_{y}(t,\bar{x},\bar
{y},\bar{z},p_{t},q_{t},\bar{u})(y-\bar{y})\\
& +\mathcal{\tilde{H}}_{z}^{\intercal}(t,\bar{x},\bar{y},\bar{z},p_{t}%
,q_{t},\bar{u})(z-\bar{z})+\mathcal{\tilde{H}}_{u}^{\intercal}(t,\bar{x}%
,\bar{y},\bar{z},p_{t},q_{t},\bar{u})(u-\bar{u})\\
& -\sum\limits_{i=1}^{d}\mathcal{\tilde{H}}_{z_{i}}(t,\bar{x},\bar{y},\bar
{z},p_{t},q_{t},\bar{u})p_{t}^{\intercal}\left[  \sigma^{i}(t,x,u)-\sigma
^{i}(t,\bar{x},\bar{u})\right.  \\
& -\left.  \sigma_{x}^{i}(t,\bar{x},\bar{u})(x-\bar{x})-\sigma_{u}^{i}%
(t,\bar{x},\bar{u})(u-\bar{u})\right]  ,\quad dt\otimes d\mathbb{P}-a.e.
\end{array}
\label{suffi-tildeH-cond}%
\end{equation}
for any $(x,y,z,u),(\bar{x},\bar{y},\bar{z},\bar{u}) \in \mathbb{R}^{n}\times\mathbb{R}\times\mathbb{R}^{d}\times U$,
then $\bar{u}(\cdot)$ is an optimal control for the control problem (\ref{obje-eq}).
\end{theorem}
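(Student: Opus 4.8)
The plan is to show directly that $J(u(\cdot))-J(\bar u(\cdot))=Y_0^{u}-\bar Y_0\geq 0$ for every $u(\cdot)\in\mathcal U[0,T]$. Write $\hat X_t=X_t^{u}-\bar X_t$, $\hat Y_t=Y_t^{u}-\bar Y_t$, $\hat Z_t=Z_t^{u}-\bar Z_t$, and abbreviate $\Delta b_t=b(t,X_t^{u},u_t)-b(t,\bar X_t,\bar u_t)$, $\Delta\sigma_t^{i}=\sigma^{i}(t,X_t^{u},u_t)-\sigma^{i}(t,\bar X_t,\bar u_t)$ and $\Delta f_t=f(t,X_t^{u},Y_t^{u},Z_t^{u},u_t)-f(t)$, so that the auxiliary Hamiltonian increment satisfies $\Delta\tilde{\mathcal H}_t:=\tilde{\mathcal H}(t,X_t^{u},Y_t^{u},Z_t^{u},u_t,p_t,q_t)-\tilde{\mathcal H}(t,\bar X_t,\bar Y_t,\bar Z_t,\bar u_t,p_t,q_t)=p_t^{\intercal}\Delta b_t+\sum_{i=1}^{d}(q_t^{i})^{\intercal}\Delta\sigma_t^{i}+\Delta f_t$. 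The weight will be the strictly positive process $\tilde\Gamma$ solving the SDE (\ref{tilde-Gamma-SDE}) with $f_y(\cdot),f_{z_i}(\cdot)$ evaluated along $(\bar X,\bar Y,\bar Z,\bar u)$; since $f_y$ is bounded and $f_z(\cdot)\cdot W\in BMO$, $\tilde\Gamma$ obeys the reverse H\"older inequality and has moments of every order below $p_{f_z}$.

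First I would apply It\^o's formula to $\tilde\Gamma_t\hat Y_t$. Using the backward dynamics $d\hat Y_t=-\Delta f_t\,dt+\hat Z_t^{\intercal}dW_t$ together with (\ref{tilde-Gamma-SDE}), the drift collapses and, after checking that the resulting stochastic integral is a genuine martingale, one obtains
\[
\hat Y_0=\mathbb E\Big[\tilde\Gamma_T\hat Y_T+\int_0^{T}\tilde\Gamma_t\big(\Delta f_t-f_y(t)\hat Y_t-\textstyle\sum_i f_{z_i}(t)\hat Z_t^{i}\big)dt\Big].
\]
Because $\tilde\Gamma_T>0$, the terminal convexity hypothesis (\ref{suffi-Phi-cond}) gives $\tilde\Gamma_T\hat Y_T\geq\tilde\Gamma_T\,p_T^{\intercal}\hat X_T$, so it remains to control $\mathbb E[\tilde\Gamma_T p_T^{\intercal}\hat X_T]$. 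I would obtain this by It\^o's formula for $\tilde\Gamma_t p_t^{\intercal}\hat X_t$, inserting the forward dynamics of $\hat X$, the adjoint equation (\ref{eq-p}) for $(p,q)$, and (\ref{tilde-Gamma-SDE}), keeping track of all cross-variation terms; the expectation is then written as $\mathbb E\int_0^{T}\tilde\Gamma_t(\cdots)dt$.

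Adding the two representations and rewriting everything through $\tilde{\mathcal H}$, I expect the integrand to reduce to $\tilde\Gamma_t I_t$ with
\[
I_t=\Delta\tilde{\mathcal H}_t-\tilde{\mathcal H}_x^{\intercal}(t)\hat X_t-f_y(t)\hat Y_t-\textstyle\sum_i f_{z_i}(t)\hat Z_t^{i}-\textstyle\sum_i f_{z_i}(t)p_t^{\intercal}\sigma_x^{i}(t)\hat X_t+\textstyle\sum_i f_{z_i}(t)p_t^{\intercal}\Delta\sigma_t^{i},
\]
where $\tilde{\mathcal H}_x,\tilde{\mathcal H}_y=f_y,\tilde{\mathcal H}_{z_i}=f_{z_i},\tilde{\mathcal H}_u$ are evaluated at the optimal point. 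Applying the convexity hypothesis (\ref{suffi-tildeH-cond}) with $(x,y,z,u)=(X_t^{u},Y_t^{u},Z_t^{u},u_t)$ and $(\bar x,\bar y,\bar z,\bar u)=(\bar X_t,\bar Y_t,\bar Z_t,\bar u_t)$ lower-bounds $\Delta\tilde{\mathcal H}_t-\tilde{\mathcal H}_x^{\intercal}(t)\hat X_t$ by $f_y(t)\hat Y_t+\sum_i f_{z_i}(t)\hat Z_t^{i}+\tilde{\mathcal H}_u^{\intercal}(t)(u_t-\bar u_t)-\sum_i f_{z_i}(t)p_t^{\intercal}[\Delta\sigma_t^{i}-\sigma_x^{i}(t)\hat X_t-\sigma_u^{i}(t)(u_t-\bar u_t)]$. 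Substituting this bound, the terms carrying $\hat Y_t$, $\hat Z_t^{i}$, $p_t^{\intercal}\Delta\sigma_t^{i}$ and $p_t^{\intercal}\sigma_x^{i}(t)\hat X_t$ cancel identically, leaving $I_t\geq\tilde{\mathcal H}_u^{\intercal}(t)(u_t-\bar u_t)+\sum_i f_{z_i}(t)p_t^{\intercal}\sigma_u^{i}(t)(u_t-\bar u_t)$, which is exactly the left-hand side of (\ref{local-SMP}) paired with $u_t-\bar u_t$. Since $u_t\in U$ pointwise, (\ref{local-SMP}) evaluated at $u=u_t$ shows this lower bound is nonnegative; hence $I_t\geq0$, and $\tilde\Gamma_t>0$ yields $\hat Y_0\geq\mathbb E\int_0^{T}\tilde\Gamma_t I_t\,dt\geq0$, which is the claim.

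The algebraic cancellation is routine, and the main obstacle will be the integrability bookkeeping needed to justify that the two stochastic integrals are true martingales and that all $dt$-integrals converge. This is where I would spend the effort: $p$ is bounded and $q\in\bigcap_{\beta>1}\mathcal M_{\mathcal F}^{2,\beta}$ by Lemma \ref{adj-1st-lem}, $\hat X\in\bigcap_{\beta>1}\mathcal S_{\mathcal F}^{\beta}$ and $\hat Y$ is bounded by Theorem \ref{state-eq-exist-th}, while $\hat Z\in\bigcap_{\beta>1}\mathcal M_{\mathcal F}^{2,\beta}$; the growth bound $|f_z(t)|\leq L_3+\gamma|\bar Z_t|$, the linear growth of $\sigma$ giving $|\Delta\sigma_t^{i}|\leq C(1+|X_t^{u}|+|\bar X_t|+|u_t|+|\bar u_t|)$, and the control integrability (\ref{control-integrable}) place every factor in some $L^{\beta}$. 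Combining these with the reverse H\"older inequality for $\tilde\Gamma$ (to absorb the weight at an exponent just above $1$) and the energy inequality for $\bar Z\cdot W$, exactly as in (\ref{gamma-Zbar2-finite}) and (\ref{gamma-Zbar-square-finite}), yields the required finiteness. A secondary technical point is that (\ref{local-SMP}), stated for every fixed $u\in U$ off a null set, must be applied at the random argument $u=u_t(\omega)$; this is legitimate because the continuity of the coefficients in $u$ and the separability of $U$ allow one to select a single exceptional null set valid simultaneously for all $u\in U$.
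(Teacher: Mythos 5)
Your proof is correct, and it is the paper's duality argument re-organized so as to avoid an explicit change of measure. The paper's own proof sets $\eta(t)=Y_t^u-\bar Y_t-p_t^\intercal(X_t^u-\bar X_t)$ and $\zeta^i(t)=(Z_t^u)^i-\bar Z_t^i-(q_t^i)^\intercal(X_t^u-\bar X_t)-p_t^\intercal[\sigma^i(t,X_t^u,u_t)-\sigma^i(t,\bar X_t,\bar u_t)]$, derives the single linear BSDE (\ref{th-4-2-eq-3}) whose drift is $\tilde{\mathcal H}_y\eta+\sum_i\tilde{\mathcal H}_{z_i}\zeta^i+R_1+R_2$, and then removes the unbounded coefficient $\tilde{\mathcal H}_z=f_z$ by Girsanov's theorem and the bounded coefficient $\tilde{\mathcal H}_y=f_y$ by the discount $\Lambda_t=\exp\{\int_0^t\tilde{\mathcal H}_y\,ds\}$; conditions (\ref{suffi-Phi-cond}), (\ref{suffi-tildeH-cond}) and (\ref{local-SMP}) then give $\eta(T)\geq0$, $R_1\geq0$, $R_2\geq0$. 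Your weight $\tilde\Gamma_t=\exp\{\int_0^t f_y(s)\,ds\}\,\mathcal{E}\left(\int_0^t f_z(s)^\intercal dW_s\right)$ is exactly $\Lambda_t$ times the density process of the paper's measure $\mathbb{Q}$, so $\mathbb{E}_Q[\Lambda_T\,\cdot\,]=\mathbb{E}[\tilde\Gamma_T\,\cdot\,]$ and, by conditioning, the time-$t$ integrands agree as well: the two computations are term-by-term identical, and your integrand $I_t$ is the paper's $R_1(t)+R_2(t)$. What you do differently is (i) keep $\hat Y$ and $p^\intercal\hat X$ separate, doing two It\^o computations and adding them at the level of expectations rather than deriving one BSDE for $\eta$, and (ii) never invoke Girsanov's theorem or the uniform integrability of $\mathcal{E}(f_z\cdot W)$, using only its reverse H\"older and energy inequalities — which is where the integrability bookkeeping lands in either version, so neither route is cheaper on that front. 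I checked your cancellation claim: the $\hat Y_t$, $\hat Z_t^i$, $p_t^\intercal\Delta\sigma_t^i$ and $p_t^\intercal\sigma_x^i(t)\hat X_t$ terms do cancel after inserting (\ref{suffi-tildeH-cond}), leaving precisely the left-hand side of (\ref{local-SMP}) paired with $u_t-\bar u_t$, as you assert. One point where you are actually more careful than the paper: applying (\ref{local-SMP}), stated for each fixed $u\in U$ off a null set, at the random argument $u=u_t(\omega)$ does require the separability-of-$U$/continuity-in-$u$ argument you flag; the paper uses this step silently when asserting $R_2(\cdot)\geq0$.
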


\begin{remark}\label{rmr-suffi-cond}
If $\sigma$ is linear with respect to $\left( x,u \right)$, then (\ref{suffi-tildeH-cond}) is nothing but the convexity condition of $\mathcal{\tilde{H}}$ with respect to $\left( x,y,z,u \right)$ since the last term on the right-hand side of the inequality (\ref{suffi-tildeH-cond}) vanishes in this case.
\end{remark}

\section{Appendix: Proof of Theorem \ref{Th-suffi-cond} \label{pf-Th-4-2}}

\begin{proof}
Denote by
\[%
\begin{array}
[c]{rl}%
\eta(t)= & Y_{t}^{u}-\bar{Y}_{t}-p_{t}^{\intercal}(X_{t}^{u}-\bar{X}_{t});\\
\zeta^{i}(t)= & \left(  Z_{t}^{u}\right)  ^{i}-\bar{Z}_{t}^{i}-\left(
q_{t}^{i}\right)  ^{\intercal}(X_{t}^{u}-\bar{X}_{t})\\
& -p_{t}^{\intercal}\left[  \sigma^{i}(t,X_{t}^{u},u_{t})-\sigma^{i}(t,\bar
{X}_{t},\bar{u}_{t})\right]  \text{ \ for \ }i=1,2,\ldots,d.
\end{array}
\]%
By applying It\^{o}'s formula to $\eta$, we get
\begin{equation}%
\begin{array}
[c]{rl}%
\eta(t)= & \Phi(X_{T}^{u})-\Phi(\bar{X}_{T})-\left(  \Phi_{x}(\bar{X}%
_{T})\right)  ^{\intercal}(X_{T}^{u}-\bar{X}_{T})\\
& +%
{\displaystyle\int_{t}^{T}}
\left\{  \mathcal{\tilde{H}}_{y}(s,\bar{X}_{s},\bar{Y}_{s},\bar{Z}_{s},\bar
{u}_{s},p_{s},q_{s})\eta(s)+\sum\limits_{i=1}^{d}\mathcal{\tilde{H}}_{z_{i}%
}(s,\bar{X}_{s},\bar{Y}_{s},\bar{Z}_{s},\bar{u}_{s},p_{s},q_{s})\zeta
^{i}(s)\right.  \\
& +\left.  R_{1}(s)+R_{2}(s)\right\}  ds-\sum\limits_{i=1}^{d}%
{\displaystyle\int_{t}^{T}}
\zeta^{i}(s)dW_{s}^{i},
\end{array}
\label{th-4-2-eq-3}%
\end{equation}
where%
\[%
\begin{array}
[c]{rl}%
R_{1}(s)= & \mathcal{\tilde{H}}(s,X_{s}^{u},Y_{s}^{u},Z_{s}^{u},u_{s},p_{s}%
,q_{s})-\mathcal{\tilde{H}}(s,\bar{X}_{s},\bar{Y}_{s},\bar{Z}_{s},\bar{u}%
_{s},p_{s},q_{s})\\
& -\mathcal{\tilde{H}}_{x}^{\intercal}(s,\bar{X}_{s},\bar{Y}_{s},\bar{Z}%
_{s},\bar{u}_{s},p_{s},q_{s})(X_{s}^{u}-\bar{X}_{s})-\mathcal{\tilde{H}}%
_{y}(s,\bar{X}_{s},\bar{Y}_{s},\bar{Z}_{s},\bar{u}_{s},p_{s},q_{s})(Y_{s}%
^{u}-\bar{Y}_{s})\\
& -\sum\limits_{i=1}^{d}\mathcal{\tilde{H}}_{z_{i}}(s,\bar{X}_{s},\bar{Y}%
_{s},\bar{Z}_{s},\bar{u}_{s},p_{s},q_{s})\left[  \left(  Z_{s}^{u}\right)
^{i}-\bar{Z}_{s}^{i}\right]  \\
& -\mathcal{\tilde{H}}_{u}^{\intercal}(s,\bar{X}_{s},\bar{Y}_{s},\bar{Z}%
_{s},\bar{u}_{s},p_{s},q_{s})(u_{s}-\bar{u}_{s})\\
& +\sum\limits_{i=1}^{d}\mathcal{\tilde{H}}_{z_{i}}(s,\bar{X}_{s},\bar{Y}%
_{s},\bar{Z}_{s},\bar{u}_{s},p_{s},q_{s})p_{s}^{\intercal}\left[  \sigma
^{i}(s,X_{s}^{u},u_{s})-\sigma^{i}(s,\bar{X}_{s},\bar{u}_{s})\right.  \\
& -\left.  \sigma_{x}^{i}(s,\bar{X}_{s},\bar{u}_{s})(X_{s}^{u}-\bar{X}%
_{s})-\sigma_{u}^{i}(s,\bar{X}_{s},\bar{u}_{s})(u_{s}-\bar{u}_{s})\right]
\end{array}
\]
and
\[%
\begin{array}
[c]{rl}%
R_{2}(s)= & \left[  \sum\limits_{i=1}^{d}\mathcal{\tilde{H}}_{z_{i}}(s,\bar
{X}_{s},\bar{Y}_{s},\bar{Z}_{s},\bar{u}_{s},p_{s},q_{s})p_{s}^{\intercal
}\sigma_{u}^{i}(s,\bar{X}_{s},\bar{u}_{s})+\mathcal{\tilde{H}}_{u}^{\intercal
}(s,\bar{X}_{s},\bar{Y}_{s},\bar{Z}_{s},\bar{u}_{s},p_{s},q_{s})\right]
(u_{s}-\bar{u}_{s})\\
= & \left[  \sum\limits_{i=1}^{d}f_{z_{i}}(s,\bar{X}_{s},\bar{Y}_{s},\bar
{Z}_{s},\bar{u}_{s})p_{s}^{\intercal}\sigma_{u}^{i}(s,\bar{X}_{s},\bar{u}%
_{s})+f_{u}^{\intercal}(s,\bar{X}_{s},\bar{Y}_{s},\bar{Z}_{s},\bar{u}%
_{s})\right.  \\
& +\left.  p_{s}^{\intercal}b_{u}(s,\bar{X}_{s},\bar{u}_{s})+\sum
\limits_{i=1}^{d}\left(  q_{s}^{i}\right)  ^{\intercal}\sigma_{u}^{i}%
(s,\bar{X}_{s},\bar{u}_{s})\right]  (u_{s}-\bar{u}_{s}).
\end{array}
\]
By Assumption 2.2, we get
$\left\vert \mathcal{\tilde{H}}_{z}(t,\bar{X}_{t},\bar{Y}_{t},\bar{Z}%
_{t},p_{t},q_{t},\bar{u}_{t})\right\vert \leq C\left(  1+\left\vert \bar
{Z}_{t}\right\vert \right)  $. As $\bar{Z}\cdot W$ is a BMO-martingale,
$\mathcal{E}\left(  \sum\limits_{i=1}^{d}\int_{0}^{T}%
\mathcal{\tilde{H}}_{z_{i}}(t,\bar{X}_{t},\bar{Y}_{t},\bar{Z}_{t},p_{t}%
,q_{t},\bar{u}_{t})dW_{t}^{i}\right)$ is a uniformly integrable martingale
and satisfies reverse H\"{o}lder's inequality.
Then, by Girsanov's theorem, we can
define a new probability $\mathbb{Q}$ and a Brownian motion $\tilde{W}$ under $\mathbb{Q}$ by%
\[
d\mathbb{Q}:=\mathcal{E}\left(  \sum\limits_{i=1}^{d}\int_{0}^{T}%
\mathcal{\tilde{H}}_{z_{i}}(t,\bar{X}_{t},\bar{Y}_{t},\bar{Z}_{t},p_{t}%
,q_{t},\bar{u}_{t})dW_{t}^{i}\right)  d\mathbb{P};\text{ }\tilde{W}_{t}%
^{i}:=W_{t}^{i}-\int_{0}^{t}f_{z_{i}}^{u,v}(s)ds,\text{ }i=1,2,\ldots,d.
\]
Denote by $\mathbb{E}_{Q}\left[  \cdot\right]  $ the mathematical
expectation corresponding to $\mathbb{Q}$. Thus, (\ref{th-4-2-eq-3}) can be further rewritten as
\[
\begin{array}
[c]{rl}%
\eta(t)= & \Phi(X_{T}^{u})-\Phi(\bar{X}_{T})-\left(  \Phi_{x}(\bar{X}%
_{T})\right)  ^{\intercal}(X_{T}^{u}-\bar{X}_{T})\\
& +%
{\displaystyle\int_{t}^{T}}
\left\{  \mathcal{\tilde{H}}_{y}(s,\bar{X}_{s},\bar{Y}_{s},\bar{Z}_{s},\bar
{u}_{s},p_{s},q_{s})\eta(s)+R_{1}(s)+R_{2}(s)\right\}  ds-\sum\limits_{i=1}%
^{d}%
{\displaystyle\int_{t}^{T}}
\zeta^{i}(s)d\tilde{W}_{s}^{i}.
\end{array}
\]
Define $\Lambda_{t}=\exp\left\{  \int_{0}^{t}\mathcal{\tilde{H}}_{y}(s,\bar{X}%
_{s},\bar{Y}_{s},\bar{Z}_{s},\bar{u}_{s},p_{s},q_{s})ds\right\}  $ for $t\in\lbrack0,T]$.
Then, by applying It\^{o}'s formula to $\Lambda\eta$, we have
\begin{equation}%
\begin{array}
[c]{rl}%
\eta(t)= & \Lambda_{t}^{-1}\Lambda_{T}\left\{  \Phi(X_{T}^{u})-\Phi(\bar
{X}_{T})-\left(  \Phi_{x}(\bar{X}_{T})\right)  ^{\intercal}(X_{T}^{u}-\bar
{X}_{T})\right\}  \\
& +%
{\displaystyle\int_{t}^{T}}
\Lambda_{t}^{-1}\Lambda_{s}\left\{  R_{1}(s)+R_{2}(s)\right\}  ds-\sum
\limits_{i=1}^{d}%
{\displaystyle\int_{t}^{T}}
\Lambda_{t}^{-1}\Lambda_{s}\zeta^{i}(s)d\tilde{W}_{s}^{i}.
\end{array}
\label{th-4-2-eq-4}%
\end{equation}
Note that $\Lambda$ is bounded, and recall that $\frac{ d\mathbb{Q}}{ d\mathbb{P}}$ satisfies reverse H\"{o}lder's inequality, $\left\Vert
p\right\Vert _{\infty}<\infty$, $q(\cdot)\in\bigcap_{\beta>1}\mathcal{M}%
_{\mathcal{F}}^{2,\beta}([0,T];\mathbb{R}_{{}}^{n\times d})$ and $\bar{Z}\cdot
W\in BMO$. By the definition of $\zeta$, Assumption 2.2 (ii), H\"{o}lder's
inequality, reverse H\"{o}lder's inequality and the energy inequality for $\bar{Z}\cdot W$, we can
show that $\mathbb{E}_{Q}\left[  \left(  \int_{0}^{T}\left\vert \zeta
(t)\right\vert ^{2}dt\right)  ^{\frac{1}{2}}\right]  <\infty$. Hence the
stochastic integral in (\ref{th-4-2-eq-4}) is a true martingale under $\mathbb{Q}$.

On the one hand, by virtue of (\ref{suffi-Phi-cond}) and (\ref{suffi-tildeH-cond}), we have
\[
\eta(T)=\Phi(X_{T}^{u})-\Phi(\bar{X}_{T})-\left(  \Phi_{x}(\bar{X}%
_{T})\right)  ^{\intercal}(X_{T}^{u}-\bar{X}_{T})\geq0,\text{ \ }%
\mathbb{Q}-a.s.
\]
and $R_{1}(\cdot) \geq 0, \text{ } dt\otimes d\mathbb{Q}-a.e.$ since $\mathbb{Q}$ is equivalent to $\mathbb{P}$.
On the other hand, by the assumption that (\ref{local-SMP}) holds for
$\left( \bar{X}(\cdot),\bar{Y}(\cdot),\bar{Z}(\cdot), \bar{u}(\cdot), p(\cdot) ,q(\cdot) \right)$, we deduce that $R_{2}(\cdot) \geq 0, \text{ } dt\otimes d\mathbb{Q}-a.e.$.
Then, by taking mathematical expectation $\mathbb{E}_{Q} \left [ \cdot \right ]$
on both sides of (\ref{th-4-2-eq-4}) when $t=0$, we have
\[
J\left(  u(\cdot)\right)  -J\left(  \bar{u}(\cdot)\right)  =\eta
(0)=\mathbb{E}_{Q}\left[  \Lambda_{T}\eta(T)+\int_{0}^{T}\Lambda_{t}\left\{
R_{1}\mathcal{(}t\mathcal{)}+R_{2}\mathcal{(}t\mathcal{)}\right\}  dt\right]
\geq0.
\]
As $u(\cdot) \in \mathcal{U}[0,T]$ is arbitrary, the proof is complete.
\end{proof}

\end{document}